\documentclass{article}
\usepackage{amsmath}
\usepackage{amsfonts}
\usepackage{amsthm}
\usepackage{amssymb}
\usepackage{latexsym}
\usepackage[pdftex]{graphicx}
\usepackage[top=30truemm,bottom=30truemm,left=30truemm,right=30truemm]{geometry}
\theoremstyle{definition}
\newtheorem{dfn}{Definition}[section]
\newtheorem{thm}[dfn]{Theorem}
\newtheorem{lem}[dfn]{Lemma}

\newtheorem{prop}[dfn]{Proposition}
\newtheorem{ex}[dfn]{Example}
\newtheorem{remk}[dfn]{Remark}

\newtheorem{mthmA}{Theorem A\!\!}[]
\newtheorem{mthmB}{Theorem B\!\!}[]
\newtheorem{mthmC}{Theorem C\!\!}[]

\begin{document}
\title{Decorated enhanced Teichm\"{u}ller spaces}
\author{Katsuhiro Miguchi}
\date{}
\maketitle
\begin{abstract}
\noindent
In this paper, we introduce a new variation of the Teichm\"{u}ller space, namely the deformation space of hyperbolic structures on a surface with both enhancement and decoration. 
We construct the parameterization of this deformation space, which is a common generalization of the shear coordinates and the $\lambda$-length coordinates. 
Furthermore, we introduce the lamination space corresponding to this deformation space, and show the compatibility of the shear coordinates and the $\lambda$-length coordinates. 
\end{abstract}

\section{Introduction}
\noindent
The enhanced Teichm\"{u}ller spaces are considered when we transform the triangulated hyperbolic surfaces by shearing along the edges. 
The decorated Teichm\"{u}ller spaces are introduced by R. C. Penner \cite{Penner2}. 
V. V. Fock and A. B. Goncharov improved the theory of these moduli spaces \cite{Fock-Goncharov}. 
There are the canonical correspondence of these Teichm\"{u}ller spaces and the corresponding lamination spaces. 
The aim of this paper is to construct a common generalization of the enhanced and decorated Teichm\"{u}ller spaces, and establish a correspondence between the generalized Teichm\"{u}ller space and a corresponding lamination space. 
\par
Let $S$ be an oriented connected compact surface of genus $g\ge 0$ with $c\ge 0$ boundary components. 
Remove $p\ge 0$ points (puncture vertices) from the interior of $S$, and $s\ge 0$ points (spike vertices) from the boundaries of $S$. 
Throughout we assume that all boundary components have at least one spike vertex and that $4-4g-2p-2c-s<0$. 
Then $S$ admits a hyperbolic structure with crown-shaped boundaries. 
\par
An {\it enhancement} is an assignment of signs to the closed geodesic boundaries (see \S 2.2). 
We define the {\it decoration} as a collection of horocycles centered at the cusps and the spikes (see \S 2.3) and moreover equidistant curves at the closed geodesic boundaries (see \S 3.1). 
Then, we introduce the decorated enhanced Teichm\"{u}ller space $\mathcal{T}^{ax}(S)$ of $S$ as the deformation space of all marked hyperbolic surfaces homeomorphic to $S$ with both enhancement and decoration. 
\par
Let $\Gamma$ be a triangulation obtained by connecting the $p+s$ vertices on $S$. 
Let $V$ denote the set of the vertices, $V^i$ denote the set of the puncture vertices, $E$ denote the set of the edges, and $E^i$ denote the set of the edges which are in the interior of $S$. 
A {\it shear parameter} of an interior edge is the signed length parameter for gluing the ideal triangles along the edge. 
It is well known that the enhanced Teichm\"{u}ller space $\mathcal{T}^{x}(S)$ of $S$ is parameterized by the shear coordinates \cite{Fock-Goncharov} (see Proposition \ref{Prop shear coord} in \S 2.2): 
$$\varphi_x : \mathcal{T}^x(S)\xrightarrow{\mathrm{diffeo. }}\mathbb{R}^{E^i}. $$
In this paper, we introduce decoration parameters for boundary components, and give a parameterization of the decorated enhanced Teichm\"{u}ller space of $S$: 
\begin{mthmA}
(Theorem \ref{Thm shear deco coord} in \S 3.2) Let 
$$\psi_x : \mathcal{T}^{ax}(S)\rightarrow\mathbb{R}^{E^i}\times\mathbb{R}^V$$
be the map giving the shear parameters and the decoration parameters. 
Then, $\psi_x$ is a homeomorphism. 
\end{mthmA}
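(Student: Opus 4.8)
The plan is to exhibit $\psi_x$ as the product of the shear-coordinate homeomorphism on the base with a system of ``decoration-size'' coordinates on the fibers, i.e.\ to identify $\mathcal{T}^{ax}(S)$ with a trivial $\mathbb{R}^V$-bundle over $\mathcal{T}^x(S)$. There is a forgetful map $\pi:\mathcal{T}^{ax}(S)\to\mathcal{T}^x(S)$ discarding the decoration but retaining the marked hyperbolic structure and the enhancement, and a map $h:\mathcal{T}^{ax}(S)\to\mathbb{R}^V$ recording the decoration parameters, so that $\psi_x=(\varphi_x\circ\pi,\,h)$. Since $\varphi_x$ is a diffeomorphism by Proposition~\ref{Prop shear coord}, it suffices to show that $(\pi,h):\mathcal{T}^{ax}(S)\to\mathcal{T}^x(S)\times\mathbb{R}^V$ is a homeomorphism.

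First I would analyze a single fiber. Fix $[\rho]\in\mathcal{T}^x(S)$. A decoration of $\rho$ assigns to each vertex $v\in V$ a horocycle centered at the cusp or spike $v$, together with the equidistant curves at the closed geodesic boundaries, which by the convention of \S3.1 are determined by the horocycles at the spikes of the corresponding crown and the enhancement sign. The set of horocycles centered at a fixed ideal point is an affine line over $\mathbb{R}$: the expand/contract action of $\mathbb{R}$ is free and transitive, and, once a reference horocycle has been fixed, the decoration parameter $h_v$ is exactly the resulting coordinate. Hence $h$ restricts to a bijection $\pi^{-1}([\rho])\xrightarrow{\sim}\mathbb{R}^V$; since every $[\rho]$ admits a decoration (e.g.\ the reference one), $(\pi,h)$ is a bijection. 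To make the trivialization continuous over all of $\mathcal{T}^x(S)$ rather than merely fiberwise, I would pin down the reference horocycle at $v$ using an edge of $\Gamma$ incident to $v$ --- normalizing so that a prescribed horocyclic segment length (or $\lambda$-length) equals $1$ --- which depends continuously on $[\rho]$.

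Continuity of $(\pi,h)$ is part of how the topology on $\mathcal{T}^{ax}(S)$ is defined: both the underlying enhanced structure and each decoration parameter vary continuously with a decorated enhanced structure. For the inverse I would run the construction behind Proposition~\ref{Prop shear coord} with the decoration added: given $(\{x_e\}_{e\in E^i},\{h_v\}_{v\in V})$, develop the triangulated surface into $\mathbb{H}^2$, gluing ideal triangles along interior edges with shears $x_e$ to obtain the point $\varphi_x^{-1}(\{x_e\})$, and then install at each $v$ the unique horocycle whose parameter against the $\Gamma$-reference is $h_v$, with the induced equidistant curves at the closed boundaries. Every step depends continuously on the input, giving a continuous inverse; by the fiberwise bijectivity it is two-sided, so $\psi_x$ is a homeomorphism.

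The step I expect to be the main obstacle is the bookkeeping at the closed geodesic boundaries: one must verify that the equidistant-curve part of the decoration (\S3.1) contributes neither a free parameter nor a constraint beyond the horocycle parameters at the spikes of the crown --- equivalently, that each crown is genuinely determined by the shears around its core curve, the enhancement sign, and the $h_v$ at its spikes --- and that it depends continuously on these data, including as a boundary geodesic degenerates. Away from the closed boundaries this is the classical decorated/$\lambda$-length picture and the verifications are routine; a secondary technical point is choosing the $\Gamma$-reference horocycles so that the trivialization and the inverse construction are continuous (and, if desired, smooth) uniformly in $[\rho]\in\mathcal{T}^x(S)$.
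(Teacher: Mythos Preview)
Your high-level strategy --- show that $(\pi,h):\mathcal{T}^{ax}(S)\to\mathcal{T}^x(S)\times\mathbb{R}^V$ is a homeomorphism by trivializing fiberwise --- is the same as the paper's, which simply observes that for fixed shear parameters the map $r_v\mapsto d_v$ is a diffeomorphism from the allowed range of decoration-curve lengths onto $\mathbb{R}$ for each $v$, and then checks continuity via a triangular Jacobian. The issue is your description of what a decoration is at a closed geodesic boundary, which misreads the setup of \S3.1 and makes the fiber analysis incorrect as stated.

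In this paper every topological boundary component of $\hat S$ carries at least one spike and becomes a \emph{crown}; closed geodesic boundaries do not arise there. Closed geodesic boundaries come from \emph{interior} puncture vertices $v\in V^i$: depending on the hyperbolic structure, such a $v$ is either a cusp or a closed geodesic boundary $\partial_v$ of length $l_v>0$. The decoration at such a $v$ is an equidistant curve from $\partial_v$, and this curve is a \emph{free} choice --- it is not ``determined by the horocycles at the spikes of the corresponding crown,'' because there is no crown and there are no spikes at $\partial_v$. So your anticipated verification that the equidistant-curve part ``contributes neither a free parameter nor a constraint beyond the horocycle parameters at the spikes'' is aimed at the wrong target: it contributes exactly one free parameter per such $v$, and that is where the $\mathbb{R}$-factor for $v\in V^i$ comes from.

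Consequently, to make your argument go through you must (i) recognize that for $v\in V^i$ with $l_v>0$ the set of equidistant curves is parametrized by $r_v\in(l_v,\infty)$, not by a horocycle affine line, and (ii) produce a reparametrization $d_v=d_v(r_v;x)$ sending $(l_v,\infty)$ homeomorphically onto $\mathbb{R}$ that depends continuously on the shear data and matches the horocycle reparametrization as $l_v\to 0$. This is exactly the content of Definition~\ref{Def deco para} and Proposition~\ref{Prop lim of eq dist curve}; without it the fiber over a point of $\mathcal{T}^x(S)$ with $l_v>0$ is not naturally $\mathbb{R}$ in the $v$-coordinate, and the continuity across the locus $\{l_v=0\}$ (where the decoration type changes from equidistant curve to horocycle) is precisely the nontrivial point.
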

\par
A {\it $\lambda$-length} of an edge is the signed length between the horocycles corresponding to its endpoints. 
It is also well known that the decorated Teichm\"{u}ller space $\mathcal{T}^a(S)$ of $S$ is parameterized by the $\lambda$-length coordinates \cite{Fock-Goncharov} (see Proposition \ref{Prop l-length coord} in \S 2.3): 
$$\varphi_a : \mathcal{T}^a(S)\xrightarrow{\mathrm{diffeo. }}\mathbb{R}^{E}. $$
In this paper, we calculate the relation given by a canonical map between the shear parameters and the $\lambda$-lengths: 
$$\psi : \mathbb{R}^{E^i}\times\mathbb{R}^V\rightarrow\mathbb{R}^E\times\mathbb{R}^{V^i}, $$
and analyze the generalized $\lambda$-length coordinates of the decorated enhanced Teichm\"{u}ller space. 
If $S$ has a spike, there are triangulations whose puncture vertices are all 1-valent. 
For such triangulations, we give a parameterization of the decorated enhanced Teichm\"{u}ller space of $S$: 
\begin{mthmB}
(Theorem \ref{Thm l-b-len coord} in \S 4.2) For a triangulation whose puncture vertices are 1-valent, let 
$$\psi_a : \mathcal{T}^{ax}(S)\rightarrow\mathbb{R}^E\times\mathbb{R}^{V^i}$$
be the map giving the $\lambda$-lengths and the sighed boundary lengths. 
Then, $\psi_a$ is a homeomorphism. 
\end{mthmB}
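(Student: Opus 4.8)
The plan is to deduce Theorem~\ref{Thm l-b-len coord} from Theorem~\ref{Thm shear deco coord} together with the explicit description of the canonical map $\psi$ obtained in \S 4.1. By construction of $\psi$, the $\lambda$-lengths and signed boundary lengths of a decorated enhanced surface are exactly what one gets by applying $\psi$ to its shear and decoration parameters, so that $\psi_a=\psi\circ\psi_x$ as maps on $\mathcal{T}^{ax}(S)$. Since $\psi_x$ is a homeomorphism by Theorem~\ref{Thm shear deco coord}, it is enough to show that, when every puncture vertex of $\Gamma$ is $1$-valent, the map $\psi:\mathbb{R}^{E^i}\times\mathbb{R}^V\to\mathbb{R}^E\times\mathbb{R}^{V^i}$ is itself a homeomorphism. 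First I would note that source and target have the same dimension: every boundary component is a crown, so the number of boundary edges equals the number of spike vertices, i.e.\ $|V\setminus V^i|=|E\setminus E^i|$, and hence $|E^i|+|V|=|E|+|V^i|$.

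Next I would write $\psi$ out coordinatewise using the relations of \S 4.1. On an interior edge the shear parameter is the logarithmic cross-ratio of the four $\lambda$-lengths surrounding it; on a boundary edge the $\lambda$-length is fixed by the decoration parameters at its two spike endpoints; the signed boundary length attached to a puncture is assembled from that puncture's decoration parameter together with the $\lambda$-lengths around it; and, conversely, the decoration parameter at a vertex is recovered from the $\lambda$-lengths of the incident edges and the horocyclic (equidistant) data. In logarithmic length coordinates all of these relations become affine, so $\psi$ is, after the standard identifications and translations, essentially a linear map, and the problem reduces to inverting it explicitly.

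The explicit inversion is where the $1$-valence hypothesis enters. Each puncture $v$ sits at the tip of a self-folded triangle of $\Gamma$: it is the unique endpoint of a radius $r_v$ and is enclosed by a loop $\ell_v$ based at a non-puncture vertex. I would order the coordinates so that the data local to the self-folded triangles come last; with respect to this ordering $\psi$ is block-triangular. The outer block is the change of coordinates for the triangulation obtained by cutting off all the self-folded triangles, a surface whose vertices are all spikes, where the passage from $\lambda$-lengths to shears is invertible by the results recalled in \S 2. Each remaining diagonal block is the local system at one puncture $v$, relating $(x_{r_v},x_{\ell_v},\text{decoration at }v)$ to $(\lambda_{r_v},\lambda_{\ell_v},\text{signed boundary length at }v)$: here $x_{r_v}$ is read off directly from $\lambda_{r_v}$, and then the degenerate cross-ratio relation for $\ell_v$ and the radius relation at the tip determine $x_{\ell_v}$ and the decoration at $v$. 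Each block being invertible, $\psi$ is a bijection; and since every relation involved is built from $\exp$, $\log$, and rational functions whose denominators are products of $\lambda$-lengths, hence nonvanishing, both $\psi$ and $\psi^{-1}$ are continuous. Composing with $\psi_x$ then gives the claim.

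I expect the main obstacle to be the self-folded triangles. Around the loop $\ell_v$ the quadrilateral defining the cross-ratio degenerates, two of its sides being identified, so the shear relation for $\ell_v$ and the formula for the decoration at $v$ have to be rederived in this degenerate configuration and then shown to be uniquely solvable for the pair $(x_{\ell_v},\text{decoration at }v)$. This is also precisely the point at which $1$-valence is indispensable: if a puncture had valence $\ge 2$ there would be no loop of $\Gamma$ encircling it, the $\lambda$-length data and the boundary-length datum at that puncture would be coupled in a way the present method does not control, and the block-triangular structure would be lost. Establishing the non-degeneracy of the local systems, and checking that no further relation among the $\lambda$-lengths obstructs surjectivity, is the technical heart of the argument.
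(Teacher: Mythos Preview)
Your reduction $\psi_a=\psi\circ\psi_x$ and the dimension count are exactly the paper's setup, so the framework is right. The execution of the inversion, however, has two problems.

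First, the relations are not affine. By Proposition~\ref{Prop l-len from shear and deco} the $\lambda$-length of a half-edge at a vertex $w$ equals $d_w$ plus a term of the form $\log\bigl(\sum_i\exp(\cdots)\bigr)-\log\min_k\bigl(\sum_i\exp(\cdots)\bigr)$ built from \emph{all} shear parameters around $w$; for a spike vertex of valence $>1$ this is genuinely nonlinear (and only piecewise smooth because of the $\min$). Second, and more seriously, the block-triangular structure you propose is not there. If $v_0$ is the spike vertex at the base of a self-folded triangle, then the cyclic list of edges at $v_0$ contains $\ell_v,r_v,\ell_v$, so the log-sum-exp term at $v_0$ involves $x_{r_v}$ and $x_{\ell_v}$. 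Hence the $\lambda$-length of \emph{any} edge incident to $v_0$, including edges lying entirely in what you call the outer surface, depends on your ``inner'' data. Outer outputs depend on inner inputs, and cutting off the self-folded triangles does not decouple the coordinate change; nor does it reduce the outer block to the map of \S2.3, since the decoration parameter at $v_0$ is defined relative to the full star of $v_0$ in $S$, not in the cut surface.

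The paper inverts $\psi$ along a different axis. It introduces auxiliary ``gap'' quantities $t_k$ (Lemma~\ref{Lem gap from shear}) and shows (Lemma~\ref{Lem shear from l-len and gap}) that every interior shear is an affine combination of the four surrounding $\lambda$-lengths and the four gaps. The $1$-valence hypothesis enters precisely here: each $t_k$ vanishes at a spike and equals the signed boundary length at a $1$-valent puncture, so \emph{all} shear parameters are recovered affinely from $(a_e)_e$ and $(l_v)_v$ (Proposition~\ref{Prop shear from l-b-len}). Only afterwards are the decoration parameters recovered from the shear parameters just found together with the $\lambda$-lengths (Proposition~\ref{Prop deco from shear and l-len}); this second step is nonlinear but explicit and continuous. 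The working triangular structure is therefore ``all shears first, all decorations second,'' not ``outer surface first, self-folded pieces second.''
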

\par
Next, we consider certain lamination spaces corresponding to the shear parameters and the $\lambda$-lengths. 
There are a homeomorphism between the space $T^x(S, \mathbb{R})$ of real $\mathcal{X}$-laminations and $\mathbb{R}^{E^i}$ \cite{Fock-Goncharov} (see Proposition \ref{Prop coord of X-lami} in \S 5.1) 
$$\Phi_x : T^x(S, \mathbb{R})\xrightarrow{\mathrm{homeo. }}\mathbb{R}^{E^i}, $$
and a homeomorphism between the space $T^a(S, \mathbb{R})$ of real $\mathcal{A}$-laminations and $\mathbb{R}^E$ \cite{Fock-Goncharov} (see Proposition \ref{Prop coord of A-lami} in \S 5.1) 
$$\Phi_a : T^a(S, \mathbb{R})\xrightarrow{\mathrm{homeo. }}\mathbb{R}^E. $$
In this paper, we introduce the space $T^{ax}(S, \mathbb{R})$ of real $\mathcal{AX}$-laminations and give its parametrization by the homeomorphism 
$$\Psi_x : T^{ax}(S, \mathbb{R})\xrightarrow{\mathrm{homeo. }}\mathbb{R}^{E^i}\times\mathbb{R}^V. $$
Moreover, we show the compatibility of $\Phi_x$ and $\Phi_a$: 
\begin{mthmC}
(Theorem \ref{Thm compati of para} in \S 5.3) Let
$$\Psi_a : T^{ax}(S, \mathbb{R})\rightarrow\mathbb{R}^E\times\mathbb{R}^{V^i}$$
be the composition of $\Psi_x$ and $\psi$. 
Then the equations 
$$\Psi_x\circ I_x=\iota_x\circ\Phi_x\mathrm{\ and\ }\Psi_a\circ I_a=\iota_a\circ\Phi_a $$
hold, where $\iota_x$ is the inclusion map from $\mathbb{R}^{E^i}$ to $\mathbb{R}^{E^i}\times\mathbb{R}^V$, $\iota_a$ is the inclusion map from $\mathbb{R}^E$ to $\mathbb{R}^E\times\mathbb{R}^{V^i}$, $I_x$ is the inclusion map from $T^x(S, \mathbb{R})$ to $T^{ax}(S, \mathbb{R})$, and $I_a$ is the inclusion map from $T^a(S, \mathbb{R})$ to $T^{ax}(S, \mathbb{R})$ as in Figure \ref{Fig diag3}. 
\end{mthmC}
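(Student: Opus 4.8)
The plan is to verify the two displayed identities by unwinding the definitions of all the maps involved, using the coordinate descriptions of $\Phi_x$ and $\Phi_a$ from Propositions \ref{Prop coord of X-lami} and \ref{Prop coord of A-lami}, the construction of $\Psi_x$ from \S 5.2, and the explicit formula for $\psi$ computed in \S 4.1. Since every map in the statement is continuous (indeed piecewise linear), and since the laminations with rational weights are dense in $T^x(S,\mathbb{R})$, $T^a(S,\mathbb{R})$ and $T^{ax}(S,\mathbb{R})$, it suffices to check both identities on integral laminations, i.e.\ multicurves with multiplicities, where each coordinate is a signed count of intersections with the edges of a fixed triangulation $\Gamma$, resp.\ the elementary turning count at an interior edge. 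As all of these quantities are additive over the connected components of a lamination and are read off locally inside the triangles of $\Gamma$, the verification further reduces to the case of a single leaf inside one triangle, or — for the shear-type and $\lambda$-length-type relations — inside one quadrilateral formed by two triangles sharing an interior edge.

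For $\Psi_x\circ I_x=\iota_x\circ\Phi_x$: the inclusion $I_x$ regards an $\mathcal{X}$-lamination $\ell$ as an $\mathcal{AX}$-lamination carrying no decoration datum, so the $\mathbb{R}^V$-component of $\Psi_x(I_x(\ell))$ vanishes, which is exactly what $\iota_x$ produces on the right-hand side. On the other hand, evaluated on a lamination with no decoration, the $\mathbb{R}^{E^i}$-component of $\Psi_x$ is given at each interior edge by the same local recipe that defines the coordinate $\Phi_x$ in Proposition \ref{Prop coord of X-lami}; hence these components coincide, and the first identity follows.

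For $\Psi_a\circ I_a=\iota_a\circ\Phi_a$, recall that $\Psi_a=\psi\circ\Psi_x$. The first step is to describe the composite $\Psi_x\circ I_a$: the inclusion $I_a$ regards an $\mathcal{A}$-lamination $\ell$ as an $\mathcal{AX}$-lamination carrying no enhancement datum, and one expresses both the shear-type coordinates on the interior edges and the decoration coordinates at the vertices of $\Gamma$ through the intersection numbers of $\ell$ with the edges of $\Gamma$ near the relevant edge or vertex. The second step is to substitute these values into the explicit formula for $\psi$ from \S 4.1 — the additive (tropical) form of the cross-ratio/Ptolemy relation between shear parameters and $\lambda$-lengths — and to check that on the $\mathbb{R}^E$-factor the contributions coming from the shear inputs and from the decoration inputs recombine precisely into the intersection number of $\ell$ with each edge, that is, into $\Phi_a(\ell)$ as in Proposition \ref{Prop coord of A-lami}, while on the $\mathbb{R}^{V^i}$-factor the alternating sum attached to each puncture vertex telescopes to $0$, which is the lamination-theoretic shadow of the triviality of the enhancement. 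This yields the second identity and completes the proof.

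The step I expect to be the main obstacle is the second one above: pinning down the composite $\Psi_x\circ I_a$ concretely and then reconciling the piecewise-linear formula for $\psi$ with the combinatorial, intersection-number description of $\Phi_a$. The delicate part is the bookkeeping of signs and orientations — which side of an edge a leaf turns on, the cyclic order of the four sides of the quadrilateral around an interior edge, and the sign conventions at the puncture vertices — and the cleanest way to keep this under control is to carry out the computation quadrilateral by quadrilateral (and triangle by triangle for the vertex contributions), where, by the additivity and locality noted in the first paragraph, each case becomes a finite and routine check.
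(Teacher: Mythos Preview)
Your plan for the first identity $\Psi_x\circ I_x=\iota_x\circ\Phi_x$ is fine and matches the paper: this is immediate from the definitions.

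The second identity is where your proposal has a genuine gap. You describe the map $\psi$ from \S 4.1 as ``the additive (tropical) form of the cross-ratio/Ptolemy relation'' and propose to verify the identity ``quadrilateral by quadrilateral''. Neither is correct. The map $\psi$ defined in \S 4.1 is the \emph{actual} hyperbolic-geometric coordinate change between shear--decoration coordinates and $\lambda$--boundary-length coordinates: by Proposition~\ref{Prop l-len from shear and deco} the half-edge contribution is
\[
a_h \;=\; d_v \;+\; \log\frac{\sum_{i=1}^{n}\exp\bigl(\sum_{j=1}^{i-1}x_{j+1}\bigr)}{\min_{1\le k\le n}\bigl\{\sum_{i=1}^{n}\exp\bigl(\sum_{j=1}^{i-1}x_{k+j}\bigr)\bigr\}},
\]
which is a $\log\Sigma\exp$ expression involving \emph{all} $n$ edges incident to $v$, together with a minimum over the cyclic starting point. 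It is not piecewise linear, and it is not local to the quadrilateral containing $e$; so the reduction to a single quadrilateral or triangle cannot work as stated.

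What makes the theorem true is precisely that, \emph{when evaluated on shear coordinates coming from an $\mathcal{A}$-lamination}, this complicated formula collapses to an additive count of weights. The paper establishes this via two geometric lemmas in the upper half-plane model. Lemma~\ref{Lem deco pass b1} identifies which cyclic index realises the minimum in the denominator (equivalently, which horocycle is the origin $d_v=0$): a cancellation argument on the signed shear contributions of curves shows that the base point $b_1$ has maximal imaginary part, so the $\min$ is attained there. Lemma~\ref{Lem len from b to alpha} then shows, again by a cancellation argument between positive and negative shear factors along the star of $v$, that the remaining $\log$-ratio equals the sum $\sum_k w_{\delta_k}$ of the weights of the general curves $\delta_1,\dots,\delta_m$ passing between $e$ and the $\mathcal{A}$-curve near $v$. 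Combining these with the decoration weight $w_\delta$, the analogous contributions at $v'$, and the shear term $x_e=\sum_k w_{\gamma_k}$, one recovers exactly the total intersection number $a_e(L)$. The $\mathbb{R}^{V^i}$-component vanishes by Proposition~\ref{Prop ori and enha}, since an $\mathcal{A}$-lamination has no curves ending at puncture vertices; your telescoping remark for this part is essentially correct.

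In short: the heart of the proof is not a local tropical identity but the pair of global cancellation arguments around each vertex (Lemmas~\ref{Lem deco pass b1} and~\ref{Lem len from b to alpha}), which explain why the non-linear $\psi$ linearises on the image of $\Psi_x\circ I_a$. Your proposal, as written, does not supply a substitute for this step.
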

\begin{figure}[htbp]
\begin{center}
\includegraphics[width=12cm]{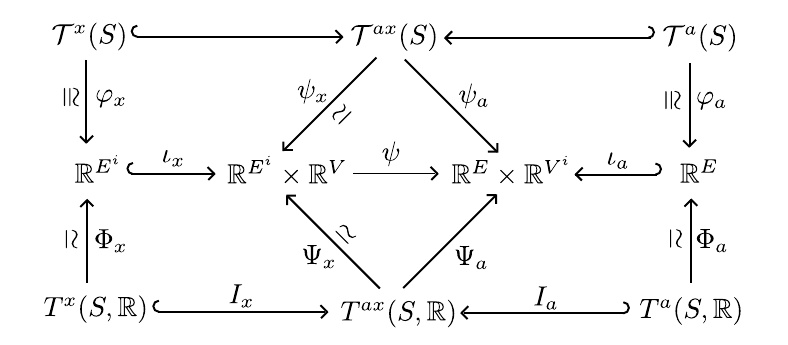}
\caption{The commutative diagram describing that $\mathcal{T}^{ax}(S)$ is a common generalization of $\mathcal{T}^x(S)$ and $\mathcal{T}^a(S)$, and the compatibility of $\Phi_x$ and $\Phi_a$. 
The symbol $\simeq$ means homeomorphisms and the symbol $\cong$ means diffeomorphisms. }
\label{Fig diag3}
\end{center}
\end{figure}
Theorem A and Theorem B imply the upper side of the diagram in Figure \ref{Fig diag3}, and Theorem C implies the lower side of the diagram in Figure \ref{Fig diag3}. 
There is not obvious choice of the origin of the decoration parameter. 
We carefully pick an origin so that Theorem C holds. 
Unfortunately, with this choice of the origin, the map $\psi$ is only a piecewise diffeomorphism. 
For another choice of the origin, the map $\psi_x$ in Theorem A and the map $\psi_a$ in Theorem B are diffeomorphisms. 
\par
The organization of this paper is as follows. 
In \S 2, we review the definitions of enhanced and decorated Teichm\"{u}ller spaces. 
In \S 3.1, we introduce the decorated enhanced Teichm\"{u}ller space. 
In \S 3.2, we parametrize the decorations and prove Theorem A. 
In \S 3.3, we analyze the properties of decoration parameters. 
In \S 4.1, we give some calculations associated with generalized shear coordinates. 
In \S 4.2, we prove Theorem B. 
In \S 4.3, we note that the assumption in Theorem B is a sufficient condition. 
In \S 5.1, we review the definitions of the spaces of laminations corresponding to generalized Teichm\"{u}ller spaces. 
In \S 5.2, we introduce the space of laminations corresponding to the decorated enhanced Teichm\"{u}ller space. 
In \S 5.3, we prove Theorem C. 

\section{Enhanced and decorated Teichm\"{u}ller spaces}
\noindent
In this chapter, we introduce the Teichm\"{u}ller space and its variations. 
These generalized Teichm\"{u}ller spaces have the coordinate spaces, which will be generalized to the coordinate space of the decorated enhanced Teichm\"{u}ller space. 

\subsection{Teichm\"{u}ller spaces}
\noindent
In this section, we introduce the terminologies and the symbols used in this paper. 
We will define the standard deformation space of the marked hyperbolic surfaces, which is called the {\it Teichm\"{u}ller space}. 
\par
Let $\hat{S}$ be an oriented connected compact surface of genus $g \ge 0$ with $c \ge 0$ boundary components. 
Take $p \ge 0$ interior marked points and $s \ge 0$ boundary marked points on $\hat{S}$. 
Let $V^i$ denote the set of the interior marked points, $V^b$ denote the set of the boundary marked points, and $V$ be the union of $V^i$ and $V^b$. 
The surface $S = \hat{S} \setminus V$ is the reference surface of the hyperbolic surfaces. 
A point of $V^i$ is called a {\it puncture}, a point of $V^b$ is called a {\it spike},  and a boundary component of $\hat{S}$ is called a {\it crown} of $S$. 
We assume that all crowns have at least one spike and that $4-4g-2p-2c-s<0$. 
Then $S$ admits a hyperbolic structure with crown-shaped boundaries. 
\par
\begin{figure}[htbp]
\begin{center}
\includegraphics[width=14cm]{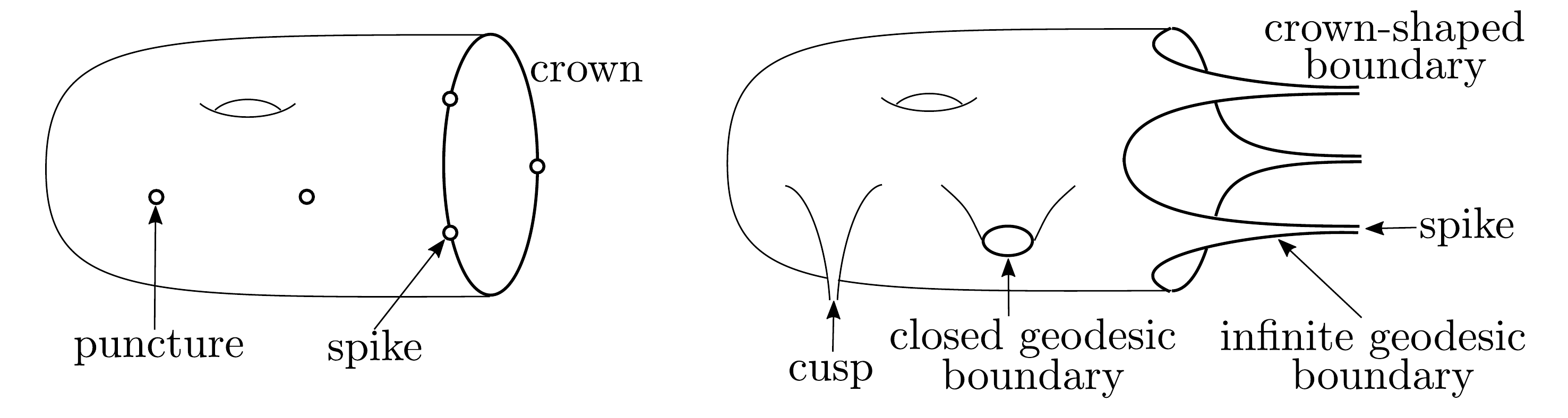}
\caption{A reference surface (left) and a hyperbolic surface (right). }
\label{Fig ref and hyp surf}
\end{center}
\end{figure}
Let $X$ be a complete 2-dimensional hyperbolic Riemannian manifold with totally geodesic boundaries. 
$X$ is called a {\it hyperbolic surface} of the reference surface $S$ if $X$ has a finite area and the subsurface $\mathring{X}$ of $X$ obtained by removing all closed geodesic boundaries is homeomorphic to $S$. 
A hyperbolic surface $X$ may have cusps, closed geodesic boundaries and crown-shaped boundaries that are composed of asymptotic infinite geodesic boundaries, and $X$ has no funnels (Figure \ref{Fig ref and hyp surf}). 
\par
\begin{dfn}\label{Def tri}(Triangulations)
A graph $\Gamma$ on $\hat{S}$ is called a {\it triangulation} of $S$ if the vertex set of $\Gamma$ is $V$ and each component of the complement of $\Gamma$ is a triangle bounded by (not necessary distinct) 3 edges and 3 vertices. 
\end{dfn}
A triangulation $\Gamma$ of $S$ makes an ideal triangulation of $X$. 
Map the edges of $\Gamma$ by a homeomorphism from $S$ to $\mathring{X}$, and isotope them to be geodesics, then an ideal triangulation is obtained. 
This ideal triangulation depends on the homeomorphism (that are the markings of $X$) and the deformations (that are the enhancements of $X$) around the closed geodesic boundaries (Figure \ref{Fig variety of ideal tri}). 
\par
\begin{figure}[htbp]
\begin{center}
\includegraphics[width=14cm]{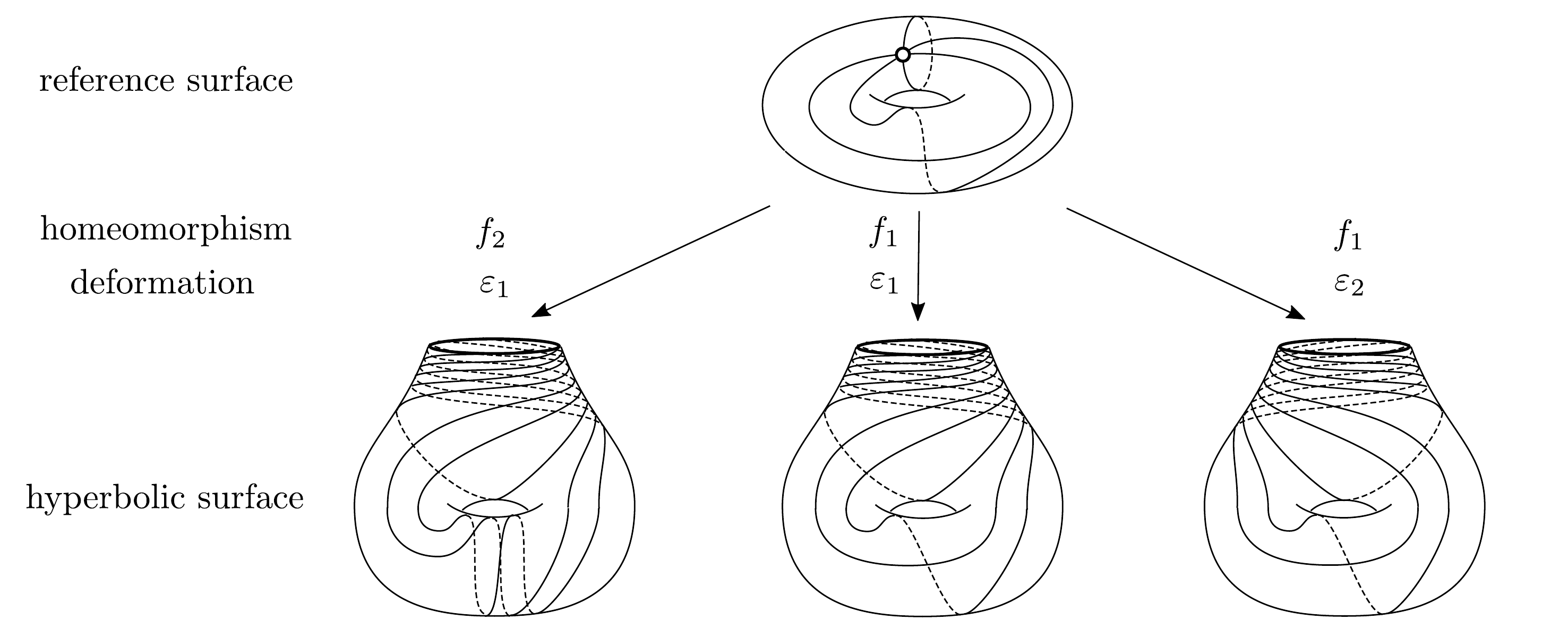}
\caption{Ideal triangulations depend on the marking $f_i:S\rightarrow\mathring{X}$ and the enhancement $\varepsilon_i$. }
\label{Fig variety of ideal tri}
\end{center}
\end{figure}
\begin{dfn}\label{Def mark}(Markings)
An orientation preserving homeomorphism $f:S\rightarrow\mathring{X}$ is called a {\it marking} of $X$ and a pair $(X, f)$ is called a {\it marked hyperbolic surface} of $S$. 
\end{dfn}
Marked hyperbolic surfaces $(X_1, f_1), (X_2, f_2)$ are {\it Teichm\"{u}ller equivalent} and denoted by $(X_1, f_1)\sim_T(X_2, f_2)$, if there is an orientation preserving isometry $\varphi$ from $X_1$ to $X_2$ such that the maps $\varphi\circ f_1, f_2$ are homotopic. 
Let define the deformation space of the marked hyperbolic structures on $S$. 
\begin{dfn}\label{Def Teich sp}(Teichm\"{u}ller spaces)
The set of Teichm\"{u}ller equivalence classes of marked hyperbolic surfaces of $S$ is called the {\it Teichm\"{u}ller space} of $S$, which is denoted by $\mathcal{T}(S)$. 
\end{dfn}
Sometimes the Teichm\"{u}ller space means the quotient space of the marked hyperbolic surfaces without closed geodesic boundaries.  
To avoid confusion, this is denoted by $\mathcal{T}_0(S)$. 

\subsection{Enhanced Teichm\"{u}ller spaces}
\noindent
In this section, we consider the additional data on the closed geodesic boundaries of the hyperbolic surfaces, which is called the {\it enhancement}. 
We will define the deformation space respecting the marking and the enhancement, and construct the shear coordinates of this space. 
\begin{dfn}\label{Def enha}(Enhancements)
Let $f$ be a marking of $X$ and $P(X)$ denote the collection of the cusps and the closed geodesic boundaries of $X$. 
A map $\varepsilon:P(X)\rightarrow\{0, \pm1\}$ is called an {\it enhancement} of $X$ if it maps cusps to 0 and maps closed geodesic boundaries to $\pm 1$. 
Then a triplet $(X, f, \varepsilon)$ is called an {\it enhanced marked hyperbolic surface} of $S$. 
\end{dfn}
Enhanced marked hyperbolic surfaces $(X_1, f_1, \varepsilon_1), (X_2, f_2, \varepsilon_2)$ are {\it enhanced equivalent} and denoted by $(X_1, f_1, \varepsilon_1)\sim_E(X_2, f_2, \varepsilon_2)$, if $(X_1, f_1)\sim_T(X_2, f_2)$ and $\varepsilon_2\circ\varphi_\ast=\varepsilon_1$ where the map $\varphi_\ast$ is induced from the isometry $\varphi$ preserving the marking. 
Since the enhancement is thought of as the signs of the lengths of the closed geodesic boundaries and cusps are regarded as the closed geodesic boundaries whose length are 0, the enhancement at the cusps can be ignored. 
Let us define the deformation space of the marked hyperbolic structures with the enhancement on $S$. 
\begin{dfn}\label{Def enha Teich sp}(Enhanced Teichm\"{u}ller spaces)
The set of enhanced equivalence classes of enhanced marked hyperbolic surfaces of $S$ is called the {\it enhanced Teichm\"{u}ller space} of $S$, which is denoted by 
$$\mathcal{T}^x(S)=\{\mathrm{Enhanced\ marked\ hyperbolic\ surfaces\ of\ }S\} /\sim_E. $$
\end{dfn}
For a map $\varepsilon_0:V^i\rightarrow\{0, \pm1\}$, let
$$\mathcal{T}^x_{\varepsilon_0}(S)=\{[X, f, \varepsilon]\in\mathcal{T}^x(S)\ |\ \varepsilon(C_v)=\varepsilon_0(v)\mathrm{\ or\ }0\mathrm{\ for\ all\ }v\in V^i\}, $$
where $C_v$ is the cusp or the closed geodesic boundary corresponding to $v$. 
$\mathcal{T}^x_{\varepsilon_0}(S)$ is identified with $\mathcal{T}_0(S)$ if $\varepsilon_0(V^i)=\{0\}$, and $\mathcal{T}^x_{\varepsilon_0}(S)$ is identified with $\mathcal{T}(S)$ if $\varepsilon_0(V^i)=\{\pm1\}$ by forgetting the enhancement. 
For maps $\varepsilon_1, \varepsilon_2:V^i\rightarrow\{\pm1\}$, $\mathcal{T}^x_{\varepsilon_1}(S)$ and $\mathcal{T}^x_{\varepsilon_2}(S)$ are glued along the subspace $\mathcal{T}^x_{\frac{1}{2}(\varepsilon_1+\varepsilon_2)}(S)$ in $\mathcal{T}^x(S)$. 
\begin{dfn}\label{Def shear para}(Shear parameters)
Consider an interior edge $e$ of a triangulation $\Gamma$. 
Let $T, T'$ denote the triangles which have $e$ as a boundary, $v_1, v_2, v_3$ denote the vertices of $T$, and $v_3, v_4, v_1$ denote the vertices of $T'$. 
Lift them to the upper half plane $\mathbb{H}^2$. 
The foot of the perpendicular from $\widetilde{v_2}$ to $\widetilde{e}$ is called the {\it base point} of $\widetilde{e}$ with respect to $\widetilde{T}$. 
The signed length along $\widetilde{e}$ from the base point $b$ of $\widetilde{e}$ with respect to $\widetilde{T}$ to the base point $b'$ of $\widetilde{e}$ with respect to $\widetilde{T'}$ is called the {\it shear parameter} of $e$, where the direction respects to the orientation of the boundary of $\widetilde{T}$ (Figure \ref{Fig shear para}). 
\end{dfn}
\begin{figure}[htbp]
\begin{center}
\includegraphics[width=14cm]{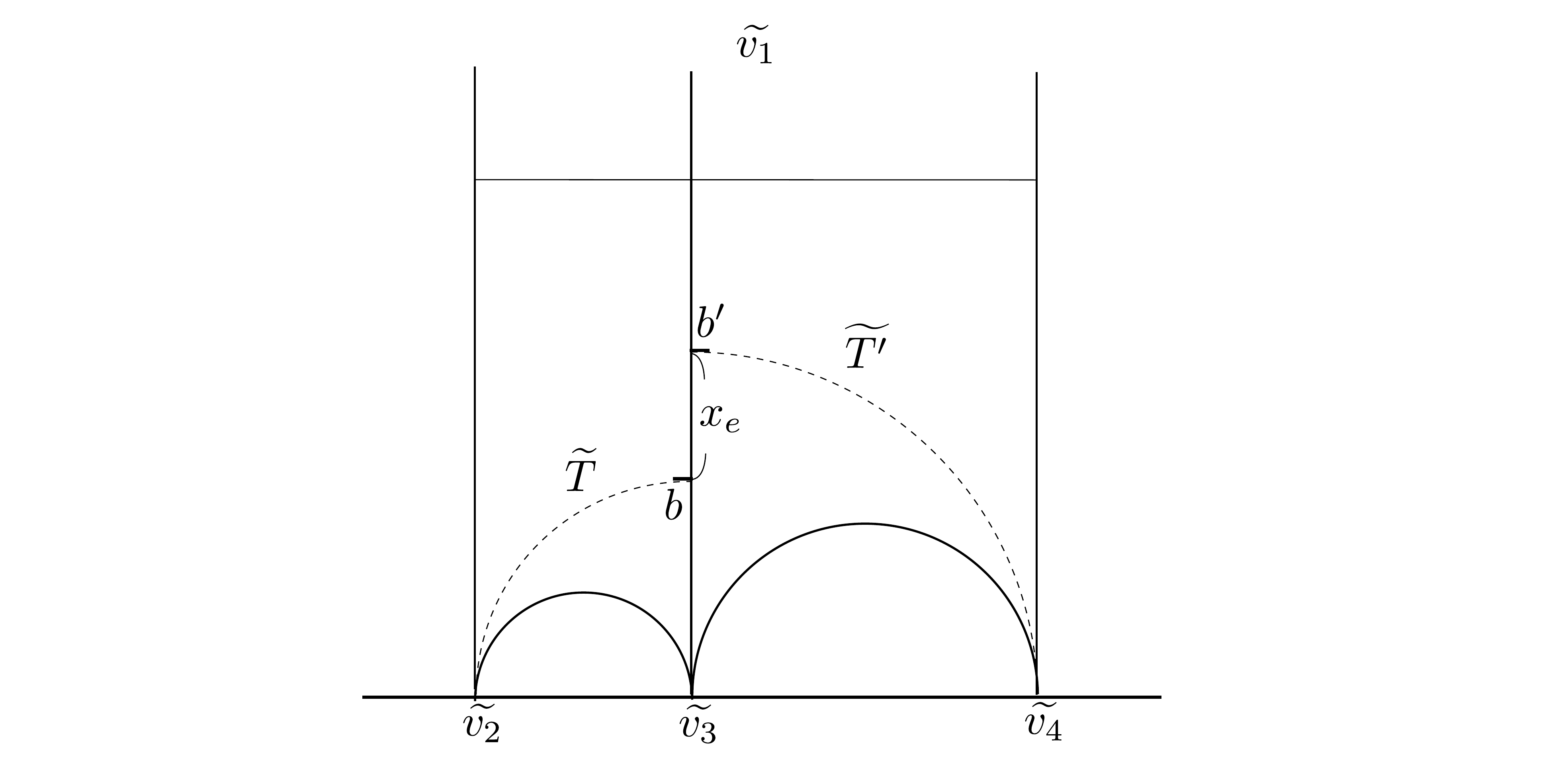}
\caption{The shear parameter $x_e$ of an edge $e$, and the base point $b$ of $\widetilde{e}$ with respect to $\widetilde{T}$. }
\label{Fig shear para}
\end{center}
\end{figure}
The enhancement of a puncture vertex $v$ is thought of as the direction of the spiral of the edges which have $v$ as endpoints. 
In Figure \ref{Fig variety of ideal tri}, the enhancement $\varepsilon_i$ maps the closed geodesic boundary to the sign $(-1)^{i-1}$. 
The enhancement of $v$ appears as the sign of the sum of the shear parameters of the edges which have $v$ as endpoints (Proposition \ref{Prop b-len from shear}), however the shear parameters of the edges which have $v$ as both of the endpoints are added two times. 
\par
It is well known that the enhanced Teichm\"{u}ller space is parametrized by the shear parameters (\S 4.1 in \cite{Fock-Goncharov}). 
\begin{prop}\label{Prop shear coord}\cite{Fock-Goncharov}(Shear coordinates)
Let $\Gamma$ be a triangulation of $S$. 
Then the map 
$$\mathcal{T}^x(S)\rightarrow\mathbb{R}^{E^i(\Gamma)}$$
giving the shear parameters of the interior edges of $\Gamma$ is a global parametrization, where $E^i(\Gamma)$ is the set of the interior edges of $\Gamma$. 
\end{prop}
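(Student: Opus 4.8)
The plan is to construct an explicit inverse to the shear-parameter map and to verify that both directions are smooth, so that the map is a diffeomorphism. First I would build, from a tuple $(x_e)_{e\in E^i}\in\mathbb{R}^{E^i}$, a developing map into $\mathbb{H}^2$. Fix a triangle $T_0$ of $\Gamma$ and a lift of it to an ideal triangle $\widetilde{T_0}\subset\mathbb{H}^2$, with a chosen labelling of its ideal vertices. Develop the triangulation by attaching, each time an interior edge $e$ is crossed, the adjacent ideal triangle in the unique position for which the base point of the shared edge with respect to the current triangle and the base point with respect to the new one are separated along that edge by signed length $x_e$, as in Definition~\ref{Def shear para} (equivalently, this is Fock--Goncharov's construction via edge and triangle matrices in $\mathrm{PSL}_2(\mathbb{R})$). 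Continuing along paths in the dual graph of $\Gamma$ yields a developing map $\mathrm{dev}\colon\widetilde S\to\mathbb{H}^2$ and a holonomy homomorphism $\rho$ into $\mathrm{PSL}_2(\mathbb{R})$. The first thing to check is that $\rho$ is well defined, i.e. that the composite isometry around a loop in the dual graph depends only on its homotopy class; this reduces to the relations around the vertices of $\Gamma$ and is a finite computation with the edge isometries.

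Next I would identify the metric completion of the resulting (a priori incomplete) structure with a hyperbolic surface of reference type $S$ carrying an enhancement. Around a spike vertex $v\in V^b$ the incident triangles develop to a fan with common ideal vertex accumulating onto two asymptotic geodesics, so the completion glues in exactly a crown-shaped boundary carrying precisely the spikes of $\Gamma$ on that crown, without creating a funnel. Around a puncture vertex $v\in V^i$ the product of the edge-isometries encircling $v$ fixes the lift of $v$; it is parabolic exactly when the signed sum of the shear parameters of the edges incident to $v$ (those incident at both ends counted twice) vanishes, giving a cusp, and hyperbolic otherwise, in which case truncating along the closed geodesic in that free homotopy class produces the closed geodesic boundary $C_v$, whose enhancement $\varepsilon(C_v)$ is the sign of that sum --- this is the phenomenon recorded in \cite{Fock-Goncharov} and in the discussion preceding this statement. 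Hence $(x_e)$ determines a point $[X,f,\varepsilon]\in\mathcal{T}^x(S)$, and replacing $T_0$, its lift, or the auxiliary vertex labellings only conjugates $\rho$, so this point is independent of those choices.

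Finally I would check that the two constructions are mutually inverse and smooth. Starting from a class $[X,f,\varepsilon]$, push $\Gamma$ forward by the marking and isotope its edges to geodesics --- spiralling into each $C_v$ in the direction prescribed by $\varepsilon(C_v)$, running straight into each cusp, and limiting onto the spikes at each crown --- to obtain an ideal triangulation of $X$, from which the shear parameters are read off on a lift to $\mathbb{H}^2$; redeveloping with these parameters recovers $X$ up to an isometry homotopic to the identity, which yields injectivity and surjectivity at once. Smoothness in both directions is then routine, since the placement of each triangle is real-analytic in the $x_e$, and conversely each shear parameter is the signed distance between two base points that depend smoothly on the ideal endpoints of the four adjacent triangle-vertices, which in turn vary smoothly with the hyperbolic structure. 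The hard part will be the completion step of the middle paragraph: showing that \emph{arbitrary} real shear data produce a finite-area structure whose completion is exactly a hyperbolic surface of the reference type --- in particular that hyperbolic peripheral holonomies yield honest closed geodesic boundaries rather than flaring funnels or cone points, that each crown receives exactly its prescribed spikes, and that there is no monodromy obstruction to the gluing around the vertices of $\Gamma$.
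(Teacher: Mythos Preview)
The paper does not give its own proof of this proposition: it is simply cited from \cite{Fock-Goncharov} (see \S4.1 there) and used as a known result. So there is nothing in the paper to compare your argument against line by line.

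That said, your outline is essentially the standard Fock--Goncharov construction and is correct as a sketch: build a developing map by propagating ideal triangles across interior edges with the prescribed shears (equivalently via the edge/triangle matrices in $\mathrm{PSL}_2(\mathbb{R})$), read off the holonomy, and then analyze the completion near each vertex to see a cusp, a closed geodesic boundary with the sign of $\sum x_e$ as enhancement, or a crown at the spikes. Your identification of the ``hard part'' is accurate, and your treatment of the inverse via straightening the triangulation on $X$ with spiralling dictated by $\varepsilon$ is exactly how this is done. One small point worth tightening: the well-definedness of $\rho$ does not require any relation-checking, since the dual graph of an ideal triangulation of $S$ is a tree modulo the $\pi_1$-action (there are no $2$-cells to go around once punctures and spikes are removed), so the developing map along paths in the dual tree of $\widetilde S$ is automatically consistent and $\rho$ is just the deck-transformation action; the only genuine local analysis is the peripheral one you already describe.
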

These coordinates are called the {\it shear coordinates}, and we give $\mathcal{T}^x(S)$ the differential structure of the coordinate space $\mathbb{R}^{E^i(\Gamma)}$. 

\subsection{Decorated Teichm\"{u}ller spaces}
\noindent
In this section, we consider the other data on the ends of the hyperbolic surfaces, which is called the {\it decoration}. 
We will define the deformation space respecting the marking and the decoration, and construct the $\lambda$-length coordinates of this space. 
\par
Suppose that hyperbolic surfaces in this section have no closed geodesic boundaries, and let $X$ be a hyperbolic surface. 
For a puncture (or spike) vertex $v$, lift the cusp (or the spike) $C_v$ corresponding to $v$ to the ideal point of the hyperbolic plane, and take a horocycle centered at the lift of $C_v$. 
The projection of the horocycle to $X$ is called a {\it horocycle} (or a {\it horocyclic arc}) around $C_v$. 
A {\it decoration curve} of $v$ is a horocycle centered at the cusp or a horocyclic arc centered at the spike corresponding to $v$. 
If a decoration curve around $v$ is a horocyclic arc, its endpoints are in the infinite geodesic boundaries which are asymptotic to the spike corresponding to $v$. 
\par
Only a part of the horocycle may be contained in $X$. 
For example, on the once punctured monogon in the left of Figure \ref{Fig deco curve}, the decoration curve $\gamma_1$ around $v_1$ is a part of horocycle. 
We define the length of the decoration curve as the length of the reconstructed horocycle. 
Glue the hyperbolic half planes along the infinite geodesic boundaries which intersects the decoration curve, and reconstruct the decoration curve by adding the horocyclic arcs on the glued half planes. 
Horocyclic arcs around spikes are regarded as the part of horocycles in half. 
For example, on the ideal triangle in the right of Figure \ref{Fig deco curve}, the length of the decoration curve $\gamma_1$ around $v_1$ is the double of the length of the horocyclic arc $\gamma_1$ (Figure \ref{Fig deco curve}). 
\begin{figure}[htbp]
\begin{center}
\includegraphics[width=14cm]{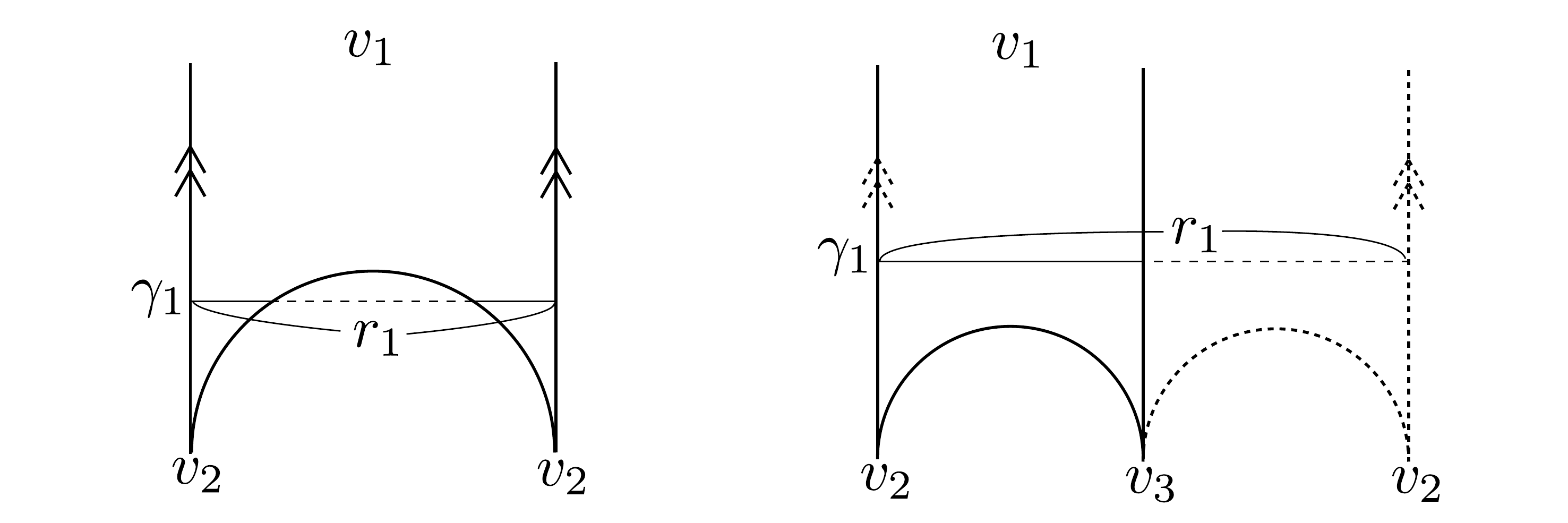}
\caption{A part of horocycle (left), a horocyclic arc (right) and their reconstructing. }
\label{Fig deco curve}
\end{center}
\end{figure}
\begin{dfn}\label{Def deco}(Decorations)
Let $f$ be a marking of $X$ and $D=\bigcup_{v\in V}\gamma_v$ be the union of the decoration curves $\gamma_v$ around the cusps or the spikes corresponding to the vertices $v$. 
Then $D$ is called a {\it decoration} of $X$ and a triplet $(X, f, D)$ is called a {\it decorated marked hyperbolic surface} of $S$. 
\end{dfn}
Decorated marked hyperbolic surfaces $(X_1, f_1, D_1), (X_2, f_2, D_2)$ are {\it decorated equivalent} and denoted by $(X_1, f_1, D_1)\sim_D(X_2, f_2, D_2)$, if $(X_1, f_1)\sim_T(X_2, f_2)$ and $\varphi(D_1)=D_2$ where $\varphi$ is the isometry preserving the marking. 
Let us define the deformation space of the marked hyperbolic structures with the decoration on $S$. 
\par
\begin{dfn}\label{Def deco Teich sp}(Decorated Teichm\"{u}ller spaces)
The set of decorated equivalence classes of decorated marked hyperbolic surfaces of $S$ is called the {\it decorated Teichm\"{u}ller space} of $S$, which is denoted by 
\begin{gather*}
\mathcal{T}^a(S)=\{\mathrm{Decorated\ marked\ hyperbolic\ surfaces\ of\ }S\\
\mathrm{without\ closed\ geodesic\ boundaries}\} /\sim_D. 
\end{gather*}
\end{dfn}
\begin{dfn}\label{Def l-length}($\lambda$-lengths)
Consider an edge $e$ of a triangulation $\Gamma$. 
Let $T, T'$ denote the triangles which have $e$ as a boundary, $v_1, v_2, v_3$ denote the vertices of $T$. 
Let $\gamma_1, \gamma_3$ be the decoration curves around $v_1, v_3$, respectively. 
Lift them to the upper half plane $\mathbb{H}^2$. 
The signed length along $\widetilde{e}$ from the intersection with $\widetilde{\gamma_3}$ to the base point of $\widetilde{e}$ with respect to $\widetilde{T}$ is called the {\it $\lambda$-length} of the half-edge $h$ of $e$ which has $v_3$ as the endpoint, where the direction respects to the orientation of the boundary of $\widetilde{T}$. 
Take a point $p$ on $\widetilde{e}$, and the sum of two signed lengths along $\widetilde{e}$ from the intersection with $\widetilde{\gamma_3}$ and $\widetilde{\gamma_1}$ to $p$ is called the {\it $\lambda$-length} of the edge $e$, where the directions respect to the orientations of the boundaries of $\widetilde{T}$ and $\widetilde{T'}$, respectively (Figure \ref{Fig l-length}). 
\end{dfn}
\begin{figure}[htbp]
\begin{center}
\includegraphics[width=14cm]{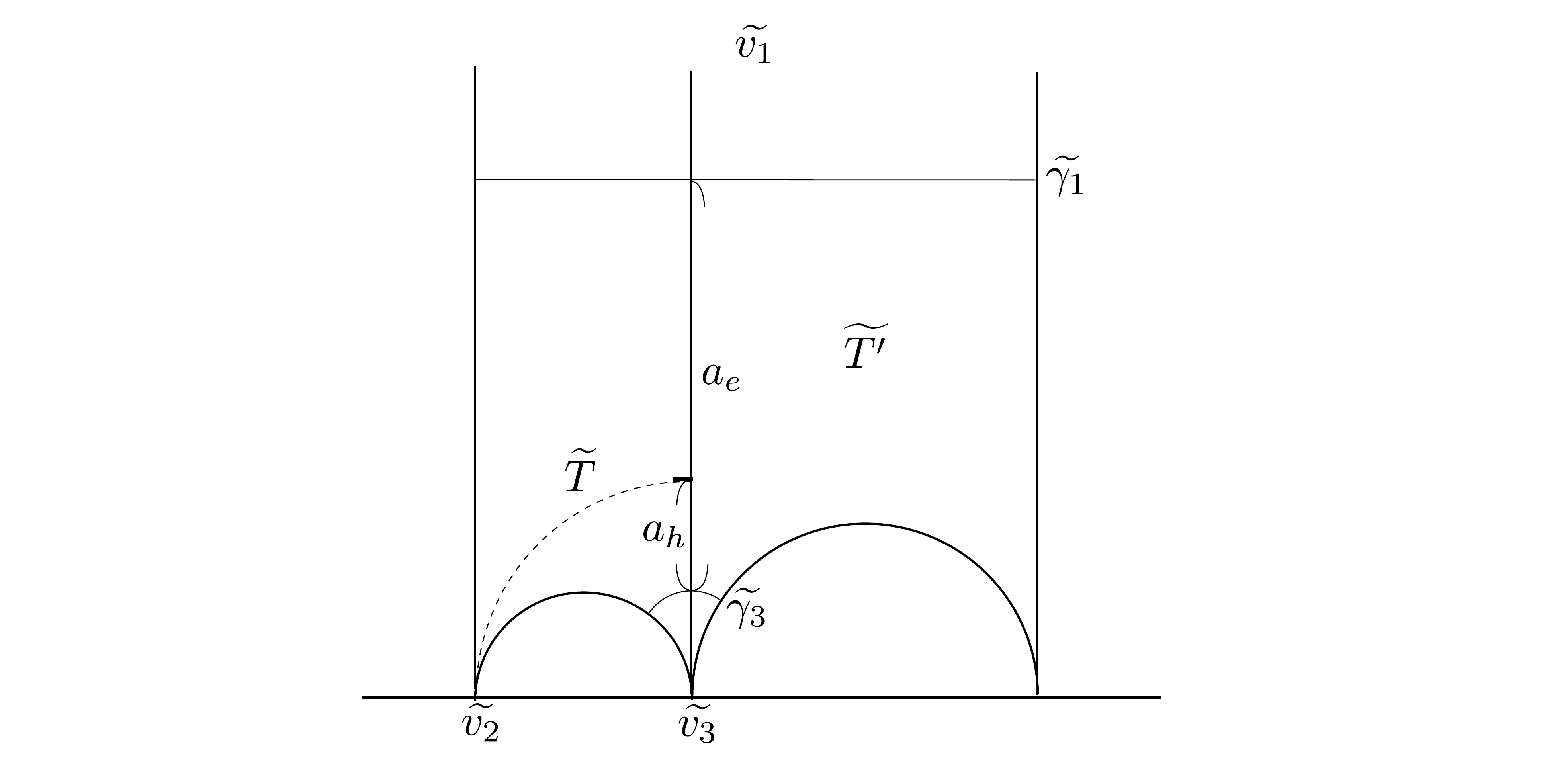}
\caption{The $\lambda$-length $a_e$ of an edge $e$, and the $\lambda$-length $a_h$ of a half-edge $h$. }
\label{Fig l-length}
\end{center}
\end{figure}
It is well known that the decorated Teichm\"{u}ller space is parametrized by the $\lambda$-length parameters (\S 4.2 in \cite{Fock-Goncharov}). 
\begin{prop}\label{Prop l-length coord}\cite{Fock-Goncharov}($\lambda$-length coordinates)
Let $\Gamma$ be a triangulation of $S$. 
Then the map 
$$\mathcal{T}^a(S)\rightarrow\mathbb{R}^{E(\Gamma)}$$
giving the $\lambda$-lengths of the edges of $\Gamma$ is a global parametrization, where $E(\Gamma)$ is the set of the edges of $\Gamma$. 
\end{prop}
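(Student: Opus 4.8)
The statement is classical --- it is due to Penner \cite{Penner2} for punctured surfaces and to Fock--Goncharov \cite{Fock-Goncharov} in the present generality --- so I only sketch how I would argue it. The plan is to produce an explicit inverse of the $\lambda$-length map and to check that the two compositions are the identity. First I would record the local model: a decorated ideal triangle in $\mathbb{H}^2$ (three distinct points at infinity together with a horocycle at each) is determined up to orientation-preserving isometry by the triple of $\lambda$-lengths of its sides, and every triple in $\mathbb{R}^3$ is realized; in the exponentiated normalization this is Penner's identity expressing the length of the horocyclic segment cut off at a vertex as a monomial in the three side lengths. Given $(a_e)_{e\in E(\Gamma)}\in\mathbb{R}^{E(\Gamma)}$, I then build for each triangle $T$ of $\Gamma$ the decorated ideal triangle $\Delta_T$ whose side $\lambda$-lengths are the prescribed numbers.

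Next comes the assembly. For an interior edge $e$ glued from a side of $T$ and a side of $T'$, both $\Delta_T$ and $\Delta_{T'}$ assign the same $\lambda$-length $a_e$ to that side, so there is a unique isometric identification of the two sides under which the two horocyclic segments join into a smooth arc; this forces the gluing, and in fact prescribes the shear parameter of $e$ as an (additive cross-ratio) function of the surrounding $\lambda$-lengths. Carrying out all these identifications produces a developing map and a holonomy representation, hence a hyperbolic structure with a decoration $D$ on a surface $\mathring{X}$. I would then verify that this structure is complete and of finite area, has no closed geodesic boundary, and that its ends realize exactly the combinatorial type prescribed by $V^i$, $V^b$ and the crowns of $S$: around a puncture vertex the triangles spiral together with parabolic holonomy --- there is no closing-up constraint, precisely because the $\lambda$-length data is unconstrained --- and the horocyclic segments assemble into an embedded horocycle, while around a spike vertex the decorated triangles assemble into a crown-shaped end whose bounding infinite geodesics carry a horocyclic arc. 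This yields a point $[X,f,D]\in\mathcal{T}^a(S)$, hence a map $\mathbb{R}^{E(\Gamma)}\to\mathcal{T}^a(S)$ (a dimension count $|E(\Gamma)|=\dim\mathcal{T}^a(S)$ is a useful consistency check here).

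It remains to see that this map inverts the $\lambda$-length map and is smooth. One composition is the identity by construction, since $\Delta_T$ was built to have the prescribed side $\lambda$-lengths. For the other, given $[X,f,D]$ I realize $\Gamma$ by geodesics and cut $X$ along them; this decomposes $X$ into decorated ideal triangles whose side $\lambda$-lengths are the $\lambda$-lengths of $X$, so the uniqueness in the local model together with the uniqueness of the decoration-matching gluing shows that the reconstruction returns $[X,f,D]$. Finally, $\lambda$-lengths depend smoothly on the decorated structure, and solving for the developing map and holonomy from the triangle data is smooth, so the map is a homeomorphism --- indeed a diffeomorphism onto $\mathbb{R}^{E(\Gamma)}$, compatibly with the smooth structure that $\mathcal{T}^a(S)$ inherits from shear coordinates via Proposition \ref{Prop shear coord}.

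The delicate point is the assembly step: checking that the glued object is genuinely a complete, finite-area hyperbolic surface of the correct topological type, with the correct crown ends --- in particular the bookkeeping for the spike vertices and the crown boundary components, which lies outside Penner's punctured-surface setup. Everything else is essentially the classical argument.
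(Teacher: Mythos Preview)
The paper does not give its own proof of this proposition; it is quoted as a known result with a citation to \cite{Fock-Goncharov} (and, implicitly, to Penner \cite{Penner2}), so there is nothing to compare your argument against. Your sketch is the standard reconstruction argument found in those references --- build decorated ideal triangles from the prescribed side $\lambda$-lengths, glue them along edges so that the horocyclic arcs match, and observe that this forces a unique complete structure with parabolic holonomy at the punctures --- and it is correct in outline.

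One minor slip: in your final paragraph you speak of the smooth structure on $\mathcal{T}^a(S)$ as being inherited from shear coordinates via Proposition~\ref{Prop shear coord}, but shear coordinates parametrize $\mathcal{T}^x(S)$, not $\mathcal{T}^a(S)$. In this paper the smooth structure on $\mathcal{T}^a(S)$ is \emph{defined} to be that of the coordinate space $\mathbb{R}^{E(\Gamma)}$ via the $\lambda$-length map itself, so there is no separate compatibility to verify.
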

These coordinates are called the {\it $\lambda$-length coordinates} and, we give $\mathcal{T}^a(S)$ the differential structure of the coordinate space $\mathbb{R}^{E(\Gamma)}$. 

\section{Decorated enhanced Teichm\"{u}ller spaces}
\noindent
In this chapter, we consider both of the additional data which are introduced before. 
We will define the deformation space respecting the marking, the enhancement and the decoration, and construct the shear-decoration coordinates of this space. 
This naturally generalized coordinates are obtained by adding the decoration parameters. 

\subsection{Decorated enhanced Teichm\"{u}ller spaces}
\noindent
The decorated enhanced Teichm\"{u}ller space is the quotient space of the enhanced marked hyperbolic surfaces which have the decorations.
We need to define the decoration curves around the closed geodesic boundaries. 
\par
Let $X$ be a hyperbolic surface. 
For a puncture vertex $v$, lift the closed geodesic boundary $\partial_v$ corresponding to $v$ to the hyperbolic plane, and take a curve on $\widetilde{X}$ which is equidistant from a lift $\widetilde{\partial_v}$ of $\partial_v$. 
The projection of the equidistant curve to $X$ is called an {\it equidistant curve} around $\partial_v$. 
Suppose that equidistant curves in this paper have positive distance from the corresponding closed geodesic boundaries. 
The same as horocycles, decoration curves may be parts of equidistant curves. 
The length of the part of equidistant curve is defined as the length of the reconstructed curve. 
\begin{prop}\label{Prop lim of eq dist curve}(The limit of the equidistant curves)
Let $v$ be a puncture vertex of $S$ and $X_l$ be a hyperbolic structure on reference surface $S$ whose closed geodesic boundary $\partial_l$ corresponding to $v$ has length $l$. 
Let $r$ be a positive number, and $\gamma_l$ be the length $r$ equidistant curve on $X_l$ around $\partial_l$. 
Then $\gamma_l$ converges to the horocycle of length $r$ around the cusp $C$ corresponding to $v$ when $l$ tends to $0$. 
\end{prop}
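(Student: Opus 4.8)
The plan is to reduce the statement to an explicit computation in the standard hyperbolic collar and then to let $l\to 0$ in the resulting family of metrics. First I would recall the classical pinching picture: as $l\to 0$ the structures $X_l$ on $S$ vary continuously and degenerate to a hyperbolic structure $X_0$ whose end at $v$ is the cusp $C$, and a collar neighbourhood of $\partial_l$ in $X_l$ is isometric to a region $\{0\le\rho\le w\}$ in the standard hyperbolic cylinder $\mathbb{H}^2/\langle g_l\rangle$, with $g_l\in\mathrm{PSL}(2,\mathbb{R})$ hyperbolic of translation length $l$. I would work in the Fermi coordinates $(\rho,u)$ of this cylinder, where $\rho\ge 0$ is the distance to $\partial_l=\{\rho=0\}$ and $u\in\mathbb{R}/l\mathbb{Z}$ runs along the $\partial_l$-direction, so that
\[
ds^2=d\rho^2+\cosh^2\!\rho\,\,du^2 .
\]
Since for large $r$ the relevant region need not embed in $X_l$, and decoration curves are in any case defined through reconstruction, I would carry out the estimates on the universal cover (equivalently, for the developed image of $\gamma_l$), so that $\rho$ is allowed to range over all of $[0,\infty)$.

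From the metric above one reads off that the closed equidistant curve at distance $\rho$ from $\partial_l$ has length $\int_0^l\cosh\rho\,du=l\cosh\rho$; hence for $l<r$ the length-$r$ curve $\gamma_l$ sits at distance $\rho_l=\operatorname{arccosh}(r/l)\to\infty$, while the region between $\partial_l$ and $\gamma_l$ has area $l\sinh\rho_l=\sqrt{r^2-l^2}\to r$, so $\gamma_l$ stays in a part of the surface of bounded area. I would then rescale and recentre by setting $\bar u=u/l\in\mathbb{R}/\mathbb{Z}$ and $\rho=\rho_l+t$; using $\cosh\rho_l=r/l$ and $\sinh\rho_l=\sqrt{r^2-l^2}/l$ gives $l\cosh(\rho_l+t)=r\cosh t+\sqrt{r^2-l^2}\,\sinh t\to re^{t}$ uniformly for $t$ in compact sets, whence
\[
ds^2=dt^2+\bigl(l\cosh(\rho_l+t)\bigr)^2 d\bar u^2\ \longrightarrow\ dt^2+r^2e^{2t}\,d\bar u^2 .
\]
The limiting metric is exactly the cusp metric: under $y=e^{-t}/r$, $x=\bar u$ it becomes $(dx^2+dy^2)/y^2$ on $\mathbb{H}^2/\langle z\mapsto z+1\rangle$, in which $\{t=0\}=\{y=1/r\}$ is precisely the length-$r$ horocycle about $C$ (and, as a check, the truncated cusp $\{t\le 0\}$ has area $r$, matching $\lim_{l\to 0}\sqrt{r^2-l^2}$). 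In these coordinates $\gamma_l$ is the \emph{same} curve $\{t=0\}$ for every $l$ while the ambient metric converges to the cusp metric, so transporting this through the identification of a neighbourhood of $\partial_l$ in $X_l$ with a neighbourhood of $C$ in $X_0$ yields the asserted convergence.

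The two displayed computations are routine; the real work is to make the word ``converges'' precise and legitimate. Concretely, I expect the main obstacle to be (i) invoking, with an explicit reference, the classical continuity/degeneration statement that the structures $X_l$ converge to $X_0$ with a cusp at $v$ as $l\to 0$ — so that the collar of $X_l$ at $v$ genuinely converges to a cusp neighbourhood of $X_0$ — and (ii) fixing the topology in which the conclusion is stated (e.g.\ uniform convergence on compact sets of developing maps together with holonomies, or Gromov--Hausdorff convergence of truncated neighbourhoods of $v$) and verifying that the curves $\gamma_l$, which may only be immersed, converge in that sense. An equivalent and perhaps cleaner route, which I would present as an alternative, keeps everything in $\mathbb{H}^2$: take $g_l$, the holonomy of the loop around $v$, in a continuous family of hyperbolic elements degenerating to a parabolic $g_0$ fixing $\xi\in\partial\mathbb{H}^2$; the equidistant curve whose $g_l$-quotient has length $r$ develops to a round circle through the two endpoints of the axis of $g_l$, and as those endpoints collapse to $\xi$ this circle converges to the horocycle at $\xi$ whose $g_0$-quotient has length $r$.
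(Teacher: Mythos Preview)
Your argument is correct, but it is more elaborate than what the paper actually does. The paper's proof is essentially your ``alternative route'': it works directly in $\mathbb{H}^2$, placing the axis of the holonomy $h_l$ through $-1/l$ and $\infty$, checks explicitly that $h_l\to\begin{pmatrix}1&1\\0&1\end{pmatrix}$, computes via a line integral that the equidistant curve of quotient length $r$ is the Euclidean line $y=\tfrac{l}{\sqrt{r^2-l^2}}\,x+\tfrac{1}{\sqrt{r^2-l^2}}$, and observes that this converges to the horizontal line $y=1/r$, the horocycle of length $r$. No Fermi coordinates, no rescaled metric limit, and no discussion of the sense of convergence beyond this explicit equation-by-equation limit.

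Your Fermi-coordinate computation buys a more intrinsic picture (the metric $dt^2+r^2e^{2t}d\bar u^2$ on the limiting cusp, with $\gamma_l$ sitting at $t=0$ for every $l$) and makes the geometry of the degeneration transparent; it also yields the area check $\sqrt{r^2-l^2}\to r$ for free. The paper's approach is shorter and purely computational, at the cost of being tied to a particular upper-half-plane normalization. Your worries about (i) the continuity of the family $X_l$ and (ii) the topology in which $\gamma_l\to\gamma_0$ are legitimate, but the paper simply does not address them: ``convergence'' there is taken to mean pointwise convergence of the developed curves in $\mathbb{H}^2$ together with convergence of holonomies, exactly the minimal interpretation you describe in your alternative.
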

\begin{proof}
Lift $\partial_l$ and $\gamma_l$ to the upper half plane $\mathbb{H}^2$, and we can suppose that a lift $\widetilde{\partial_l}$ of $\partial_l$ and a lift $\widetilde{\gamma_l}$ of $\gamma_l$ have $-\frac{1}{l}$ and $\infty$ as the common endpoints (Figure \ref{Fig lim of eq dist curve}). 
Let $h_l$ denote the hyperbolic isometry of $\mathbb{H}^2$ corresponding to $v$ and 
$$h_l=
\begin{pmatrix}
e^{\frac{l}{2}}&\frac{1}{l}(e^{\frac{l}{2}}-e^{-\frac{l}{2}})\\
0&e^{-\frac{l}{2}}
\end{pmatrix}
\;\;\;\xrightarrow{l\to0}\;\;\;
\begin{pmatrix}
1&1\\
0&1
\end{pmatrix}
=:h_0. 
$$
The limit $h_0$ is the parabolic isometry of $\mathbb{H}^2$ corresponding to $v$ and the limit of $\partial_l$ is the cusp corresponding to $v$. 
\par
Let $\theta$ denote the angle of $\widetilde{\partial_l}$ and $\widetilde{\gamma_l}$. 
Take a parametrized curve $z(t)=(e^l-1)e^{i(\frac{\pi}{2}-\theta)}t+e^{i(\frac{\pi}{2}-\theta)}-\frac{1}{l}\;(0\le t\le1)$ and calculate $r$ by the line integral 
$$r=\int_0^1\frac{1}{y(t)}\sqrt{\left(\frac{dx}{dt}\right)^2+\left(\frac{dy}{dt}\right)^2}dt=\frac{l}{\cos\theta}. $$
Then, the curve $\widetilde{\gamma_l}$ is expressed as follows: 
$$\widetilde{\gamma_l}:y=\frac{l}{\sqrt{r^2-l^2}}x+\frac{1}{\sqrt{r^2-l^2}}\;\;\;\xrightarrow{l\to0}\;\;\;\widetilde{\gamma_0}:y=\frac{1}{r}. $$
Therefore, the limit curve of $\gamma_l$ is the projection $\gamma_0$ of $\widetilde{\gamma_0}$ and the length of $\gamma_0$ is $r$. 
\end{proof}
\begin{figure}[htbp]
\begin{center}
\includegraphics[width=14cm]{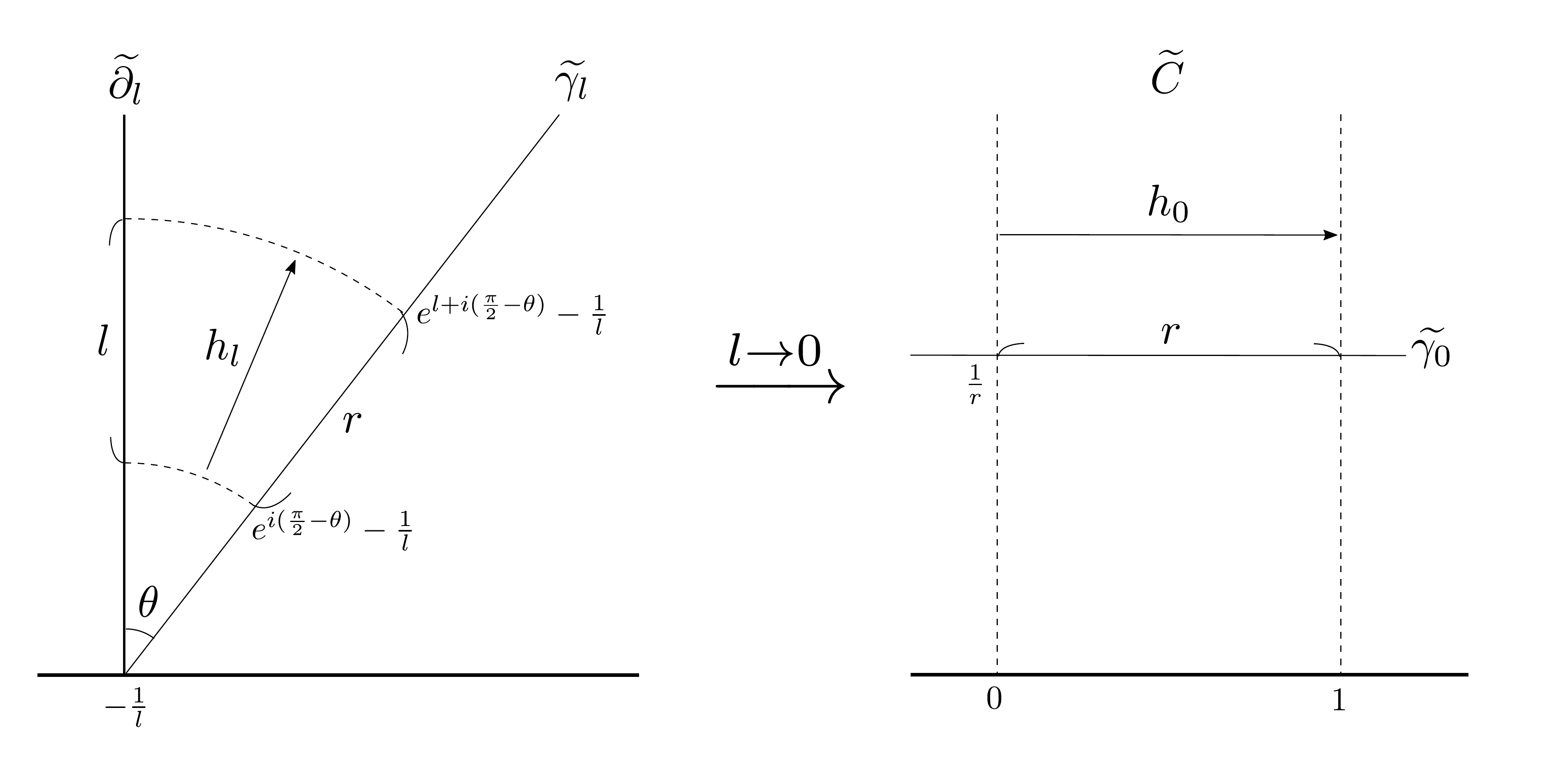}
\caption{The lift of an equidistant curve (left) and its limit (right). }
\label{Fig lim of eq dist curve}
\end{center}
\end{figure}
As seen from this proposition, equidistant curves have a role as the horocycles for the closed geodesic boundaries. 
For a vertex $v$, a {\it decoration curve} around $v$ is a horocycle around the cusp, a horocyclic arc around the spike or an equidistant curve around the closed geodesic boundary corresponding to $v$. 
\begin{dfn}\label{Def gene deco}(Generalized decorations)
Let $f$ be a marking of $X$, $\varepsilon$ be an enhancement of $X$, and $D=\bigcup_{v\in V}\gamma_v$ be the union of the decoration curves $\gamma_v$ around the cusps, the spikes or the closed geodesic boundaries corresponding to the vertices $v$. 
Then $D$ is called a {\it decoration} of $X$ and a quartet $(X, f, \varepsilon, D)$ is called a {\it decorated enhanced marked hyperbolic surface}. 
\end{dfn}
Decorated enhanced marked hyperbolic surfaces $(X_1, f_1, \varepsilon_1, D_1), (X_2, f_2, \varepsilon_2, D_2)$ are {\it decorated enhanced equivalent} and denoted by $(X_1, f_1, \varepsilon_1, D_1)\sim_{DE}(X_2, f_2, \varepsilon_2, D_2)$, if $(X_1, f_1, \varepsilon_1)\sim_{E}(X_2, f_2, \varepsilon_2)$ and $(X_1, f_1, D_1)\sim_{D}(X_2, f_2, D_2)$. 
Let us define the deformation space of the marked hyperbolic structures with the enhancement and the decoration on $S$. 
\begin{dfn}\label{Def deco enha Teich sp}(Decorated enhanced Teichm\"{u}ller spaces)
The set of decorated enhanced equivalence classes of decorated enhanced marked hyperbolic surfaces of $S$ is called the {\it decorated enhanced Teichm\"{u}ller space} of $S$, which is denoted by 
\begin{gather*}
\mathcal{T}^{ax}(S)=\{\mathrm{Decorated\ enhanced\ marked\ hyperbolic\ surfaces\ of\ }S\} /\sim_{DE}. 
\end{gather*}
\end{dfn}
Since the decoration is determined by the lengths of the decoration curves, there is an injection: 
$$\mathcal{T}^{ax}(S)\ni[X, f, \varepsilon, D]\mapsto([X, f, \varepsilon], (r_v)_{v\in V})\in\mathcal{T}^x(S)\times\mathbb{R}^V, $$
where $r_v$ is the length of the decoration curve corresponding to the vertex $v$. 
We give $\mathcal{T}^{ax}(S)$ the differential structure of the product structure of $\mathcal{T}^x(S)\times\mathbb{R}^V$. 

\subsection{Shear-decoration coordinates}
\noindent
The set of the decoration curves around a vertex $v$ is parametrized by their length, however this parameter has a lower bound which depends on the length of the boundary component corresponding to $v$. 
We define new parameters of the decoration and construct the generalized shear coordinates of the decorated enhanced Teichm\"{u}ller space. 
\par
Let $v$ be a vertex. 
If $v$ is a spike vertex, consider the doubled surface $S_D$, define the shear parameter of a boundary edge of $S$ as $0$, and define the shear parameter of the copy $e'$ of an interior edge $e$ of $S$ as the additive inverse of the shear parameter $x_e$ of $e$. 
Let $n$ denote the number of edges which have $v$ as an endpoint. 
Take an edge $e_1$ which has $v$ as an endpoint and take the edges $e_k$ inductively such that $e_{k+1}$ is the edge next to $e_k$ for counter-clockwise. 
Let $x_k$ denote the shear parameter of $e_k$, and $r_v$ denote the length of the decoration curve around $v$. 
\par
\begin{dfn}\label{Def deco para}(Decoration parameters)
the {\it decoration parameter} $d_v$ of $v$ is defined as follows. 
\begin{itemize}
\item If $v$ corresponds to a closed geodesic boundary, 
$$d_v=\frac{1}{2}\log\frac{\mathrm{min}_{1\le k\le n}\{r_k\}^2-l_v^2}{r_v^2-l_v^2}, $$
where $l_v$ is the length of the closed geodesic boundary corresponding to $v$ and 
$$r_k=\frac{l_v}{\exp(l_v)-1}\sqrt{(\sum_{i=1}^n\exp(\sum_{j=1}^{i-1}x_{k+j}))^2+(\exp(l_v)-1)^2}. $$
\item If $v$ corresponds to a cusp or a spike, 
$$d_v=\log\frac{\mathrm{min}_{1\le k\le n}\{r_k\}}{r_v}, $$
where 
$$r_k=\sum_{i=1}^n\exp(\sum_{j=1}^{i-1}x_{k+j}). $$
\end{itemize}
\end{dfn}
The cusp is thought of as the closed geodesic boundary whose length is $0$. 
We observe that 
$$\frac{l_v}{\mathrm{exp}(l_v)-1}\sqrt{(\sum_{i=1}^n\mathrm{exp}(\sum_{j=1}^{i-1}x_{k+j}))^2+(\mathrm{exp}(l_v)-1)^2}\xrightarrow{l_v\to 0}\sum_{i=1}^n\mathrm{exp}(\sum_{j=1}^{i-1}x_{k+j}) $$
and 
$$\frac{1}{2}\log\frac{\mathrm{min}_{1\le k\le n}\{r_k\}^2-l_v^2}{r_v^2-l_v^2}\xrightarrow{l_v\to 0}\log\frac{\mathrm{min}_{1\le k\le n}\{r_k\}}{r_v}. $$
By the decoration parameters, we obtain the generalized shear coordinates of the decorated enhanced Teichm\"{u}ller space. 
\par
\begin{thm}\label{Thm shear deco coord}(Shear-decoration coordinates)
Let $\Gamma$ be a triangulation of $S$. 
Then the decorated enhanced Teichm\"{u}ller space $\mathcal{T}^{ax}(S)$ of $S$ is parametrized by the shear parameters of the interior edges of $\Gamma$ and the decoration parameters around the vertices. 
Namely, the parameters give a homeomorphism: 
$$\psi_x : \mathcal{T}^{ax}(S)\rightarrow\mathbb{R}^{E^i(\Gamma)}\times\mathbb{R}^V. $$
\end{thm}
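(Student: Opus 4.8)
The plan is to factor $\psi_x$ through the already-understood decomposition $\mathcal{T}^{ax}(S)\hookrightarrow\mathcal{T}^x(S)\times\mathbb{R}^V$, which was used to give $\mathcal{T}^{ax}(S)$ its smooth structure. By Proposition~\ref{Prop shear coord}, the shear map $\varphi_x:\mathcal{T}^x(S)\to\mathbb{R}^{E^i(\Gamma)}$ is a diffeomorphism, so it suffices to show that, for each point of $\mathcal{T}^x(S)$ (equivalently, each tuple $(x_e)_{e\in E^i}$ of shear parameters, which also determines all the boundary lengths $l_v$), the assignment $(r_v)_{v\in V}\mapsto(d_v)_{v\in V}$ coming from Definition~\ref{Def deco para} is a homeomorphism from the admissible locus of decoration-length tuples onto all of $\mathbb{R}^V$, depending continuously on the $x_e$. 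Because $d_v$ depends only on $r_v$ and on the $x_e$ (not on the other $r_{v'}$), this reduces to a coordinate-wise statement: for each fixed vertex $v$ and each fixed shear data, the single-variable map $r_v\mapsto d_v$ is a homeomorphism from the interval of admissible lengths onto $\mathbb{R}$.

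First I would identify the admissible interval. For a vertex $v$ corresponding to a closed geodesic boundary of length $l_v>0$, an equidistant curve of length $r_v$ exists (with positive distance from the geodesic) exactly when $r_v>l_v$; as the distance ranges over $(0,\infty)$ the length ranges over $(l_v,\infty)$, and indeed from the computation in the proof of Proposition~\ref{Prop lim of eq dist curve} one reads off that the distance $\rho$ and the length satisfy a relation making $\tfrac12\log\frac{r^2-l_v^2}{(\text{const})}$ an affine function of $\rho$. Hence $r_v\mapsto\tfrac12\log\frac{\min_k\{r_k\}^2-l_v^2}{r_v^2-l_v^2}$ is continuous and strictly decreasing on $(l_v,\infty)$, tends to $+\infty$ as $r_v\downarrow l_v$, and tends to $-\infty$ as $r_v\to\infty$ (note $\min_k\{r_k\}>l_v$ always, which one checks from the displayed formula for $r_k$ since the sum of exponentials is positive). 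So it is a homeomorphism onto $\mathbb{R}$. For a cusp or spike vertex the admissible interval is $(0,\infty)$ — any horocycle length is realized — and $r_v\mapsto\log\frac{\min_k\{r_k\}}{r_v}$ is visibly a decreasing homeomorphism $(0,\infty)\to\mathbb{R}$. The limiting formulas displayed just before the theorem show these two cases match up continuously as $l_v\to 0$.

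The main obstacle, and the part needing genuine hyperbolic geometry rather than one-variable calculus, is verifying that the quantity $r_k$ appearing in Definition~\ref{Def deco para} is the correct geometric normalization: namely that $\min_{1\le k\le n}\{r_k\}$ is exactly the length of the \emph{largest embedded} decoration curve around $v$ — equivalently, the infimum, over the $n$ edges incident to $v$, of the length at which the decoration curve first becomes tangent to (or crosses) the opposite side, computed in terms of the shear parameters of the fan of triangles around $v$. This is where one must work in $\mathbb{H}^2$: lift the closed geodesic boundary $\partial_v$ (or the cusp/spike) and the $n$ triangles $T_1,\dots,T_n$ of the fan, use the shear parameters $x_k$ to place the base points along the successive edges, and compute the length of the equidistant/horocyclic arc cut off in $T_k$ as a function of the distance from $\partial_v$; the partial sums $\sum_{j=1}^{i-1}x_{k+j}$ are precisely the accumulated translation lengths along the lifted edges, which is why they appear inside the exponentials, and the factor $\frac{l_v}{\exp(l_v)-1}$ together with the $(\exp(l_v)-1)^2$ term under the square root comes from converting between the equidistant-curve geometry and the geodesic length $l_v$ (the same conversion seen, in the $n$-independent form, in Proposition~\ref{Prop lim of eq dist curve}). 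Once this geometric identity is in hand, the admissibility constraint $r_v<\min_k\{r_k\}$ (for an embedded decoration) is exactly the condition that makes $d_v$ well defined on the right interval, and the theorem follows by assembling the coordinate-wise homeomorphisms with $\varphi_x$ and invoking the product differential structure. Continuity of the inverse is automatic since each $d_v\mapsto r_v$ is an explicit continuous function of $d_v$ and the $x_e$.
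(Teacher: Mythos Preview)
Your first two paragraphs constitute a correct proof, and the approach is essentially the paper's own: factor through the shear parametrization of $\mathcal{T}^x(S)$ (Proposition~\ref{Prop shear coord}) and then observe that, for fixed shear data, each map $r_v\mapsto d_v$ is a monotone bijection from the interval of admissible decoration-curve lengths onto $\mathbb{R}$. The paper in addition computes the partial derivatives $\partial d_v/\partial r_{v'}$ to exhibit a lower-triangular Jacobian and conclude piecewise-diffeomorphism, but for the homeomorphism statement your monotonicity argument suffices.

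Your third paragraph, however, is both unnecessary and partly mistaken. The theorem takes Definition~\ref{Def deco para} as given; there is no need to verify any geometric interpretation of the constants $r_k$ to prove that $\psi_x$ is a homeomorphism---one only needs the formula, and the single-variable analysis you already carried out. (The paper does eventually interpret $r_k$ geometrically, in Proposition~\ref{Prop origin of deco para}, but after and independently of this theorem.) More importantly, there is no ``admissibility constraint $r_v<\min_k\{r_k\}$'' in this setup: the paper explicitly allows decoration curves that are not embedded, defining their length via a reconstruction procedure (see the discussion around Figure~\ref{Fig deco curve} and before Definition~\ref{Def gene deco}). Thus the domain of $r_v$ really is all of $(l_v,\infty)$ (resp.\ $(0,\infty)$), exactly as you used in your second paragraph; imposing $r_v<\min_k\{r_k\}$ would truncate the image of $d_v$ to $(0,\infty)$ and break surjectivity. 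Simply drop the third paragraph.
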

\begin{proof}
Let us regard $\mathcal{T}^{ax}(S)$ as the subset of $\mathbb{R}^{E^i(\Gamma)}\times\mathbb{R}^V$ by the shear parameters of the interior edges of $\Gamma$ and the lengths of the decoration curves around the vertices. 
We show that the map 
$$\psi_x : \mathcal{T}^{ax}(S)\ni((x_e)_{e\in E^i(\Gamma)}, (r_v)_{v\in V})\mapsto((x_e)_{e\in E^i(\Gamma)}, (d_v)_{v\in V})\in\mathbb{R}^{E^i(\Gamma)}\times\mathbb{R}^V $$
is a piecewise-diffeomorphism. 
\par
Let $v$ be a vertex. 
If $v$ is corresponding to a closed geodesic boundary, the range of the length of the decoration curve around $v$ is the interval $(l_v, \infty)$ and $d_v$ is the diffeomorphism from $(l_v, \infty)$ to $\mathbb{R}$, where $l_v$ is the length of the closed geodesic boundary corresponding to $v$. 
If $v$ is corresponding to a cusp or a spike, the range of the length of the decoration curve around $v$ is the interval $(0, \infty)$ and $d_v$ is the diffeomorphism from $(0, \infty)$ to $\mathbb{R}$. 
Therefore, $\psi_x$ is a bijection. 
\par
Consider the Jacobian matrix $D\psi_x$ of $\psi_x$. 
We can calculate the entries: 
$$\frac{\partial x_e}{\partial x_{e'}}=\left\{
\begin{array}{ll}
1&(e=e')\\
0&(e\neq e'), 
\end{array}
\right. $$
$$\frac{\partial x_e}{\partial r_v}=0, $$
\begin{equation}\label{Eq shear deco coord}
\frac{\partial d_v}{\partial r_{v'}}=\left\{
\begin{array}{ll}
-\frac{r_v}{r_v^2-l_v^2}&(v=v'\mathrm{\ corresponds\ to\ a\ closed\ geodesic\ boundary})\\
-\frac{1}{r_v}&(v=v'\mathrm{\ corresponds\ to\ a\ cusp\ or\ a\ spike})\\
0&(v\neq v'), 
\end{array}
\right. 
\end{equation}
and $D\psi_x$ is a lower triangular matrix. 
Since $\psi_x$ is bijective, it is a piecewise-diffeomorphism. 
\end{proof}
These coordinates are called the {\it shear-decoration coordinates}. 
\begin{remk}\label{Remk diff of psi}(Differentiability of $\psi_x$)
The decoration parameter introduced in Definition \ref{Def deco para} has a special origin for the theorem in \S 5.3. 
By changing $r_k$ into the value independent of the shear parameters, the map $\psi_x$ in Theorem \ref{Thm shear deco coord} becomes diffeomorphisms. 
\end{remk}

\subsection{Properties of decoration parameters}
\begin{prop}\label{Prop diff of l-len}(The differential of $\lambda$-lengths by decoration parameters)
Let $v$ be a vertex and $h$ be a half-edge which has $v$ as the endpoint. 
Then, 
$$\frac{da_h}{dd_v}=1. $$
\end{prop}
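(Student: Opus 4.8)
The plan is to keep the underlying decorated enhanced marked hyperbolic surface fixed except for the decoration curve $\gamma_v$ around $v$, to write $a_h$ explicitly as a function of the length $r_v$ of $\gamma_v$, and then to differentiate and compare with Definition \ref{Def deco para}. Moving $\gamma_v$ changes no shear parameter and no other decoration curve, so in the shear--decoration coordinates of Theorem \ref{Thm shear deco coord} the derivative $da_h/dd_v$ is taken with all $x_e$, all $r_{v'}$ ($v'\neq v$), the length $l_v$, and the whole configuration of edges and triangles around $v$ held fixed; in particular, writing $e$ for the edge containing $h$, the base point of $\widetilde e$ with respect to its triangle remains a fixed point of $\widetilde e$, since it depends only on $\widetilde e$ and the vertex of that triangle opposite $e$. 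It therefore suffices to show that $a_h = d_v + (\mathrm{const})$ with the constant independent of $r_v$.

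First suppose $v$ is a cusp or a spike. I lift $C_v$ to $\infty\in\partial\mathbb{H}^2$, so that $\widetilde e$ becomes a fixed vertical geodesic, its base point lies at a fixed height $y_b$, and $\widetilde{\gamma_v}$ becomes the horizontal line $y = c$ (for a spike, the horizontal segment between the two boundary geodesics asymptotic to $C_v$). Computing the length of $\gamma_v$ as in \S 2.3 gives $r_v = A/c$ with $A > 0$ depending only on the fixed data (the translation length of the holonomy at $v$, and for a spike an extra factor $2$ from counting a horocyclic arc as half a horocycle). Since $\widetilde{\gamma_v}\cap\widetilde e$ sits at height $c$, Definition \ref{Def l-length} gives $a_h = \pm(\log c - \log y_b)$, and the orientation convention there---the $\lambda$-length grows as the horoball recedes toward $C_v$---forces the sign for which $a_h = -\log r_v + (\mathrm{const})$. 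Since $d_v = \log(\min_{1\le k\le n}\{r_k\}) - \log r_v$ with $\min_k\{r_k\}$ depending only on the shear parameters, we get $a_h = d_v + (\mathrm{const})$, hence $da_h/dd_v = 1$.

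Now suppose $v$ corresponds to a closed geodesic boundary $\partial_v$ of length $l_v$. Here I would reuse the computation inside the proof of Proposition \ref{Prop lim of eq dist curve}: after lifting and translating so that the ideal endpoint shared by $\widetilde{\partial_v}$ and the edge carrying $h$ is placed at $0$, the geodesic $\widetilde{\partial_v}$ becomes the positive $y$-axis, the length-$r_v$ equidistant curve $\widetilde{\gamma_v}$ becomes the Euclidean ray from $0$ of slope $m = l_v/\sqrt{r_v^2 - l_v^2}$, and $\widetilde e$ becomes a fixed semicircle with an endpoint at $0$ (the spiralling direction fixed by the enhancement at $\partial_v$ affects only which ideal point gets this label, not the shape of the computation). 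Parametrizing the semicircle by its Euclidean angle $\alpha$ and using $ds = d\alpha/\sin\alpha$, one finds $\tan(\alpha/2) = m$ at the point $\widetilde{\gamma_v}\cap\widetilde e$, so the signed distance from this point to the fixed base point equals $\pm\log m$ plus an $r_v$-independent constant. Since $\log m = -\frac{1}{2}\log(r_v^2 - l_v^2) + (\mathrm{const})$ and $d_v = \frac{1}{2}\log\frac{\min_k\{r_k\}^2 - l_v^2}{r_v^2 - l_v^2}$ has its only $r_v$-dependence in the term $-\frac{1}{2}\log(r_v^2 - l_v^2)$, the orientation convention again yields $a_h = d_v + (\mathrm{const})$, so $da_h/dd_v = 1$; the cusp case is recovered in the limit $l_v\to 0$, consistently with the limits recorded right after Definition \ref{Def deco para}.

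The two coordinate computations are routine once Proposition \ref{Prop lim of eq dist curve} is in hand. The delicate point---and the one I expect to be the main obstacle---is the sign bookkeeping: checking that the orientation convention of Definition \ref{Def l-length}, together with the spiralling direction fixed by the enhancement at $\partial_v$, produces exactly $+1$ rather than $-1$, and that the origin built into Definition \ref{Def deco para} (the $\min_k\{r_k\}$ term) is precisely what makes the additive constant in $a_h = d_v + (\mathrm{const})$ free of $r_v$.
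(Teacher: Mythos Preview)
Your proposal is correct and follows essentially the same approach as the paper: in both cases one lifts to the upper half plane, locates the intersection of $\widetilde{\gamma_v}$ with $\widetilde e$, and compares the $r_v$-dependence of $a_h$ with that of $d_v$. The only cosmetic differences are that the paper works with infinitesimal increments $\bigtriangleup a_h$ and then compares $da_h/dr_v$ to $\partial d_v/\partial r_v$ from Equation~(\ref{Eq shear deco coord}), whereas you integrate once and compare $a_h$ and $d_v$ as functions of $r_v$ up to additive constants; and in the closed geodesic case the paper places the shared ideal endpoint at $\infty$ (so $\widetilde e$ is vertical and $\widetilde{\gamma_v}\cap\widetilde e$ sits at height $|x|/\tan\theta$), while you place it at $0$ (so $\widetilde e$ is a semicircle and you read off $\tan(\alpha/2)=m$), which is just a different M\"obius chart of the same picture.
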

\begin{proof}
First, suppose that $v$ is corresponding to a closed geodesic boundary $\partial_v$. 
Let $l_v$ denote the length of $\partial_v$ and $\gamma_v$ be an equidistant curve around $\partial_v$. 
Lift them to the upper half plane $\mathbb{H}^2$ such that a lift $\widetilde{\partial_v}$ of $\partial_v$ and a lift $\widetilde{\gamma_v}$ of $\gamma_v$ have $0$ and $\infty$ as their common endpoints (Figure \ref{Fig diff of l-len 1}). 
Let $w$ denote the distance from $\partial_v$ to $\gamma_v$, $r_v$ denote the length of $\gamma_v$, and $\theta$ denote the angle of $\widetilde{\partial_v}$ and $\widetilde{\gamma_v}$ at $0$. 
Take a parametrized curve $z(t)=e^{it}\;(\frac{\pi}{2}-\theta\le t\le\frac{\pi}{2})$ and calculate $w$ by the line integral 
\begin{equation}\label{Eq diff of l-len 1}
w=\int_{\frac{\pi}{2}-\theta}^{\frac{\pi}{2}}\frac{1}{y(t)}\sqrt{\left(\frac{dx}{dt}\right)^2+\left(\frac{dy}{dt}\right)^2}dt=\log\frac{1+\sin\theta}{\cos\theta}. 
\end{equation}
Take a parametrized curve $z(t)=(e^{l_v}-1)e^{i(\frac{\pi}{2}-\theta)}t+e^{i(\frac{\pi}{2}-\theta)}\;(0\le t\le1)$ and calculate $r$ by the line integral 
\begin{equation}\label{Eq diff of l-len 2}
r_v=\int_0^1\frac{1}{y(t)}\sqrt{\left(\frac{dx}{dt}\right)^2+\left(\frac{dy}{dt}\right)^2}dt=\frac{l_v}{\cos\theta}. 
\end{equation}
By Equations (\ref{Eq diff of l-len 1}) and (\ref{Eq diff of l-len 2}), it is immediately follows that
$$r_v=l_v\cosh w. $$
Let $g_e$ be the geodesic corresponding to the edge $e$ which has $h$ as a half-edge. 
There is a lift $\widetilde{g_e}$ of $g_e$ which has $\infty$ as an endpoint (Figure \ref{Fig diff of l-len 1}). 
Similarly, the other endpoint $x \in \mathbb{R}$ of $\widetilde{g_e}$ is corresponding to the other endpoint $v' \in V$ of $e$. 
$\widetilde{g_e}$ intersects $\widetilde{\gamma_v}$ at $x+\frac{|x|}{\tan\theta}i$. 
The $\lambda$-length $a_h$ of the half-edge $h$ is the length between the intersection and the base point. 
We consider the infinitesimal deformation of $r_v$. 
If $r_v$ increases, $\theta$ increases, the imaginary part of the intersection of $\widetilde{g_e}$ and $\widetilde{\gamma_v}$ decreases, and $a_h$ decreases. 
The change of the amount of $a_h$ is 
$$\bigtriangleup a_h=\log\frac{\tan\theta}{\tan(\theta+\bigtriangleup\theta)}=\frac{1}{2}\log\frac{r_v^2-l_v^2}{(r_v+\bigtriangleup r_v)^2-l_v^2}. $$
Then the differential of $a_h$ by $r$ is 
$$\frac{da_h}{dr_v}=-\frac{r_v}{r_v^2-l_v^2}. $$
\begin{figure}[htbp]
\begin{center}
\includegraphics[width=15cm]{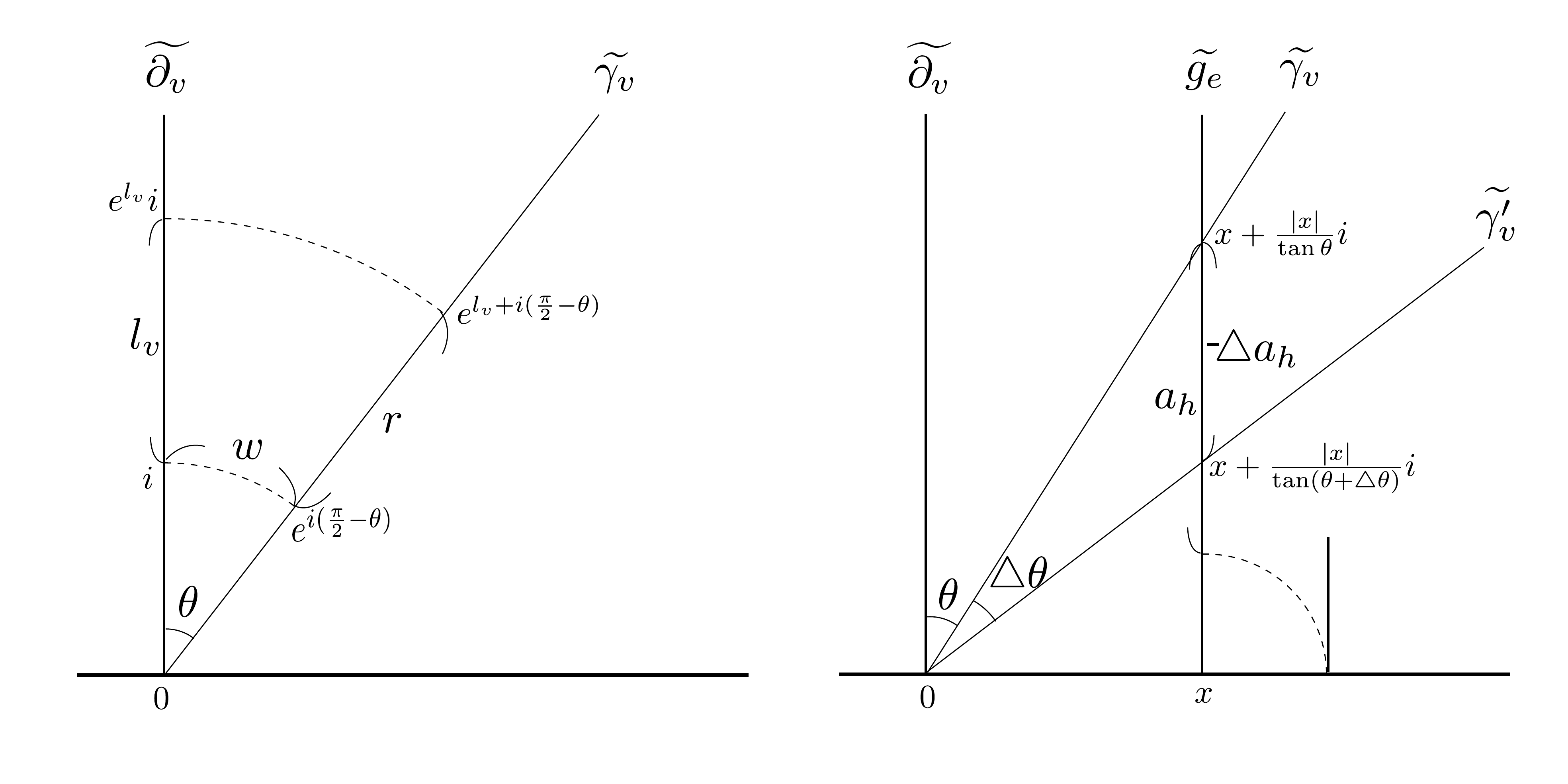}
\caption{The relation between $\bigtriangleup\theta$ and $\bigtriangleup a_h$ for the case of closed geodesic boundaries. }
\label{Fig diff of l-len 1}
\end{center}
\end{figure}
\par
Second, suppose that $v$ is corresponding to a cusp. 
Let $C_v$ denote the cusp corresponding to $v$ and $\gamma_v$ be a horocycle. 
Lift them to the upper half plane $\mathbb{H}^2$ such that a lift $\widetilde{C_v}$ of $C_v$ is at $\infty$ (Figure \ref{Fig diff of l-len 2}). 
Then there is a lift $\widetilde{\gamma_v}$ of $\gamma_v$ which is parallel to the real axis. 
Let $g_e$ be the geodesic corresponding to the edge $e$ which has $h$ as a half-edge. 
There is a lift $\widetilde{g_e}$ of $g_e$ which has $\infty$ as an endpoint and we can suppose that $\widetilde{g_e}$ and $f_v(\widetilde{g_e})$ have $0$ and $1$ as the other endpoint, respectively, where $f_v$ is the parabolic isometry of $\mathbb{H}^2$ corresponding to $C_v$. 
Let $y$ denote the imaginary part of $\widetilde{\gamma_v}$. 
Take a parameterized curve $z(t)=t+iy\;(0\le t\le1)$ and calculate the length $r_v$ of $\gamma_v$ by the line integral 
\begin{equation}\label{Eq diff of l-len 3}
r_v=\int_0^1\frac{1}{y(t)}\sqrt{\left(\frac{dx}{dt}\right)^2+\left(\frac{dy}{dt}\right)^2}dt=\frac{1}{y}. 
\end{equation}
We consider the $\lambda$-length $a_h$ of the half-edge $h$ for the infinitesimal deformation of $r_v$. 
If $r_v$ increases, the imaginary part $y$ of the intersection of $\widetilde{g_e}$ and $\widetilde{\gamma_v}$ decreases, and $a_h$ decreases. 
The change of the amount of $a_h$ is 
$$\bigtriangleup a_h=\log\frac{y+\bigtriangleup y}{y}=\log\frac{r_v}{r_v+\bigtriangleup r_v}. $$
Then the differential of $a_h$ by $r_v$ is 
$$\frac{da_h}{dr_v}=-\frac{1}{r_v}. $$
\par
Finally, suppose that $v$ is corresponding to a spike. 
This case is similar to the case of the cusps. 
Let $C_v$ denote the spike corresponding to $v$ and $\gamma_v$ be a horocyclic arc. 
Lift them to the upper half plane $\mathbb{H}^2$ such that a lift $\widetilde{C_v}$ of $C_v$ is at $\infty$ (Figure \ref{Fig diff of l-len 2}). 
Then there is a lift $\widetilde{\gamma_v}$ of $\gamma_v$ which is parallel to the real axis. 
Let $g_0$ and $g_1$ denote the infinite geodesic boundaries of $X$ ending at $C_v$, there are lifts $\widetilde{g_0}$ of $g_0$ and $\widetilde{g_1}$ of $g_1$ which have $\infty$ as an endpoint, and we can suppose that $\widetilde{g_0}$ and $\widetilde{g_1}$ have $0$ and $1$ as the other endpoint, respectively. 
Let $g_e$ be the geodesic corresponding to the edge $e$ which has $h$ as a half-edge, then there is a lift $\widetilde{g_e}$ of $g_e$ which has $\infty$ as an endpoint. 
Let $y$ be the imaginary part of $\widetilde{\gamma_v}$. 
Calculate the length $r$ of $\gamma_v$ by the line integral 
$$r_v=2\int_0^1\frac{1}{y(t)}\sqrt{\left(\frac{dx}{dt}\right)^2+\left(\frac{dy}{dt}\right)^2}dt=\frac{2}{y}. $$
By the same calculation as the case of the cusps, the differential of the $\lambda$-length $a_h$ of the half-edge $h$ by $r_v$ is 
$$\frac{da_h}{dr_v}=-\frac{1}{r_v}. $$
\par
\begin{figure}[htbp]
\begin{center}
\includegraphics[width=15cm]{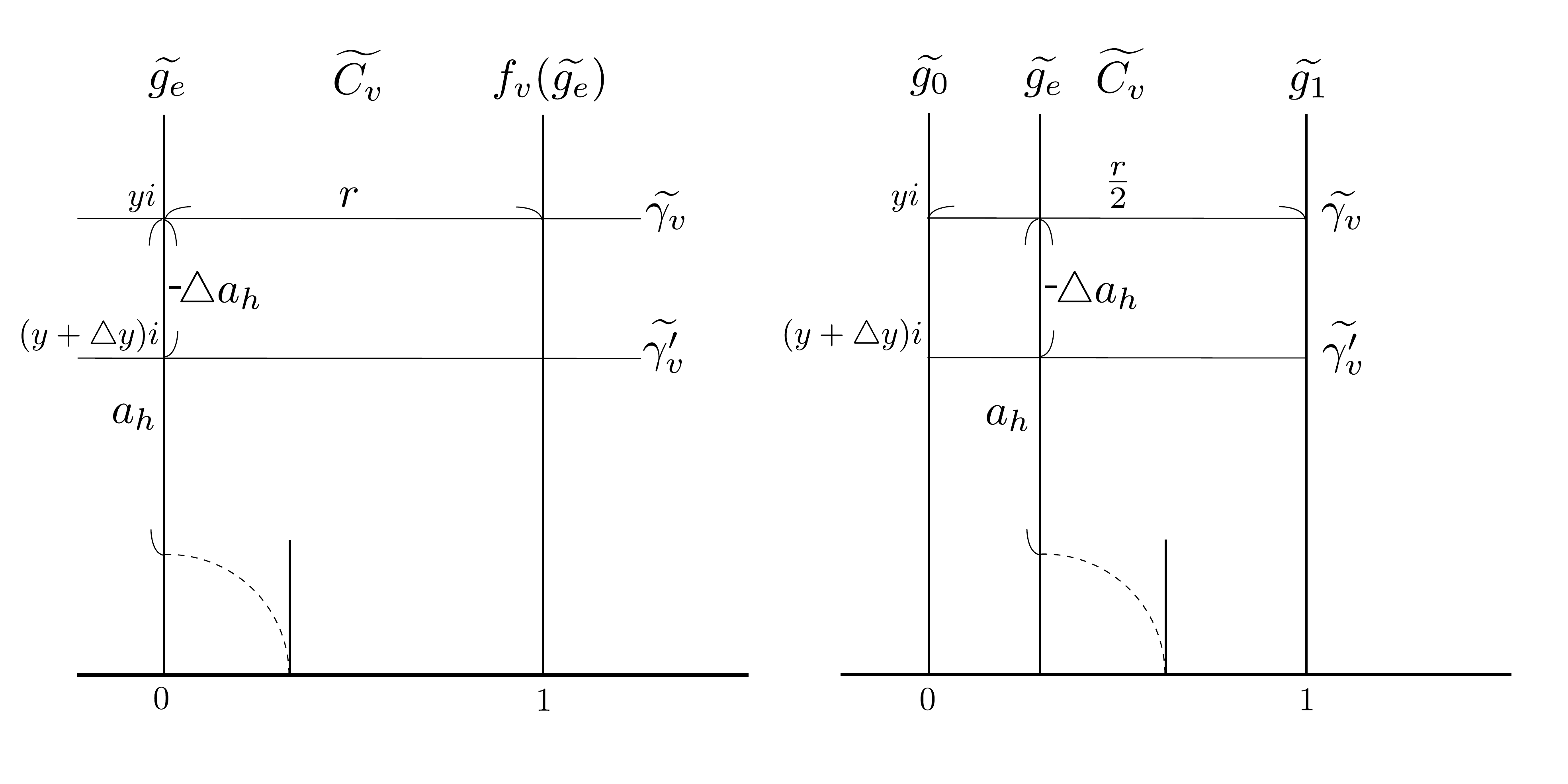}
\caption{Relation between $\bigtriangleup y$ and $\bigtriangleup a_h$ for the case of the cusps (left) and the spikes (right). }
\label{Fig diff of l-len 2}
\end{center}
\end{figure}
In comparison with Equation (\ref{Eq shear deco coord}), the assigned equation follows. 
\end{proof}
The $\lambda$-length $a_h$ of a half-edge $h$ which does not have a vertex $v$ as the endpoint is independent of the decoration parameter $d_v$ of $v$. 
Then, we have that 
$$\frac{da_h}{dd_v}=0$$
for such a half-edge $h$. 
\par
Finally, we research the origins of the decoration parameters. 
Decoration curves are the boundaries of the equidistant neighborhoods around closed geodesic boundaries, or the cusp neighborhoods around cusps or spikes. 
$r_k$ in Definition \ref{Def deco para} is the length of the boundary curve of the equidistant (or cusp) neighborhoods whose boundary curve passes through the base point of the triangle which has $e_k$ as a boundary. 
By Definition \ref{Def deco para}, the decoration parameter of $v$ is $0$ if and only if the length of the curve $r_v$ equals the shortest boundary curve of such neighborhoods. 
Consequently, following proposition follows. 
\begin{prop}\label{Prop origin of deco para}(Origins of decoration parameters)
Let $v$ be a vertex. 
Then, the following conditions are equivalent. 
\begin{itemize}
\item
$d_v(r_v)=0. $
\item
The decoration curve around $v$ is the boundary curve of the equidistant (or cusp) neighborhood which is intersection of the equidistant (or cusp) neighborhoods whose boundary curve passes through the base point of the triangle which has $v$ as a vertex. 
\end{itemize}
\end{prop}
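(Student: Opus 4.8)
The plan is to separate the equivalence into an elementary analytic observation about the formulas in Definition \ref{Def deco para} and a geometric identification of the auxiliary lengths $r_k$ appearing there. For the analytic step, write $l_v=0$ when $v$ corresponds to a cusp or a spike; then in all three cases $d_v(r_v)=\tfrac12\log\frac{(\min_{1\le k\le n}r_k)^2-l_v^2}{r_v^2-l_v^2}$, and every quantity involved lies in $(l_v,\infty)$: the length $r_v$ of the decoration curve by the range computation in the proof of Theorem \ref{Thm shear deco coord}, and each $r_k$ directly from its defining formula. Because $t\mapsto\tfrac12\log\frac{(\min_k r_k)^2-l_v^2}{t^2-l_v^2}$ is strictly decreasing on $(l_v,\infty)$ and vanishes only at $t=\min_k r_k$, we obtain $d_v(r_v)=0$ if and only if $r_v=\min_{1\le k\le n}r_k$.

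Next I would identify $r_k$ geometrically. Following the normalizations used in the proof of Proposition \ref{Prop diff of l-len}, I would lift $\partial_v$ (the cusp, spike, or closed geodesic corresponding to $v$) together with the incident edges $\widetilde{e_1},\dots,\widetilde{e_n}$ and the triangles $\widetilde{T_k}$ they cut out around $v$, and locate explicitly the base point attached to $e_k$ inside $\widetilde{T_k}$. The defining property of shear parameters (Definition \ref{Def shear para}) yields a recursion of the form $q_{k+1}=q_k\,e^{x_{k+1}}$ for appropriately normalized widths $q_k$ of these lifted triangles; running it once around $v$, where going around replaces $\widetilde{e_1}$ by its image under the deck transformation $f_v$, telescopes the widths and exhibits the reconstructed length of the equidistant (or cusp) curve through that base point as precisely the expression for $r_k$ in Definition \ref{Def deco para}. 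In the closed-geodesic case one additionally uses the identity $r=l_v\cosh w$ coming from Equations (\ref{Eq diff of l-len 1})--(\ref{Eq diff of l-len 2}) to convert between the distance $w$ of that curve from $\partial_v$ and its length. I expect this to be the main obstacle: the orientation conventions in Definition \ref{Def shear para}, the factor two in the reconstructed length of a horocyclic arc at a spike, and the spiralling case of an edge with $v$ at both endpoints (so that its shear enters the relevant sum twice) must all be tracked, and the telescoping must be made to work uniformly for all three types of $\partial_v$.

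Finally I would assemble the statement. The equidistant (or cusp) neighborhoods whose boundary curves pass through the base points of the triangles around $v$ all depend monotonically on a single real parameter (the distance to $\partial_v$, respectively the size of the cusp neighborhood), so they are totally ordered by inclusion and their common intersection is the smallest among them. Since the reconstructed length of the boundary curve is strictly increasing in the size of the neighborhood (by $r=l_v\cosh w$ in the closed-geodesic case, and by the inverse proportionality between a horocycle's length and its Euclidean height in the lift for cusps and spikes), the boundary of this intersection is exactly the curve of length $\min_{1\le k\le n}r_k$. A decoration curve around $v$ is by definition the boundary curve of one such neighborhood, of reconstructed length $r_v$; it therefore coincides with the boundary of the intersection if and only if $r_v=\min_k r_k$, and by the first step this holds if and only if $d_v(r_v)=0$, which is the asserted equivalence.
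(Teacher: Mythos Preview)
Your proposal is correct and follows essentially the same route as the paper: the paper's argument (given in the paragraph immediately preceding the proposition) simply asserts that each $r_k$ in Definition~\ref{Def deco para} is the reconstructed length of the equidistant (or cusp) boundary curve passing through the base point attached to $e_k$, observes from the defining formula that $d_v=0$ iff $r_v=\min_k r_k$, and identifies this minimum with the boundary of the intersection of the corresponding neighborhoods. Your write-up supplies considerably more detail for the geometric identification of $r_k$ than the paper does, but the strategy is the same.
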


\section{$\lambda$-lengths and boundary lengths}
\noindent
For an element of the decorated enhanced Teichm\"{u}ller space, the $\lambda$-length is defined as in the decorated Teichm\"{u}ller space and the signed boundary length is defined as in the enhanced Teichm\"{u}ller space. 
In this chapter, we study the relation between these parameters and the shear-decoration coordinates, and construct the generalized $\lambda$-length coordinates of the decorated enhanced Teichm\"{u}ller space. 

\subsection{Calculations from shear-decoration coordinates}
\noindent
In this section, we calculate the $\lambda$-lengths and the signed boundary lengths from the shear parameters and the decoration parameters. 
These calculations make the map from $\mathbb{R}^{E^i(\Gamma)}\times\mathbb{R}^V$ to $\mathbb{R}^{E(\Gamma)}\times\mathbb{R}^{V^i}$. 
\par
Let $v$ be a vertex. 
If $v$ is a spike vertex, consider the doubled surface $S_D$, define the shear parameter of a boundary edge of $S$ as $0$, and define the shear parameter of the copy $e'$ of an interior edge $e$ of $S$ as the additive inverse of the shear parameter $x_e$ of $e$. 
Let $n$ denote the number of edges which have $v$ as an endpoint. 
Take an edge $e_1$ which has $v$ as an endpoint and take the edges $e_k$ inductively such that $e_{k+1}$ is the edge next to $e_k$ for counter-clockwise. 
Let $x_k$ denote the shear parameter of $e_k$, let $r_v$ denote the length of the decoration curve around $v$, and let $g_k$ denote the geodesic corresponding to $e_k$. 
Lift these geodesics to the upper half plane $\mathbb{H}^2$ such that a lift $\widetilde{g_k}$ of $g_k$ has $\infty$ as an endpoint. 
\begin{lem}\label{Lem real part of endpt}(Real parts of endpoints)
For $1\le k\le n+1$, the other endpoint $u_k$ of $\widetilde{g_k}$ satisfies 
$$u_k=u_1+(u_{n+1}-u_1)\frac{\sum_{i=1}^{k-1}\exp(\sum_{j=1}^{i-1}x_{j+1})}{\sum_{i=1}^{n}\exp(\sum_{j=1}^{i-1}x_{j+1})}. $$
\end{lem}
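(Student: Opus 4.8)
The approach is to realize the fan of triangles around $v$ in the upper half plane as the ``unrolled'' picture in which every $\widetilde{g_k}$ is a vertical geodesic, to read each shear parameter off as the logarithm of a ratio of heights of feet of perpendiculars, and then to telescope. Note first that only $x_2,\dots,x_n$ enter the asserted formula: the summand $\exp(\sum_{j=1}^{i-1}x_{j+1})$ equals $\exp(x_2+\cdots+x_i)$, and $i$ ranges over $1,\dots,n$. Each of $e_2,\dots,e_n$ is an edge both of whose adjacent triangles belong to the fan $T_1,\dots,T_n$, so no holonomy bookkeeping will be needed, and the answer does not see whether $v$ is a cusp, a spike, or a closed geodesic boundary.

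\emph{Step 1 (set-up).} Lift $g_1,\dots,g_n$, together with $g_{n+1}$ (which is $g_1$ after one counter-clockwise turn around $v$, so $\widetilde{g_{n+1}}=h_v(\widetilde{g_1})$ for the holonomy $h_v$ around $v$, an isometry fixing $\infty$), so that each $\widetilde{g_k}$ is a vertical half-line with finite endpoint $u_k\in\mathbb{R}$ and ideal endpoint $\infty$. Then $\widetilde{T_k}$, the triangle between $\widetilde{g_k}$ and $\widetilde{g_{k+1}}$, has ideal vertices $\infty,u_k,u_{k+1}$; since consecutive triangles of the fan share an edge and do not overlap, $u_1,u_2,\dots,u_{n+1}$ are strictly monotone along $\mathbb{R}$. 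In particular the differences $\delta_k:=u_{k+1}-u_k$ ($1\le k\le n$) are all nonzero with a common sign.

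\emph{Step 2 (the shear parameter as a ratio of heights).} Fix $2\le k\le n$; the triangles adjacent to $e_k$ are $T_{k-1}$ and $T_k$. The base point of $\widetilde{g_k}$ with respect to $\widetilde{T_{k-1}}$ is the foot of the perpendicular from the opposite ideal vertex $u_{k-1}$; since the geodesics perpendicular to the vertical line $\widetilde{g_k}$ are precisely the semicircles centered at $u_k$, this foot lies at Euclidean height $|u_k-u_{k-1}|$. Likewise the base point with respect to $\widetilde{T_k}$ is the foot from $u_{k+1}$, at height $|u_{k+1}-u_k|$. The hyperbolic signed length along a vertical half-line from height $a$ to height $b$ is $\log(b/a)$, so, with the orientation convention of Definition \ref{Def shear para},
$$x_k=\log\frac{|u_{k+1}-u_k|}{|u_k-u_{k-1}|}=\log\frac{\delta_k}{\delta_{k-1}}\qquad(2\le k\le n),$$
the last equality holding because $\delta_{k-1},\delta_k$ share a sign. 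Equivalently $\delta_k=e^{x_k}\delta_{k-1}$.

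\emph{Step 3 (telescoping).} Iterating Step 2 gives $\delta_k=\delta_1\exp(x_2+\cdots+x_k)=\delta_1\exp(\sum_{j=1}^{k-1}x_{j+1})$, whence
$$u_k-u_1=\sum_{i=1}^{k-1}\delta_i=\delta_1\sum_{i=1}^{k-1}\exp\Bigl(\sum_{j=1}^{i-1}x_{j+1}\Bigr),\qquad u_{n+1}-u_1=\delta_1\sum_{i=1}^{n}\exp\Bigl(\sum_{j=1}^{i-1}x_{j+1}\Bigr).$$
Dividing the first equation by the second eliminates the undetermined scale $\delta_1$ and, after rearranging, yields exactly the claimed expression for $u_k$. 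For a spike vertex the identical argument is run in the doubled surface $S_D$, using the stated conventions (shear parameter $0$ on a boundary edge, and the negative of $x_e$ on the copy of an interior edge $e$), so nothing new is needed. The one delicate point is Step 2: the perpendicular-foot computation pins down $x_k$ only up to a global sign, and one must check against the orientation convention of Definition \ref{Def shear para} (or against Figure \ref{Fig shear para}) that the sign is $+$, i.e.\ that $\delta_k=e^{+x_k}\delta_{k-1}$ rather than $e^{-x_k}\delta_{k-1}$; Steps 1 and 3 are routine.
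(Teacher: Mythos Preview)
Your proposal is correct and follows essentially the same route as the paper: establish the recurrence $x_k=\log\dfrac{u_{k+1}-u_k}{u_k-u_{k-1}}$ from the definition of the shear parameter, then telescope to express $u_k-u_1$ and $u_{n+1}-u_1$ in terms of $u_2-u_1$ and divide. Your Step~2 spells out the foot-of-perpendicular computation that the paper leaves implicit (it simply writes the recurrence ``by definition''), and you are explicitly careful about the orientation/sign check, but the underlying argument is identical.
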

\begin{proof}
By definition, $x_k$ is the signed length between the base point of $\widetilde{g_k}$ with respect to the triangle which has $u_{k-1}$ as a vertex and the base point of $\widetilde{g_k}$ with respect to the triangle which has $u_{k+1}$ as a vertex. 
Then, for an integer $k$, 
\begin{equation}\label{Eq real part of endpt}
x_k=\log\frac{u_{k+1}-u_k}{u_k-u_{k-1}}. 
\end{equation}
This is the recurrence formula of $\{u_k\}_{k\in\mathbb{Z}}$, and 
\begin{eqnarray*}
u_{n+1}-u_1\!\!\!&=\!\!\!&\sum_{i=1}^n(u_{i+1}-u_i)\\
&=\!\!\!&\sum_{i=1}^n(u_2-u_1)\exp(\sum_{j=1}^{i-1}x_{j+1}), \\
u_k\!\!\!&=\!\!\!&u_1+\sum_{i=1}^{k-1}(u_{i+1}-u_i)\\
&=\!\!\!&u_1+\sum_{i=1}^{k-1}(u_2-u_1)\exp(\sum_{j=1}^{i-1}x_{j+1})\\
&=\!\!\!&u_1+(u_{n+1}-u_1)\frac{\sum_{i=1}^{k-1}\exp(\sum_{j=1}^{i-1}x_{j+1})}{\sum_{i=1}^{n}\exp(\sum_{j=1}^{i-1}x_{j+1})}. 
\end{eqnarray*}
\end{proof}
To claim the relation between the shear parameters and the signed boundary lengths, suppose that $v$ is corresponding to a closed geodesic boundary and $n$-valent. 
Let $\partial_v$ denote the closed geodesic boundary corresponding to $v$ and $l_v$ denote the signed length of $\partial_v$, where the sign of $l_v$ is the enhancement $\varepsilon(\partial_v)$ of $\partial_v$. 
Then, $f_v(z)=e^{l_v}z$ is the hyperbolic isometry of $\mathbb{H}^2$ corresponding to $v$. 
For $i\in\{0, 1\}$, $f_v(\widetilde{g_i})=\widetilde{g_{n+i}}$ and the geodesics $g_i$ and $g_{n+i}$ are identified. 
\begin{prop}\label{Prop b-len from shear}(Signed boundary lengths from shear parameters)
The shear parameters and the signed boundary lengths satisfy 
$$l_v=\sum_{i=1}^nx_i. $$
\end{prop}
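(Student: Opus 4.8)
The plan is to exploit the recurrence \eqref{Eq real part of endpt} from Lemma \ref{Lem real part of endpt}, but now track the full sequence $\{u_k\}_{k\in\mathbb Z}$ of real endpoints of the lifts $\widetilde{g_k}$ around $v$, and compare the situation one full loop apart. First I would fix the lift so that $\widetilde{g_1}$ has $\infty$ and $u_1$ as endpoints, and apply the hyperbolic isometry $f_v(z)=e^{l_v}z$ corresponding to $v$. Since going once around the closed geodesic boundary sends $\widetilde{g_k}$ to $\widetilde{g_{k+n}}$, we get $u_{k+n}=e^{l_v}u_k$ for all $k$ (after possibly normalizing so that the fixed points of $f_v$ are $0$ and $\infty$, which is exactly the normalization already set up before the proposition). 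In particular $u_{n+1}=e^{l_v}u_1$.

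Next I would feed this into Lemma \ref{Lem real part of endpt}, or rather into the telescoping identity in its proof: from \eqref{Eq real part of endpt} one has $u_{k+1}-u_k=(u_2-u_1)\exp\bigl(\sum_{j=1}^{k-1}x_{j+1}\bigr)$, hence summing over one period,
\[
u_{n+1}-u_1=(u_2-u_1)\sum_{i=1}^{n}\exp\Bigl(\sum_{j=1}^{i-1}x_{j+1}\Bigr).
\]
The key additional ingredient is that the same computation, shifted by one index, gives $u_{n+2}-u_2=(u_3-u_2)\sum_{i=1}^n\exp(\sum_{j=1}^{i-1}x_{j+2})$, and by $f_v$-equivariance $u_{n+2}-u_2=e^{l_v}(u_2-u_1)$ while $u_{n+1}-u_1=e^{l_v}(u_1-u_0)$, and so on. Taking the telescoped expression for $u_{n+1}-u_1$ and the analogous one for $u_{2n+1}-u_{n+1}$ — or more cleanly, taking the ratio of two consecutive "period" differences — and using \eqref{Eq real part of endpt} termwise, the product $\prod_{k=1}^{n}\dfrac{u_{k+1}-u_k}{u_k-u_{k-1}}$ telescopes to $\dfrac{u_{n+1}-u_n}{u_1-u_0}$, which by $f_v$-equivariance equals $e^{l_v}$. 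On the other hand that product equals $\exp\bigl(\sum_{k=1}^{n}x_k\bigr)$ by \eqref{Eq real part of endpt}. Taking logarithms yields $l_v=\sum_{k=1}^n x_k$.

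The cleanest write-up, then, is: telescope $\prod_{k=1}^n \exp(x_k)=\prod_{k=1}^n \dfrac{u_{k+1}-u_k}{u_k-u_{k-1}}=\dfrac{u_{n+1}-u_n}{u_1-u_0}$, then observe $u_{n+1}=f_v(u_1)$ and $u_n=f_v(u_0)$ so that this ratio is the derivative multiplier $e^{l_v}$ of $f_v$; conclude by taking logs. I expect the main obstacle to be bookkeeping around the normalization and the sign of $l_v$: one must check that the fixed point configuration of $f_v$ relative to the $u_k$'s is consistent with the orientation conventions fixed in Definition \ref{Def shear para}, so that $f_v$ is genuinely $z\mapsto e^{l_v}z$ with $l_v=\varepsilon(\partial_v)\cdot(\text{geometric length})$ rather than $e^{-l_v}z$; this is where the enhancement sign enters and where care is needed, but it is a finite check rather than a computation. (For the spike case the same argument runs on the doubled surface $S_D$, where the boundary edges carry shear parameter $0$ and interior-edge copies carry the negated parameters, so the corresponding "$l_v$" is forced to be $0$, consistent with a cusp.)
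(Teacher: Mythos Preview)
Your proposal is correct and, in its ``cleanest write-up'' form, is essentially identical to the paper's own proof: telescope $\prod_{k=1}^n e^{x_k}=\prod_{k=1}^n\frac{u_{k+1}-u_k}{u_k-u_{k-1}}=\frac{u_{n+1}-u_n}{u_1-u_0}$, then use $u_{n+1}=f_v(u_1)$, $u_n=f_v(u_0)$ with $f_v(z)=e^{l_v}z$ to identify the ratio as $e^{l_v}$. The paper also records the sign remark (that the $u_k$ are positive iff $\varepsilon(\partial_v)$ is positive) but otherwise proceeds exactly as you do.
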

\begin{proof}
The endpoint $u_k$ of the lift $\widetilde{g_k}$ of $g_k$ is positive if and only if $\varepsilon(\partial_v)$ is positive. 
By Equation (\ref{Eq real part of endpt}), 
\begin{eqnarray*}
e^{l_v}\!\!\!&=\!\!\!&\frac{e^{l_v}u_1-e^{l_v}u_0}{u_1-u_0}=\frac{f_v(u_1)-f_v(u_0)}{u_1-u_0}=\frac{u_{n+1}-u_n}{u_1-u_0}\\
&=\!\!\!&\prod_{i=1}^n\frac{u_{i+1}-u_i}{u_i-u_{i-1}}=\prod_{i=1}^ne^{x_i}=\exp(\sum_{i=1}^nx_i)
\end{eqnarray*}
and the assigned equation follows. 
\end{proof}
For a vertex $v$ corresponding to a cusp, the parabolic isometry $f_v$ of $\mathbb{H}^2$ corresponding to $v$ is the parallel translation. 
Then 
\begin{eqnarray*}
\exp(\sum_{i=1}^nx_i)&=\!\!\!&\prod_{i=1}^ne^{x_i}=\prod_{i=1}^n\frac{u_{i+1}-u_i}{u_i-u_{i-1}}\\
&=\!\!\!&\frac{u_{n+1}-u_n}{u_1-u_0}=\frac{f_v(u_1)-f_v(u_0)}{u_1-u_0}=1, 
\end{eqnarray*}
and Proposition \ref{Prop b-len from shear} holds. 
\begin{prop}\label{Prop l-len from shear and deco}($\lambda$-lengths from shear parameters and decoration parameters)
For a half-edge $h$ which has $v$ as the endpoint, the $\lambda$-length of $h$ is as follows: 
$$a_h=d_v+\log\frac{\sum_{i=1}^n\exp(\sum_{j=1}^{i-1}x_{j+1})}{\mathrm{min}_{1\le k\le n}\{\sum_{i=1}^n\exp(\sum_{j=1}^{i-1}x_{k+j})\}}, $$
where $d_v$ is the decoration parameter of $v$. 
Then, the $\lambda$-length of an edge $e$ is as follows: 
$$a_e=x_e+\!\!\sum_{h\in E^h(e)}\!\!a_h, $$
where $E^h(e)$ is the set of the half-edges of $e$. 
\end{prop}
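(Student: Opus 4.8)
The plan is to establish the half-edge formula by a direct computation in $\mathbb{H}^2$ in the normalization of Lemma \ref{Lem real part of endpt}, and then to read off the edge formula from the definitions. Fix a half-edge $h$ with endpoint $v$; relabelling the edges $e_1,\dots,e_n$ around $v$ cyclically, we may assume $h$ is a half-edge of $e_1$, so that, writing $\sigma_k:=\sum_{i=1}^n\exp(\sum_{j=1}^{i-1}x_{k+j})$, the assertion becomes $a_h=d_v+\log(\sigma_1/\min_k\sigma_k)$ and the minimum appearing is exactly the one entering Definition \ref{Def deco para}. In the lifts of Lemma \ref{Lem real part of endpt}, $v$ sits at $\infty$, each $\widetilde{g_k}$ is the vertical line through its real endpoint $u_k$, and the triangle between $e_1$ and $e_2$ has ideal vertices $\infty,u_1,u_2$; hence the base point of $\widetilde{g_1}$ with respect to that triangle, being the foot of the perpendicular dropped from $u_2$, lies at Euclidean height $|u_2-u_1|$ on $\widetilde{g_1}$, where $u_2-u_1=(u_{n+1}-u_1)/\sigma_1$ by Lemma \ref{Lem real part of endpt}. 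Since $a_h$ is the signed hyperbolic length along $\widetilde{g_1}$ from $\widetilde{g_1}\cap\widetilde{\gamma_v}$ to this base point, that is, a signed logarithm of the ratio of their Euclidean heights, it remains only to locate $\widetilde{g_1}\cap\widetilde{\gamma_v}$.

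This brings in the three types of $v$. If $v$ is a cusp, normalize so that $u_1=0$; then $f_v$ is translation by $u_{n+1}$, $\widetilde{\gamma_v}$ is horizontal at height $y=u_{n+1}/r_v$, and $a_h=\log(\sigma_1/r_v)$, which equals $d_v+\log(\sigma_1/\min_k\sigma_k)$ since $r_k=\sigma_k$. If $v$ is a closed geodesic boundary of signed length $l_v$, take the axis of $f_v(z)=e^{l_v}z$ along the imaginary axis, so that $u_{n+1}-u_1=(e^{l_v}-1)u_1$ while $\widetilde{\gamma_v}$ is the Euclidean ray from $0$ making an angle $\theta$ with the axis whose relation to $r_v$ (namely $r_v^2-l_v^2=l_v^2\tan^2\theta$) is the one computed in the proof of Proposition \ref{Prop diff of l-len}; then $\widetilde{g_1}\cap\widetilde{\gamma_v}$ lies at height $|u_1|\,|l_v|/\sqrt{r_v^2-l_v^2}$, and substituting yields $a_h=\log\!\big(\sigma_1|l_v|\,|e^{l_v}-1|^{-1}(r_v^2-l_v^2)^{-1/2}\big)$; since $r_k^2-l_v^2=l_v^2\sigma_k^2/(e^{l_v}-1)^2$ is increasing in $\sigma_k$, the minima of $r_k$ and of $\sigma_k$ occur at the same index and $d_v=\log\!\big(|l_v|\min_k\sigma_k\,|e^{l_v}-1|^{-1}(r_v^2-l_v^2)^{-1/2}\big)$, which gives the claim. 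The spike case is the cusp case on the doubled surface $S_D$, the factor $2$ in the length of a horocyclic arc and the zero/sign conventions for shear parameters of boundary and copied edges being already incorporated in Definition \ref{Def deco para}; as an independent check, Proposition \ref{Prop diff of l-len} confirms that the $d_v$-dependence of $a_h$ is as stated.

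For the edge formula, Definition \ref{Def l-length} presents $a_e$ as a sum of two signed lengths along $\widetilde{e}$ which, after orienting $\widetilde{e}$ by the boundary of one of the two triangles adjacent to $e$, equals the signed length along $\widetilde{e}$ from $\widetilde{\gamma_{v_3}}\cap\widetilde{e}$ to $\widetilde{\gamma_{v_1}}\cap\widetilde{e}$, where $v_1,v_3$ denote the endpoints of $e$. Inserting the two base points $b,b'$ of $\widetilde{e}$ with respect to these two triangles splits that length as $[\widetilde{\gamma_{v_3}}\cap\widetilde{e}\rightarrow b]+[b\rightarrow b']+[b'\rightarrow\widetilde{\gamma_{v_1}}\cap\widetilde{e}]$, whose three pieces are, respectively, the $\lambda$-length of the half-edge at $v_3$, the shear parameter $x_e$ (Definition \ref{Def shear para}), and the $\lambda$-length of the half-edge at $v_1$; hence $a_e=x_e+\sum_{h\in E^h(e)}a_h$. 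The main obstacle throughout is sign bookkeeping: every length above is signed by the orientation that a lifted triangle induces on $\widetilde{e}$ or on $\widetilde{g_1}$, and one must check that the orientation conventions of Definitions \ref{Def shear para} and \ref{Def l-length} are mutually consistent, so that all the implicit signs are ``$+$'' and the three pieces of the edge formula assemble without a stray sign. A secondary point requiring care is the spike reduction, where the counter-clockwise cyclic order of the edges at $v$ in $S_D$ and the sign reversal of shear parameters on the mirror copies must be tracked so that $\sigma_k$ and $\min_k$ computed in $S_D$ coincide with the corresponding quantities in $S$.
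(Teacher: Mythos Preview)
Your proof is correct and follows essentially the same approach as the paper's: a direct computation of $a_h$ in the upper half plane using the normalization of Lemma~\ref{Lem real part of endpt}, treated case by case according to the type of $v$, followed by the observation that $a_e$ decomposes along $\widetilde{e}$ into the shear parameter plus the two half-edge $\lambda$-lengths. The only cosmetic difference is that you reduce the spike case to the cusp case via the doubled surface $S_D$, whereas the paper handles cusps and spikes together by a direct computation with $u_1=0$, $u_{n+1}=1$; both routes are fine.
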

\begin{proof}
First, consider the case that $v$ is corresponding to a closed geodesic boundary $\partial_v$. 
Suppose that $h$ is a half-edge of $e_1$ in the beginning of this section. 
Lift the geodesic $g_k$ corresponding to $e_k$ to the upper half plane $\mathbb{H}^2$ as given in Lemma \ref{Lem real part of endpt}. 
We can suppose that a lift $\widetilde{\partial_v}$ of $\partial_v$ and a lift $\widetilde{\gamma_v}$ of the decoration curve $\gamma_v$ have the common endpoints $0$ and $\infty$, $u_1=\varepsilon(\partial_v)$ and $u_{n+1}=\varepsilon(\partial_v)e^{l_v}$, where $\varepsilon$ is the enhancement and $l_v$ is the signed length of $\partial_v$. 
Let $\theta$ denote the angle of $\widetilde{\partial_v}$ and $\widetilde{\gamma_v}$. 
By Definition \ref{Def deco para}, Equation (\ref{Eq diff of l-len 2}), Lemma \ref{Lem real part of endpt} and Proposition \ref{Prop b-len from shear}, 
\begin{eqnarray*}
a_h\!\!\!&=\!\!\!&\log\frac{|u_1|\frac{1}{\tan\theta}}{u_2-u_1}\\
&=\!\!\!&\log\frac{l_v\sum_{i=1}^n\exp(\sum_{j=1}^{i-1}x_{j+1})}{(\exp(l_v)-1)\sqrt{r_v^2-l_v^2}}\\
&=\!\!\!&d_v+\log\frac{\sum_{i=1}^n\exp(\sum_{j=1}^{i-1}x_{j+1})}{\mathrm{min}_{1\le k\le n}\{\sum_{i=1}^n\exp(\sum_{j=1}^{i-1}x_{k+j})\}}. 
\end{eqnarray*}
\par
The argument for the cases of  the cusps and the spikes are similar. 
Lift the geodesic $g_k$ corresponding to $e_k$ to the upper half plane $\mathbb{H}^2$ as given in Lemma \ref{Lem real part of endpt}, and we can suppose that $u_1=0$ and $u_{n+1}=1$. 
Let $y$ denote the imaginary part of the lift $\widetilde{\gamma}$ of the decoration curve $\gamma$. 
By Definition \ref{Def deco para}, Equation (\ref{Eq diff of l-len 3}) and Lemma \ref{Lem real part of endpt}, 
\begin{eqnarray*}
a_h\!\!\!&=\!\!\!&\log\frac{y}{u_2-u_1}\\
&=\!\!\!&\log\frac{\sum_{i=1}^n\exp(\sum_{j=1}^{i-1}x_{j+1})}{r_v}\\
&=\!\!\!&d_v+\log\frac{\sum_{i=1}^n\exp(\sum_{j=1}^{i-1}x_{j+1})}{\mathrm{min}_{1\le k\le n}\{\sum_{i=1}^n\exp(\sum_{j=1}^{i-1}x_{k+j})\}}. 
\end{eqnarray*}
\par
Finally, we research the calculation of $a_e$. 
By definition, the shear parameter $x_e$ of $e$ is the signed length between two base points of $e$, and two $\lambda$-lengths of half-edges are the signed lengths between the respective base point and the intersection with the respective decoration curve. 
Then, the sum of these three lengths equals the $\lambda$-length $a_e$. 
\end{proof}
By Proposition \ref{Prop b-len from shear} and \ref{Prop l-len from shear and deco}, we can construct the map 
$$\psi : \mathbb{R}^{E^i(\Gamma)}\times\mathbb{R}^V\ni((x_e)_{e\in E^i(\Gamma)}, (d_v)_{v\in V})\mapsto((a_e)_{e\in E(\Gamma)}, (l_v)_{v\in V^i})\in\mathbb{R}^{E(\Gamma)}\times\mathbb{R}^{V^i}, $$
where $(a_e)_{e\in E(\Gamma)}$ and $(l_v)_{v\in V^i}$ are the $\lambda$-lengths and signed boundary lengths of the decorated enhanced marked hyperbolic surface whose shear-decoration coordinates are $((x_e)_{e\in E^i(\Gamma)}, (d_v)_{v\in V})$. 

\subsection{$\lambda$-boundary-length coordinates}
\noindent
In this section, we calculate the shear parameters and the decoration parameters from the $\lambda$-lengths and the signed boundary lengths. 
These calculations make the inverse map from $\mathbb{R}^{E(\Gamma)}\times\mathbb{R}^{V^i}$ to $\mathbb{R}^{E^i(\Gamma)}\times\mathbb{R}^V$ and give $\mathcal{T}^{ax}(S)$ the generalized $\lambda$-length coordinates. 
\par
\begin{thm}\label{Thm l-b-len coord}($\lambda$-boundary-length coordinates)
For a triangulation $\Gamma_0$ whose puncture vertices are 1-valent, the decorated enhanced Teichm\"{u}ller space $\mathcal{T}^{ax}(S)$ of $S$ is parametrized by the $\lambda$-lengths of the edges of $\Gamma_0$ and the signed boundary lengths of the puncture vertices. 
Namely, the parameters give a homeomorphism: 
$$\psi_a : \mathcal{T}^{ax}(S)\rightarrow\mathbb{R}^{E(\Gamma_0)}\times\mathbb{R}^{V^i}. $$
\end{thm}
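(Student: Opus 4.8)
By construction, $\psi_a=\psi\circ\psi_x$, where $\psi_x\colon\mathcal T^{ax}(S)\to\mathbb R^{E^i(\Gamma_0)}\times\mathbb R^V$ is the homeomorphism of Theorem~\ref{Thm shear deco coord} and $\psi\colon\mathbb R^{E^i(\Gamma_0)}\times\mathbb R^V\to\mathbb R^{E(\Gamma_0)}\times\mathbb R^{V^i}$ is the map assembled from Propositions~\ref{Prop b-len from shear} and \ref{Prop l-len from shear and deco}. So it suffices to prove that $\psi$ is a homeomorphism when $\Gamma_0$ has only $1$-valent puncture vertices. The two spaces have the same dimension: on each crown the number of boundary edges equals the number of spikes, hence $|E^b(\Gamma_0)|=|V^b|$ and $|E(\Gamma_0)|+|V^i|=|E^i(\Gamma_0)|+|V|$. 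The map $\psi$ is continuous (indeed piecewise smooth, the only non-smoothness coming from the $\min_{1\le k\le n}\{\cdot\}$ terms of Definition~\ref{Def deco para} and Proposition~\ref{Prop l-len from shear and deco}); since a continuous injection $\mathbb R^N\to\mathbb R^N$ is open by invariance of domain, it is enough to produce a set-theoretic two-sided inverse, and then $\psi$, and hence $\psi_a$, is automatically a homeomorphism.

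To build the inverse I would use the $1$-valence hypothesis to peel off the data carried by the punctures. Each $1$-valent puncture vertex $v$ sits in a self-folded triangle $T_v$ with sides $e_v,e_v,f_v$, where $e_v$ is the unique edge at $v$, joining $v$ to a spike $w$, and $f_v$ is a loop at $w$. First, Proposition~\ref{Prop b-len from shear} with $n=1$ gives $x_{e_v}=l_v$, recovering the shear parameters of all radius edges directly from the signed boundary lengths. Next, since $v$ is $1$-valent the sum $\sum_{i=1}^{n}\exp(\sum_{j=1}^{i-1}x_{k+j})$ around $v$ equals $1$, so by Proposition~\ref{Prop l-len from shear and deco} the half-edge $\lambda$-length of $e_v$ at $v$ is exactly $d_v$; combining this with the relations coming from the definition of $\lambda$-lengths (Definition~\ref{Def l-length}) inside $T_v$ — namely $a_{e_v}=l_v+d_v+t_w$ and $a_{f_v}=2t_w$, where $t_w$ is the half-edge $\lambda$-length at $w$ within $T_v$ — yields the closed formula $d_v=a_{e_v}-\tfrac12 a_{f_v}$, expressed entirely in terms of given coordinates.

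It then remains to recover the shear parameters of the non-radius interior edges and the decoration parameters at the spikes. Here $1$-valence is used again: no non-radius edge is incident to a puncture, so every quadrilateral formed by two adjacent triangles not of self-folded type has all four vertices at spikes (or cusps), whose decoration curves are genuine horocyclic arcs; for such a quadrilateral the half-edge $\lambda$-lengths obey the classical additive triangle identities, and the shear parameter of the diagonal equals $\tfrac12$ times the alternating sum of the four side $\lambda$-lengths — all of which are among the given coordinates. This determines the remaining $x_e$. Finally, with every shear parameter in hand, the system $a_e=x_e+\sum_{h\in E^h(e)}a_h$ together with the equality of the two half-edge $\lambda$-lengths at each corner of a triangle determines every half-edge $\lambda$-length, and then Proposition~\ref{Prop l-len from shear and deco} reads off
$d_u=a_h-\log\frac{\sum_i\exp(\sum_j x_{j+1})}{\min_k\{\sum_i\exp(\sum_j x_{k+j})\}}$
for each spike $u$. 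The assignment $(a,l)\mapsto(x,d)$ thus constructed is defined on all of $\mathbb R^{E(\Gamma_0)}\times\mathbb R^{V^i}$, so $\psi$ is onto.

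The main obstacle is the bookkeeping needed to verify that this candidate inverse is globally well defined and genuinely inverts $\psi$ on both sides: one must check that the (seemingly over-determined) linear system for the half-edge $\lambda$-lengths is always consistent, that the formula $d_v=a_{e_v}-\tfrac12 a_{f_v}$ agrees with the $1$-valent specialization of Definition~\ref{Def deco para}, and that the argmin appearing in the $\min$-terms never obstructs the reconstruction — this last point being where continuity across the non-smooth loci matters, and precisely where the hypothesis on $\Gamma_0$ cannot be dropped (compare \S4.3). Once these consistency checks are in place, $\psi$ is a continuous bijection of $\mathbb R^{E^i(\Gamma_0)}\times\mathbb R^V$ onto $\mathbb R^{E(\Gamma_0)}\times\mathbb R^{V^i}$, hence a homeomorphism by invariance of domain, and therefore $\psi_a=\psi\circ\psi_x$ is a homeomorphism.
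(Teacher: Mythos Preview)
Your overall strategy coincides with the paper's: factor $\psi_a=\psi\circ\psi_x$ and prove that $\psi$ is a homeomorphism by writing down an explicit inverse from the $\lambda$-lengths and signed boundary lengths back to the shear and decoration parameters. The paper does exactly this, via Lemmas~\ref{Lem gap from shear}, \ref{Lem shear from l-len and gap} and Propositions~\ref{Prop shear from l-b-len}, \ref{Prop deco from shear and l-len}.

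There is, however, a genuine gap in your reconstruction of the shear parameters. You correctly recover $x_{e_v}=l_v$ for each radius edge, and you correctly note that for a non-radius interior edge whose two adjacent triangles are \emph{both} ordinary (not self-folded) the classical alternating-sum formula applies. But you never treat the loop edges $f_v$ that bound the self-folded triangles: the quadrilateral around $f_v$ always has the self-folded $T_v$ on one side, so it falls outside the class you analyse, and your sentence ``This determines the remaining $x_e$'' is not justified. In the paper this is precisely the second case of Proposition~\ref{Prop shear from l-b-len}: because the opposite vertex $v_2=v$ is a puncture with equidistant (not horocyclic) decoration, the ``gap'' $t_2$ of Lemma~\ref{Lem gap from shear} equals $l_v$ rather than $0$, and the correct formula is
\[
x_{f_v}=\tfrac12\bigl(a_{1,2}-a_{2,3}+a_{3,4}-a_{4,1}-l_v\bigr),
\]
which differs from the classical alternating sum by the extra $-\tfrac12 l_v$.

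The same issue surfaces in your formula for $d_v$. The claim $a_{f_v}=2t_w$ is not well-posed: the vertex $w$ occupies \emph{two} corners of $T_v$, and the two half-edge $\lambda$-lengths $s_1,s_2$ there need not agree (so ``the half-edge $\lambda$-length at $w$ within $T_v$'' is ambiguous); moreover your own equations $a_{e_v}=l_v+d_v+t_w$ and $a_{f_v}=2t_w$ would give $d_v=a_{e_v}-\tfrac12 a_{f_v}-l_v$, not $d_v=a_{e_v}-\tfrac12 a_{f_v}$. Specialising the paper's Proposition~\ref{Prop deco from shear and l-len} to the $1$-valent case yields instead
\[
d_v=a_{e_v}-\tfrac12 a_{f_v}-\tfrac12 l_v,
\]
again with an $l_v$-correction coming from the non-horocyclic decoration at $v$. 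Once you add the missing loop-edge case and correct this formula, your argument goes through; the use of invariance of domain is then unnecessary, since the inverse you build is already given by explicit continuous formulas.
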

These coordinates are called the {\it $\lambda$-boundary-length coordinates}. 
Theorem \ref{Thm l-b-len coord} is the main theorem of this paper. 
In the remains of this section, we prove this theorem. 
\par
Let $e$ be an interior edge. 
We research the relation between the shear parameter of $e$ and the $\lambda$-lengths. 
Let $T, T'$ be the triangles sharing $e$ as a boundary geodesic, $v_1, v_2, v_3$ be the vertices of $T$, $v_3, v_4, v_1$ be the vertices of $T'$, and $e_{i, j}$ be the boundaries of $T$ or $T'$ which have $v_i$ and $v_j$ as the endpoints. 
Conveniently, regard these subscripts as the numbers modulo $4$. 
Lift them to the upper half plane $\mathbb{H}^2$. 
Let $p_i$ denote the intersections of $\widetilde{e_{i, i+1}}$ and the lifts of the decoration curves around $v_i$. 
The gaps $t_i$ denote the signed lengths along $\widetilde{e_{i-1, i}}$ from the intersections with the horocyclic arcs which intersect $\widetilde{e_{i, i+1}}$ at $p_i$ to the intersections with the lifts of the decoration curves around $v_i$, where the directions respect to the orientations of the boundaries of $\widetilde{T}$ or $\widetilde{T'}$ (Figure \ref{Fig shear from l-len and gap}). 
\begin{lem}\label{Lem gap from shear}(Gaps from shear parameters)
For $1\le k\le 4$, let $n_k$ be the valence of $v_k$, let $x_{k, 1}$ be the shear parameter of $e_{k, k+1}$, and let $x_{k, j+1}$ be the shear parameters next to $x_{k, j}$ for counter-clockwise inductively. 
If $v_k$ is corresponding to a closed geodesic boundary and $k$ is even, 
$$t_k=x_{k, 2}+\log\frac{\sum_{i=1}^{n_k}\exp(\sum_{j=1}^{i-1}x_{k, j+2})}{\sum_{i=1}^{n_k}\exp(\sum_{j=1}^{i-1}x_{k, j+1})}. $$
If $v_k$ is corresponding to a closed geodesic boundary and $k$ is odd, 
$$t_k=x_{k, 2}+x_{k, 3}+\log\frac{\sum_{i=1}^{n_k}\exp(\sum_{j=1}^{i-1}x_{k, j+3})}{\sum_{i=1}^{n_k}\exp(\sum_{j=1}^{i-1}x_{k, j+1})}. $$
If $v_k$ is corresponding to a cusp or a spike, 
$$t_k=0. $$
\end{lem}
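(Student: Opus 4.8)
The plan is to lift the star of $v_k$ to $\mathbb{H}^2$ exactly as in the proof of Lemma~\ref{Lem real part of endpt}, so that the edges incident to $v_k$ are carried by vertical geodesics whose non-$\infty$ endpoints $u_1,u_2,\dots$ are the ones produced by Lemma~\ref{Lem real part of endpt} from the shear sequence $(x_{k,j})_j$ (the edge $e_{k,k+1}$ being lifted to the geodesic through $u_1$), and the common ideal endpoint $\infty$ plays the role of ``$v_k$''. Under this normalization, ``the horocyclic arc which intersects $\widetilde{e_{k,k+1}}$ at $p_k$'' is the horocycle about $\infty$ through $p_k$, i.e.\ the horizontal line at height $\operatorname{Im}(p_k)$. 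The cusp/spike case is then immediate: there $\widetilde{\gamma_k}$ is itself (a part of) such a horizontal line and passes through $p_k$, so it coincides with that horocyclic arc, the two points cut out on $\widetilde{e_{k-1,k}}$ coincide, and $t_k=0$.

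In the closed-geodesic cases I would normalize further so that $\widetilde{\partial_v}$ is the imaginary axis, $u_1=\varepsilon(\partial_v)$ and $u_{n_k+1}=\varepsilon(\partial_v)e^{l_v}$, as in the proof of Proposition~\ref{Prop l-len from shear and deco}. Then $\widetilde{\gamma_k}$ is a Euclidean ray from $0$ that meets each vertical geodesic $\{\operatorname{Re}=u\}$ at height $c\,|u|$ for a single constant $c$ (one has $c=\cot\theta$ with $r_v=l_v/\cos\theta$, in the notation of Proposition~\ref{Prop diff of l-len}), so in particular $\operatorname{Im}(p_k)=c\,|u_1|$. Hence, along $\widetilde{e_{k-1,k}}=\{\operatorname{Re}=u_m\}$, the horocyclic arc through $p_k$ and the decoration curve $\widetilde{\gamma_k}$ meet $\widetilde{e_{k-1,k}}$ at heights $c\,|u_1|$ and $c\,|u_m|$, and the signed length between these two points — measured in the direction which runs from the $v_{k-1}$ endpoint toward the $v_k=\infty$ endpoint of $\widetilde{e_{k-1,k}}$ — equals $\log(u_m/u_1)$. (The constant $c$, hence the actual decoration, cancels; the sign has to be checked, but it works out the same for $\varepsilon(\partial_v)=+1$ and $\varepsilon(\partial_v)=-1$.) Here $m$ is the position of $e_{k-1,k}$ in the counter-clockwise list of edges at $v_k$ that starts from $e_{k,k+1}$: if $k$ is even then $v_k$ lies in only one of $T,T'$, so $e_{k-1,k}$ is the next edge after $e_{k,k+1}$ and $m=2$; if $k$ is odd then $v_k$ is an endpoint of the diagonal $e=e_{1,3}$, which sits between $e_{k,k+1}$ and $e_{k-1,k}$ in the cyclic order, so $m=3$.

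It then remains to rewrite $\log(u_m/u_1)$ in terms of the $x_{k,j}$. Put $\sigma_t=\sum_{i=1}^{n_k}\exp(\sum_{j=1}^{i-1}x_{k,j+t})$. Lemma~\ref{Lem real part of endpt} gives $u_2/u_1=1+(e^{l_v}-1)/\sigma_1$ and $u_3/u_1=1+(e^{l_v}-1)(1+e^{x_{k,2}})/\sigma_1$, while a short telescoping of the exponential sums, using the periodicity $x_{k,n_k+j}=x_{k,j}$ and $\sum_{j=1}^{n_k}x_{k,j}=l_v$ (Proposition~\ref{Prop b-len from shear}), yields $e^{x_{k,2}}\sigma_2=\sigma_1-1+e^{l_v}$ and $e^{x_{k,2}+x_{k,3}}\sigma_3=\sigma_1-1-e^{x_{k,2}}+e^{l_v}(1+e^{x_{k,2}})$. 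Comparing the two pairs of identities gives $u_2/u_1=e^{x_{k,2}}\sigma_2/\sigma_1$ and $u_3/u_1=e^{x_{k,2}+x_{k,3}}\sigma_3/\sigma_1$, and taking logarithms produces exactly the asserted formulas in the even and odd cases.

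The hyperbolic computation here is short; the delicate step is the combinatorial bookkeeping — checking that, with the paper's fixed orientations of $\widetilde T$ and $\widetilde{T'}$, the edges $e_{k,k+1}$, then $e$ (when $k$ is odd), then $e_{k-1,k}$ really do occur consecutively and in this counter-clockwise order around $v_k$, and that the sign convention for the signed length defining $t_k$ agrees with the one used above, uniformly in the parity of $k$ and in $\varepsilon(\partial_v)=\pm1$. That is where I expect the only genuine friction.
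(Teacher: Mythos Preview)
Your proposal is correct and follows essentially the same route as the paper: the same upper-half-plane normalization with $\widetilde{\partial_v}$ the imaginary axis and $u_1=\varepsilon(\partial_v)$, the same identification $m=2$ (even) and $m=3$ (odd) for the position of $e_{k-1,k}$, and the reduction of $t_k$ to $\log|u_m|$ via Lemma~\ref{Lem real part of endpt}. The paper simply writes $t_k=\log\frac{|u_m|/\tan\theta}{1/\tan\theta}$ and quotes the resulting formula, whereas you spell out the telescoping identities $e^{x_{k,2}}\sigma_2=\sigma_1-1+e^{l_v}$ etc.\ that bridge $u_m/u_1$ and the $\sigma_t$; this is the same computation made explicit, not a different argument.
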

\begin{proof}
By definition, if $v_k$ is corresponding to a cusp or a spike, the decoration curve around $v_k$ is a horocyclic arc and the gap $t_k$ is $0$. 
\par
We consider the case of the closed geodesic boundaries. 
Lift the geodesics $g_j$ corresponding to the edge which gives the shear parameters $x_{k, j}$ to the upper half plane $\mathbb{H}^2$ as given in Lemma \ref{Lem real part of endpt}. 
We can suppose that a lift $\widetilde{\partial_k}$ of the boundary $\partial_k$ and a lift $\widetilde{\gamma_k}$ of the decoration curve $\gamma_k$ around $v_k$ have the common endpoints $0$ and $\infty$, $u_1=\varepsilon(\partial_v)$ and $u_{n+1}=\varepsilon(\partial_v)e^{l_v}$, where $\varepsilon$ is the enhancement and $l_v$ is the signed length of $\partial_v$. 
Let $\theta$ denote the angle of $\widetilde{\partial_v}$ and $\widetilde{\gamma_v}$. 
If $k$ is even, then $e_{k-1, k}$ corresponds to $g_2$ and 
\begin{eqnarray*}
t_k\!\!\!&=\!\!\!&\log\frac{|u_2|\frac{1}{\tan\theta}}{\frac{1}{\tan\theta}}\\
&=\!\!\!&x_{k, 2}+\log\frac{\sum_{i=1}^{n_k}\exp(\sum_{j=1}^{i-1}x_{k, j+2})}{\sum_{i=1}^{n_k}\exp(\sum_{j=1}^{i-1}x_{k, j+1})}. 
\end{eqnarray*}
If $k$ is odd, then $e_{k-1, k}$ corresponds to $g_3$ and 
\begin{eqnarray*}
t_k\!\!\!&=\!\!\!&\log\frac{|u_3|\frac{1}{\tan\theta}}{\frac{1}{\tan\theta}}\\
&=\!\!\!&x_{k, 2}+x_{k, 3}+\log\frac{\sum_{i=1}^{n_k}\exp(\sum_{j=1}^{i-1}x_{k, j+3})}{\sum_{i=1}^{n_k}\exp(\sum_{j=1}^{i-1}x_{k, j+1})}. 
\end{eqnarray*}
\end{proof}
\begin{lem}\label{Lem shear from l-len and gap}(Shear parameters from $\lambda$-lengths and gaps)
The shear parameter of $e$ is 
$$x_e=\frac{1}{2}(a_{1, 2}-a_{2, 3}+a_{3, 4}-a_{4, 1}+t_1-t_2+t_3-t_4), $$
where $a_{i, j}$ are the $\lambda$-lengths of $e_{i, j}$. 
\end{lem}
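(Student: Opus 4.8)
The plan is to carry out the whole computation in the universal cover and to reduce the general statement to the classical horocyclic case. Lift the quadrilateral $\widetilde T\cup\widetilde{T'}$ to $\mathbb{H}^2$ and normalize, by an orientation-preserving isometry, so that the diagonal $\widetilde e$ is the imaginary axis with $\widetilde{v_1}=0$ and $\widetilde{v_3}=\infty$; the orientations of $T$ and $T'$ then force $\widetilde{v_2}$ onto the positive real axis and $\widetilde{v_4}$ onto the negative real axis. A short computation shows that the base point of $\widetilde e$ with respect to $\widetilde T$ is $i\,\widetilde{v_2}$ (the perpendicular from $\widetilde{v_2}$ to $\widetilde e$ is the semicircle $|z|=\widetilde{v_2}$) and the one with respect to $\widetilde{T'}$ is $i\,|\widetilde{v_4}|$, so by Definition \ref{Def shear para} one has $x_e=\log\widetilde{v_2}-\log|\widetilde{v_4}|$ up to the sign convention fixed there. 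It therefore suffices to recover $\log\widetilde{v_2}-\log|\widetilde{v_4}|$ as the right-hand side of the asserted formula.

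First I would treat the \emph{reference configuration} obtained by replacing each decoration curve $\widetilde{\gamma_i}$ by the horocyclic arc $\eta_i$ centered at $v_i$ through the point $p_i=\widetilde{e_{i,i+1}}\cap\widetilde{\gamma_i}$; such an arc is unique, and when $v_i$ corresponds to a cusp or a spike one simply has $\eta_i=\widetilde{\gamma_i}$. Since the shear parameter $x_e$ depends only on the hyperbolic structure and the triangulation, and not on the decoration, the classical identification of the exponentiated shear with a cross-ratio of the four boundary $\lambda$-lengths \cite{Fock-Goncharov} — which here amounts to computing the intersection points $\eta_i\cap\widetilde{e_{i,i+1}}$ and $\eta_{i+1}\cap\widetilde{e_{i,i+1}}$ in the normalized picture, whereupon the Euclidean sizes of the $\eta_i$ cancel in the alternating sum — gives
$$x_e=\tfrac12\bigl(a^{\eta}_{1,2}-a^{\eta}_{2,3}+a^{\eta}_{3,4}-a^{\eta}_{4,1}\bigr),$$
where $a^{\eta}_{i,j}$ denotes the $\lambda$-length of $e_{i,j}$ computed from the arcs $\eta_i$ in place of the decoration curves.

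Next I would compare $a^{\eta}_{i,i+1}$ with the genuine $\lambda$-length $a_{i,i+1}$. On the edge $e_{i,i+1}$ the curves $\eta_i$ and $\widetilde{\gamma_i}$ meet $\widetilde{e_{i,i+1}}$ at the \emph{same} point $p_i$ by construction, while $\eta_{i+1}$ and $\widetilde{\gamma_{i+1}}$ meet $\widetilde{e_{i,i+1}}$ at two points whose signed separation is exactly the gap $t_{i+1}$; this is precisely the definition of $t_{i+1}$ (see Figure \ref{Fig shear from l-len and gap}), since $e_{i,i+1}$ is the edge along which $t_{i+1}$ is measured and $\eta_{i+1}$ is the horocyclic arc through $p_{i+1}=\widetilde{e_{i+1,i+2}}\cap\widetilde{\gamma_{i+1}}$. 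Hence $a^{\eta}_{i,i+1}=a_{i,i+1}-t_{i+1}$, and substituting into the reference formula and relabelling yields
\begin{align*}
x_e&=\tfrac12\bigl((a_{1,2}-t_2)-(a_{2,3}-t_3)+(a_{3,4}-t_4)-(a_{4,1}-t_1)\bigr)\\
&=\tfrac12\bigl(a_{1,2}-a_{2,3}+a_{3,4}-a_{4,1}+t_1-t_2+t_3-t_4\bigr),
\end{align*}
which is the claim. Alternatively one can skip the reduction and instead substitute the shear expressions for $a_{i,j}$ and $t_i$ coming from Propositions \ref{Prop b-len from shear} and \ref{Prop l-len from shear and deco} and Lemma \ref{Lem gap from shear} and verify the resulting identity directly in the shear parameters, the decoration parameters cancelling in the alternating sum.

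I expect the main obstacle to be the orientation and sign bookkeeping behind the two substitutions. One must check that in the reference formula the alternating signs are attached to the correct edges, which comes down to tracking the boundary orientations of $\widetilde T$ and $\widetilde{T'}$ through Definition \ref{Def l-length} as in the proof of Proposition \ref{Prop l-len from shear and deco}; that on $e_{i,i+1}$ it is the $v_i$-end, not the $v_{i+1}$-end, whose intersection with $\widetilde{\gamma_i}$ is preserved, which rests on the cyclic order of $e_{i-1,i}$, $e$ and $e_{i,i+1}$ in the edge-fan around $v_i$ for $i=1,3$ and on the fact that a gap is measured along the preceding edge; and that the comparison $a^{\eta}_{i,i+1}=a_{i,i+1}-t_{i+1}$ carries precisely the stated sign, in particular when $v_{i+1}$ corresponds to a closed geodesic boundary, so that $\widetilde{\gamma_{i+1}}$ is a hypercycle rather than a horocycle and the gap is genuinely nonzero. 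Once these conventions are pinned down — most safely by redoing the intersection computations explicitly in the normalized picture above — the identity follows, the degenerate case in which $\widetilde{\gamma_i}$ meets $e_{i-1,i}$ only after reconstruction being handled by the same formulas applied on the glued half-planes.
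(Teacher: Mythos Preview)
Your argument is correct, but the paper's proof is different and considerably more direct. The paper does not pass through an auxiliary horocyclic decoration or invoke the classical cross-ratio formula at all. Instead it defines, for each $k$, the quantity $a_k$ as the signed length on $\widetilde{e_{k,k+1}}$ from the base point (with respect to the triangle $T$ or $T'$ containing that edge) to the intersection with the decoration curve at the $v_k$-end, and then writes each $\lambda$-length as a two-term sum:
\[
a_{1,2}=a_1+(a_2+t_2),\quad a_{2,3}=a_2+(a_3+t_3-x_e),\quad a_{3,4}=a_3+(a_4+t_4),\quad a_{4,1}=a_4+(a_1+t_1-x_e).
\]
The extra $-x_e$ in $a_{2,3}$ and $a_{4,1}$ is the only subtle point: it appears because the ``second half'' of those edges is measured from the base point on $\widetilde e$ with respect to the \emph{other} triangle, so the shear shows up as the discrepancy between the two base points on the diagonal. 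Taking the alternating sum, every $a_k$ cancels and one is left with $2x_e$ and the gap terms.

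Your route---reduce to horocycles via the reference arcs $\eta_i$, apply the classical identity, then correct by $a^{\eta}_{i,i+1}=a_{i,i+1}-t_{i+1}$---is conceptually clean, since it isolates exactly where the non-horocyclic decoration enters. The cost is that you import an external result and must verify the sign conventions in two separate places (the classical formula and the gap comparison), as you yourself note. The paper's proof is self-contained and amounts to four lines once the decomposition is written down; its one nontrivial step is recognizing where the $-x_e$ terms belong, which your approach hides inside the appeal to the classical formula.
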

\begin{proof}
For $1\le k\le 4$, the $\lambda$-length $a_{k, k+1}$ of $e_{k, k+1}$ is the sum of two signed lengths from the base points of $\widetilde{T}$ or $\widetilde{T'}$ on $\widetilde{e_{k, k+1}}$ to the intersections with the lifts of the decoration curves around $v_k$ and $v_{k+1}$. 
Let $a_k$ denote the first term of this sum $a_{k, k+1}$. 
Then, 
\begin{eqnarray*}
a_{1, 2}\!\!\!&=\!\!\!&a_1+(a_2+t_2), \\
a_{2, 3}\!\!\!&=\!\!\!&a_2+(a_3+t_3-x_e), \\
a_{3, 4}\!\!\!&=\!\!\!&a_3+(a_4+t_4), \\
a_{4, 1}\!\!\!&=\!\!\!&a_4+(a_1+t_1-x_e), 
\end{eqnarray*}
and the assigned equation follows. 
\end{proof}
\begin{figure}[htbp]
\begin{center}
\includegraphics[width=15cm]{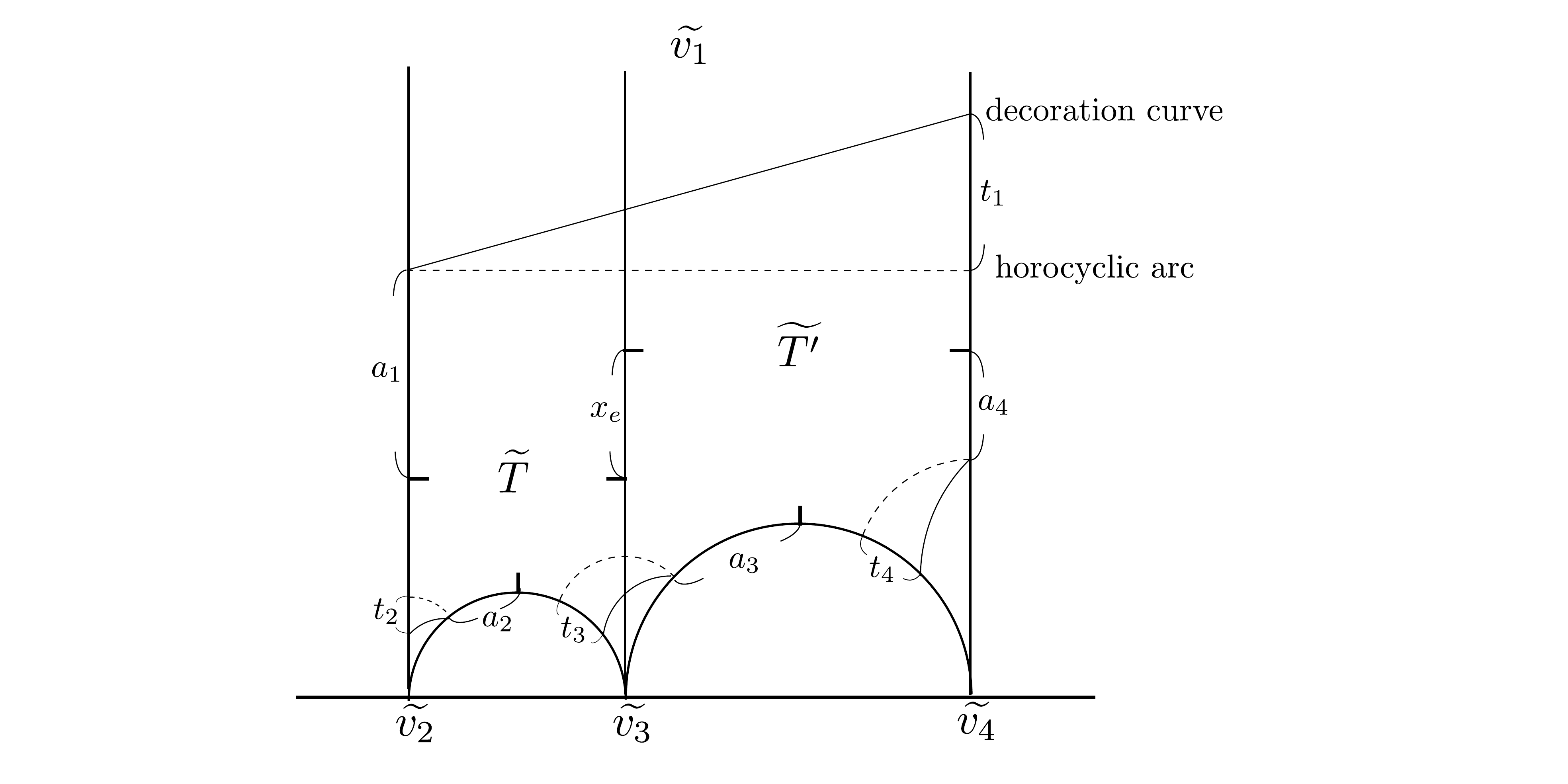}
\caption{Gaps between the horocyclic arcs and the decoration curves. }
\label{Fig shear from l-len and gap}
\end{center}
\end{figure}
Lemma \ref{Lem shear from l-len and gap} gives the relation between the shear parameters and the $\lambda$-lengths, however we need to solve the equations because the gaps $t_k$ are expressed by the shear parameters. 
By Proposition \ref{Prop b-len from shear} and Lemma \ref{Lem gap from shear}, $t_k$ are $0$ if $v_k$ are corresponding to cusps or spikes, and $t_k$ are the signed boundary lengths of $v_k$ if the valences of $v_k$ are $1$. 
The equations are solvable for the shear parameters when the triangulation is special. 
\begin{prop}\label{Prop shear from l-b-len}(Shear parameters from $\lambda$-lengths and signed boundary lengths)
Suppose that any vertex of $V^i$ is 1-valent, then it is classified into three cases: 
\begin{itemize}
\item $v_1$ is a puncture vertex, $v_2, v_3, v_4$ are spike vertices, and
$$x_e=\frac{1}{2}(a_{1, 2}-a_{2, 3}+a_{3, 4}-a_{4, 1}+2l_1), $$
where $l_1$ is the signed boundary length of $v_1$. 
\item $v_2$ is a puncture vertex, $v_1, v_3, v_4$ are spike vertices, and
$$x_e=\frac{1}{2}(a_{1, 2}-a_{2, 3}+a_{3, 4}-a_{4, 1}-l_2), $$
where $l_2$ is the signed boundary length of $v_2$. 
\item $v_1, v_2, v_3, v_4$ are spike vertices, and
$$x_e=\frac{1}{2}(a_{1, 2}-a_{2, 3}+a_{3, 4}-a_{4, 1}). $$
\end{itemize}
\end{prop}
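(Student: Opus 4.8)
The plan is to obtain the three formulas directly from Lemma~\ref{Lem shear from l-len and gap}, which already gives
$$x_e=\frac{1}{2}\bigl(a_{1,2}-a_{2,3}+a_{3,4}-a_{4,1}+t_1-t_2+t_3-t_4\bigr),$$
by evaluating each gap $t_k$ under the standing hypothesis that every vertex of $V^i$ is $1$-valent. By Lemma~\ref{Lem gap from shear}, $t_k=0$ as soon as $v_k$ corresponds to a cusp or a spike, so only the gaps at puncture vertices can contribute. If $v_k$ is a puncture vertex then $n_k=1$, and plugging $n_k=1$ into the two remaining cases of Lemma~\ref{Lem gap from shear} kills the logarithmic terms, since both numerator and denominator collapse to $\exp(0)=1$. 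What survives is $t_k=x_{k,2}$ when $k$ is even and $t_k=x_{k,2}+x_{k,3}$ when $k$ is odd; and since $v_k$ has a unique incident edge, each $x_{k,j}$ appearing here is the shear parameter of that single edge, which by Proposition~\ref{Prop b-len from shear} (taken with $n=1$) equals the signed boundary length $l_k$. Hence $t_k=l_k$ at an even puncture vertex and $t_k=2l_k$ at an odd puncture vertex.

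Next I would settle the combinatorial input: that at most one of $v_1,v_2,v_3,v_4$ is a puncture vertex and that the rest are spikes. A $1$-valent puncture vertex among them forces the edges of $T\cup T'$ incident to it to coincide, so the triangle of $\Gamma$ containing that puncture is self-folded; tracing the resulting identifications of the $v_i$ and counting incidences shows that any further puncture among $v_1,\dots,v_4$ would be incident to two distinct edges, contradicting $1$-valence, the one escaping configuration being ruled out because it would collapse $S$ onto a twice-punctured sphere, which carries no spike. Using that $x_e$ and the right-hand side of Lemma~\ref{Lem shear from l-len and gap} are both invariant under the relabelling $(v_1,v_2,v_3,v_4)\mapsto(v_3,v_4,v_1,v_2)$ that interchanges $T$ and $T'$, the puncture vertex, when present, may be placed at $v_1$ or $v_2$. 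This leaves exactly the three listed cases, with gap data $(t_1,t_2,t_3,t_4)$ equal to $(2l_1,0,0,0)$, $(0,l_2,0,0)$, or $(0,0,0,0)$ respectively.

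Finally I would substitute these gaps into the formula of Lemma~\ref{Lem shear from l-len and gap}: the added term is $\frac{1}{2}(2l_1)=l_1$ in the first case, $-\frac{1}{2}l_2$ in the second (the minus sign coming from the $-t_2$ in that formula), and nothing in the third, which is precisely the assertion. I expect the genuine obstacle to be the middle step: making the degeneration of $T\cup T'$ near a $1$-valent puncture vertex precise, confirming that the shear parameters that enter Lemma~\ref{Lem gap from shear} are exactly the ones collapsing to $l_k$, and getting the factor $2$ versus $1$ (odd versus even position) and the overall sign right. Once that local picture is pinned down, only the routine substitution remains.
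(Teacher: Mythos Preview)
Your proposal is correct and follows essentially the same route as the paper: start from Lemma~\ref{Lem shear from l-len and gap}, evaluate the gaps via Lemma~\ref{Lem gap from shear} (zero at spikes, and at a $1$-valent puncture the logarithms collapse so that only the shear terms $x_{k,2}$ or $x_{k,2}+x_{k,3}$ survive, each equal to $l_k$ by Proposition~\ref{Prop b-len from shear}), and substitute. The paper's proof is terser on the combinatorics---it simply reads off the identifications $e=e_{1,2}=e_{4,1}$, $v_2=v_3=v_4$ (first case) or $e_{1,2}=e_{2,3}$, $v_1=v_3$ (second case) from $1$-valence and notes the remaining vertex has valence $>1$, without explicitly arguing exhaustiveness or invoking the $(v_1,v_2,v_3,v_4)\mapsto(v_3,v_4,v_1,v_2)$ symmetry---but the substance is the same.
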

\begin{proof}
First, suppose that $v_1$ is a  puncture vertex. 
By the assumption, the valence of $v_1$ is $1$, then $e=e_{1, 2}=e_{4, 1}$, $v_2=v_3=v_4$ and $e\neq e_{2, 3}=e_{3, 4}$. 
$v_2$ is a spike vertex since the valence of $v_2$ is more than $1$. 
By Proposition \ref{Prop b-len from shear} and Lemmas \ref{Lem gap from shear} and \ref{Lem shear from l-len and gap}, $t_1=2x_e=2l_1$, $t_2=t_3=t_4=0$ and the assigned equation follows. 
\par
Second, suppose that $v_2$ is a  puncture vertex. 
By the assumption, the valence of $v_2$ is $1$, then $e_{1, 2}=e_{2, 3}$ and $v_1=v_3$. 
$v_1$ is a spike vertex since the valence of $v_1$ is more than $1$. 
By Proposition \ref{Prop b-len from shear} and Lemmas \ref{Lem gap from shear} and \ref{Lem shear from l-len and gap}, $t_2=x_{1, 2}=l_2$ where $x_{1, 2}$ is the shear parameter of $e_{1, 2}$, $t_1=t_3=t_4=0$ and the assigned equation follows. 
\par
Finally, suppose that $v_1, v_2, v_3, v_4$ are spike vertices. 
By Lemmas \ref{Lem gap from shear} and \ref{Lem shear from l-len and gap}, $t_1=t_2=t_3=t_4=0$ and the assigned equation follows. 
\end{proof}
Let $v$ be a vertex. 
We consider the calculation of the decoration parameters from the $\lambda$-lengths. 
Take a triangle $T$ which has $v$ as a vertex, let $v_1=v, v_2, v_3$ denote the vertices of $T$, and $e_{i, j}$ denote the boundaries of $T$ which have $v_i$ and $v_j$ as the endpoints. 
Let $t_1, t_2, t_3$ denote the gaps between the horocyclic arcs and the lifts of the decoration curves along $\widetilde{e_{3, 1}}, \widetilde{e_{1, 2}}, \widetilde{e_{2, 3}}$, respectively. 
\begin{prop}\label{Prop deco from shear and l-len}(Decoration parameters from shear parameters and $\lambda$-lengths)
The decoration parameter of $v$ is 
$$d_v=\frac{1}{2}(a_{1, 2}-a_{2, 3}+a_{3, 1}-t_2+t_3-x_{e_{3, 1}}-\log\frac{\sum_{i=1}^n\exp(\sum_{j=1}^{i-1}x_{j+1})\cdot\sum_{i=1}^n\exp(\sum_{j=1}^{i-1}x_{j+2})}{\mathrm{min}_{1\le k\le n}\{\sum_{i=1}^n\exp(\sum_{j=1}^{i-1}x_{k+j})\}^2}), $$
where $x_1, \cdots, x_n$ are the shear parameters of $e_{1, 2}=e_1, e_2, \cdots, e_n$ as in the beginning of \$ 4.1. 
\end{prop}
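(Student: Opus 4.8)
The plan is to run, for the single triangle $T$, the same telescoping computation that proves Lemma~\ref{Lem shear from l-len and gap}, with the decoration parameter $d_v$ at the distinguished vertex $v=v_1$ taking over the bookkeeping role played there by the shear parameter of the flipped edge. First I set up the lift: if $v$ is a spike I pass to the doubled surface $S_D$ as in \S 4.1, lift $\widetilde T$ to $\mathbb{H}^2$ following Lemma~\ref{Lem real part of endpt} and the proof of Proposition~\ref{Prop l-len from shear and deco}, and choose the labelling so that in the counter-clockwise enumeration $e_1,\dots,e_n$ of the edges at $v$ one has $e_1=e_{1,2}$ and $e_2=e_{3,1}$; this is forced by the shape of the claimed formula, in which only the sums $A_1:=\sum_{i=1}^n\exp(\sum_{j=1}^{i-1}x_{j+1})$ and $A_2:=\sum_{i=1}^n\exp(\sum_{j=1}^{i-1}x_{j+2})$ attached to $e_1,e_2$ occur, together with $M:=\min_{1\le k\le n}\{\sum_{i=1}^n\exp(\sum_{j=1}^{i-1}x_{k+j})\}$. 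The one geometric ingredient is the elementary fact that in an ideal triangle the horocycle centred at a vertex through the base point of one incident edge also passes through the base point of the other incident edge; I verify it by placing the triangle at $0,1,\infty$. It lets me compare, at $v_2$ and at $v_3$, the two base points of the two edges of $T$ meeting there, since they lie on a common horocycle, and --- using equidistant curves in place of horocycles when the vertex is a closed geodesic boundary, exactly as in Lemma~\ref{Lem gap from shear} --- to relate the corresponding half-edge $\lambda$-lengths to the decoration curves through the gaps $t_2,t_3$.

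Next I decompose each edge $\lambda$-length. By Definition~\ref{Def l-length} each $a_{i,j}$ is the sum of its two endpoint half-edge $\lambda$-lengths, taken with respect to $T$; I write these as a first term at the tail and a second term at the head, traversing $\partial T$ counter-clockwise. The chain is then: the first term of $a_{1,2}$ at $v=v_1$ is $d_v+\log(A_1/M)$ by Proposition~\ref{Prop l-len from shear and deco}, the base point being the one with respect to the triangle between $e_1$ and $e_2$, which is $T$; the second term of $a_{1,2}$ at $v_2$ differs from the first term of $a_{2,3}$ at $v_2$ by $+t_2$, obtained by inserting between them the point where the horocycle through $p_2$ (the intersection of $\widetilde{e_{2,3}}$ with the decoration curve around $v_2$) meets $\widetilde{e_{1,2}}$, then invoking the ideal-triangle fact and the invariance of the distance between concentric horocycles --- this is the mechanism by which gaps enter the proof of Lemma~\ref{Lem shear from l-len and gap}; the second term of $a_{2,3}$ at $v_3$ differs from the first term of $a_{3,1}$ at $v_3$ by $+t_3$ in the same way; and the second term of $a_{3,1}$ at $v_1$ is the half-edge $\lambda$-length of $e_{3,1}=e_2$ at $v$ with respect to $T$, which by Proposition~\ref{Prop l-len from shear and deco} re-indexed to start at $e_2$ equals $d_v+\log(A_2/M)$ relative to the triangle on the far side of $e_{3,1}$, hence equals $d_v+\log(A_2/M)+x_{e_{3,1}}$ after transport to the base point with respect to $T$ via Definition~\ref{Def shear para}. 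Substituting these four relations into $a_{1,2}-a_{2,3}+a_{3,1}$ cancels every intermediate half-edge term, and solving for $d_v$ gives the stated identity. No gap $t_1$ occurs, although $t_1$ is named in the statement, because the contribution of $v_1=v$ is routed through $d_v$ by Proposition~\ref{Prop l-len from shear and deco} rather than through a canonical-arc comparison; and only $x_{e_{3,1}}$, not $x_{e_{1,2}}$, appears because of the asymmetric placement of $T$ as the triangle just counter-clockwise of $e_1$, so that the half-edge of $e_1$ at $v$ is already with respect to $T$ while that of $e_2=e_{3,1}$ must be transported.

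The case distinction is absorbed at the level of the gaps: when $v_2$ (resp.\ $v_3$) is a cusp or a spike its decoration curve is itself a horocyclic arc, the comparison above is exact, and $t_2=0$ (resp.\ $t_3=0$), as in Lemma~\ref{Lem gap from shear}; when it corresponds to a closed geodesic boundary the decoration curve is an equidistant curve and the discrepancy is the nonzero gap of Lemma~\ref{Lem gap from shear}. Similarly the identity $d_v=a_h-\log(A_1/M)$ extracted from Proposition~\ref{Prop l-len from shear and deco} is uniform in the type of $v$, since the dependence on the boundary length $l_v$ cancels there, as already observed in its proof.

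The step I expect to be the real work is fixing the three signs precisely: that $t_2$ and $t_3$ enter with the signs shown, and that the base-point transport for $e_{3,1}$ contributes $+x_{e_{3,1}}$ rather than $-x_{e_{3,1}}$. Each is a matter of carrying the clause that the direction respects the orientation of the boundary of $\widetilde T$, from Definitions~\ref{Def shear para} and \ref{Def l-length}, through the lift, and of keeping straight that the half-edge of $e_{3,1}$ at $v$ is measured with respect to $T$ and not with respect to the neighbouring triangle. Once these conventions are nailed down, the telescoping and the algebra are routine.
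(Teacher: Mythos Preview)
Your proposal is correct and follows essentially the same route as the paper. The paper's proof also forms the alternating sum $a_{1,2}-a_{2,3}+a_{3,1}$, but it expands all three $\lambda$-lengths via Proposition~\ref{Prop l-len from shear and deco} (so that $d_{v_2},d_{v_3}$ appear explicitly and cancel), and then recognizes the leftover $\log$-ratios around $v_2$ and $v_3$ as $-t_2$ and $-t_3$ using the formula of Lemma~\ref{Lem gap from shear}; you shortcut this by comparing the two half-edges at $v_2$ and at $v_3$ directly through the gap definition, which is exactly the mechanism behind Lemma~\ref{Lem shear from l-len and gap}. Either way the computation is the same telescoping, and your identification of the two half-edges at $v$ as $d_v+\log(A_1/M)$ and $d_v+\log(A_2/M)+x_{e_{3,1}}$ matches what the paper obtains from Proposition~\ref{Prop l-len from shear and deco} applied twice at $v$.
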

\begin{proof}
Use the equations of $a_{1, 2}, a_{2, 3}, a_{3, 1}$ in Proposition \ref{Prop l-len from shear and deco}, eliminate the terms of the decoration parameters $d_{v_2}, d_{v_3}$ of $v_2, v_3$, and the equation of $d_{v}$ is obtained. 
Then, the terms 
$$-x_{e_{1, 2}}-\log\frac{\sum_{i=1}^n\exp(\sum_{j=1}^{i-1}x_{2, j+2})}{\sum_{i=1}^n\exp(\sum_{j=1}^{i-1}x_{2, j+1})}$$
are deformed to $-t_2$, the terms 
$$-x_{e_{2, 3}}-\log\frac{\sum_{i=1}^n\exp(\sum_{j=1}^{i-1}x_{3, j+2})}{\sum_{i=1}^n\exp(\sum_{j=1}^{i-1}x_{3, j+1})}$$
are deformed to $-t_3$, and the assigned equation follows. 
\end{proof}
We can confirm the independence of $d_v$ from the choice of the triangle $T$. 
Let $T'$ be the triangle adjacent to $T$ along $e_{3, 1}$, $v_3, v_4, v_1$ be the vertices of $T'$, $t'_i$ be the gaps for $T'$, and $d'_v$ be the assigned formula in Proposition \ref{Prop deco from shear and l-len} for $T'$. 
By Lemmas \ref{Lem gap from shear} and \ref{Lem shear from l-len and gap}, 
$$d_v-d'_v=-x_{e_{3, 1}}+\frac{1}{2}(a_{1, 2}-a_{2, 3}+a_{3, 4}-a_{4, 1}+t_1+t'_1-t_2+t_3+t'_3-t'_4)=0, $$
where $t_k+t'_k$ are the gaps of $v_k$ for odd numbers $k$ in Lemma \ref{Lem shear from l-len and gap}. 
\par
Proposition \ref{Prop deco from shear and l-len} implies that we can calculate the decoration parameters from the $\lambda$-lengths and the signed boundary lengths if the shear parameters are calculated. 
If $S$ has a spike, there exists a triangulation of $S$ such that any vertex of $V^i$ is 1-valent. 
Actually, take a spike vertex $v_0$, join the puncture vertices $v$ to $v_0$ by edges $e_v$, surround $v$ and $e_v$ with the edges $e'_v$ which have $v_0$ as both of the endpoints, triangulate the remaining area, and such a triangulation is obtained. 
\par
\proof{(Theorem \ref{Thm l-b-len coord})}
By Propositions \ref{Prop b-len from shear} and \ref{Prop l-len from shear and deco}, the map $\psi$ is continuous. 
Moreover, $\psi$ has the continuous inverse by Proposition \ref{Prop shear from l-b-len} and \ref{Prop deco from shear and l-len}. 
Since the map $\psi_x$ is a homeomorphism, the map $\psi_a=\psi\circ\psi_x$ is a homeomorphism. 
\endproof

\subsection{Examples}
\noindent
The assertion of Theorem \ref{Thm l-b-len coord} also holds true when the assumption is that any puncture vertex is 2-valent and any spike vertex is 3-valent, because the equation of the $\lambda$-length for several shear variables can be deformed to the equation for one shear variable. 
Actually, there are only two topological types of the reference surface, which has a triangulation satisfying this assumption and no triangulation satisfying the assumption of Theorem \ref{Thm l-b-len coord}. 
In this section, we consider the coordinate transformation between the shear-decoration coordinates and the $\lambda$-boundary-length coordinates of these two surfaces. 
\begin{figure}[htbp]
\begin{center}
\includegraphics[width=15cm]{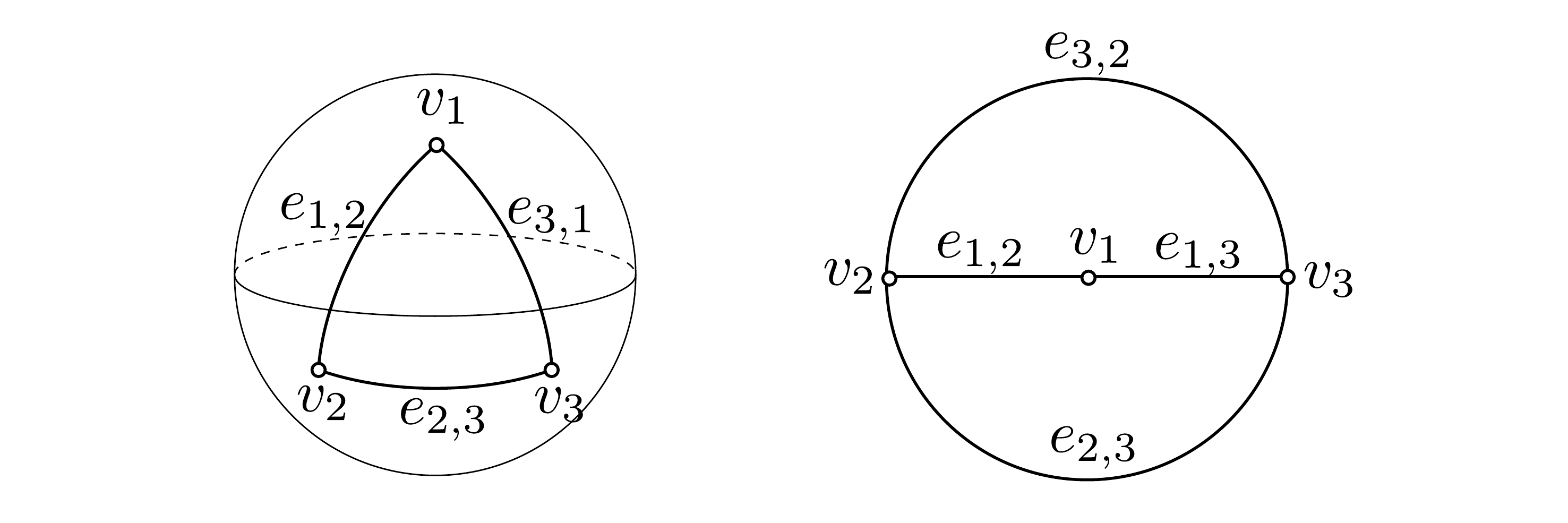}
\caption{Three-punctured sphere (left) and once punctured bigon (right). }
\label{Fig 2-val ex}
\end{center}
\end{figure}
\begin{ex}\label{Ex 3-punc sphere}(Three-punctured sphere)
Let S be the three-punctured sphere. 
For $i, j\in\{1, 2, 3\}$, let $v_i$ and $e_{i, j}$ denote the vertices and the edges as the left of Figure \ref{Fig 2-val ex}. 
Let $x_{i, j}$ denote the shear parameters of $e_{i, j}$, $d_i$ denote the decoration parameters of $v_i$, $a_{i, j}$ denote the $\lambda$-lengths of $e_{i, j}$, and $l_i$ denote the signed boundary lengths of $v_i$. 
By Propositions \ref{Prop b-len from shear} and \ref{Prop l-len from shear and deco}, 
\begin{eqnarray*}
a_{i, j}\!\!\!&=\!\!\!&x_{i, j}+\sum_{l\in\{i, j\}}(d_l+\log\frac{1+e^{x_{k, l}}}{1+\mathrm{min}\{e^{x_{i, j}}, e^{x_{k, l}}\}}), \\
l_i\!\!\!&=\!\!\!&x_{i, j}+x_{i, k}, 
\end{eqnarray*}
for $i, j, k$ with $\{i, j, k\}=\{1, 2, 3\}$. 
Because of the valence of the vertices, we can solve the equation for the shear parameters and the decoration parameters: 
\begin{eqnarray*}
x_{i, j}\!\!\!&=\!\!\!&\frac{1}{2}(l_i+l_j-l_k), \\
d_i\!\!\!&=\!\!\!&\frac{1}{2}(a_{i, j}+a_{i, k}-a_{j, k}-l_i)-\log2\cosh\frac{1}{4}(l_i-l_j-l_k)\\
&&+\log(1+\mathrm{min}\{e^{\frac{1}{2}(l_i+l_j-l_k)}, e^{\frac{1}{2}(l_i+l_k-l_j)}\}), 
\end{eqnarray*}
for $i, j, k$ with $\{i, j, k\}=\{1, 2, 3\}$. 
\end{ex}
\begin{ex}\label{Ex 1-punc bigon}(Once punctured bigon)
Let S be the once punctured bigon. 
For $i, j\in\{1, 2, 3\}$, let $v_i$ and $e_{i, j}$ denote the vertices and the edges as the right of Figure \ref{Fig 2-val ex}. 
Set the parameters as Example \ref{Ex 3-punc sphere}. 
By Propositions \ref{Prop b-len from shear} and \ref{Prop l-len from shear and deco}, 
\begin{eqnarray*}
a_{1, i}\!\!\!&=\!\!\!&d_1+d_i+\log\frac{1+e^{x_{1, j}}}{1+\mathrm{min}\{e^{x_{1, i}}, e^{x_{1, j}}\}}+\log\frac{1+e^{x_{1, i}}}{1+\mathrm{min}\{e^{x_{1, i}}, e^{-x_{1, i}}\}}, \\
a_{i, j}\!\!\!&=\!\!\!&d_i+d_j+\log\frac{1+e^{x_{1, i}}}{1+\mathrm{min}\{e^{x_{1, i}}, e^{-x_{1, i}}\}}+\log\frac{1+e^{-x_{1, j}}}{1+\mathrm{min}\{e^{x_{1, j}}, e^{-x_{1, j}}\}}, \\
l_1\!\!\!&=\!\!\!&x_{1, 2}+x_{1, 3}, 
\end{eqnarray*}
for $i, j$ with $\{i, j\}=\{2, 3\}$. 
Because of the valence of the vertices, we can solve the equation for the shear parameters and the decoration parameters: 
\begin{eqnarray*}
x_{1, i}\!\!\!&=\!\!\!&\frac{1}{2}(l_1+a_{i, j}-a_{j, i}), \\
d_1\!\!\!&=\!\!\!&\frac{1}{2}(a_{1, 2}+a_{1, 3}-\log(e^{\frac{1}{2}(l_1+a_{i, j}+a_{j, i})}+\mathrm{max}\{e^{l_1+a_{2, 3}}, e^{l_1+a_{3, 2}}\}), \\
d_i\!\!\!&=\!\!\!&\frac{1}{2}(a_{1, i}-a_{1, j}+a_{i, j}+\log(1+\mathrm{min}\{e^{\frac{1}{2}(l_1+a_{i, j}-a_{j, i})}, e^{-\frac{1}{2}(l_1+a_{i, j}-a_{j, i})}\})\\
&&-\log(1+e^{\frac{1}{2}(l_1+a_{i, j}-a_{j, i})})(1+e^{-\frac{1}{2}(l_1+a_{j, i}-a_{i, j})})), 
\end{eqnarray*}
for $i, j$ with $\{i, j\}=\{2, 3\}$. 
\end{ex}

\section{Lamination spaces}
\noindent
There are spaces of laminations corresponding to the generalized Teichm\"{u}ller spaces. 
In this chapter, we recall these spaces of laminations, and introduce the generalized spaces of laminations corresponding to the decorated enhanced Teichm\"{u}ller spaces. 
Then, we find the common properties to the specific curves of laminations. 
\subsection{Teichm\"{u}ller spaces and lamination spaces}
\noindent
$\mathcal{X}$-laminations (or unbounded measured laminations) and $\mathcal{A}$-laminations (or bounded measured laminations) are defined by Fock and Goncharov in \cite{Fock-Goncharov}. 
The former are corresponding to the enhanced Teichm\"{u}ller spaces, and the latter are corresponding to the decorated Teichm\"{u}ller spaces. 
For the purpose of the definition of $\mathcal{AX}$-laminations, We define these known laminations in other words. 
\par
In this chapter, simple closed curves on $S$ and simple arcs on $S$ whose endpoints are in $V$ or the boundaries of $S$ are called {\it curves} on $S$. 
\begin{itemize}
\item A curve $\gamma$ is {\it contractible} if it satisfies one of the following: 
\begin{description}
\item[($C_1$-$1$)] there exists a boundary edge $e\in E^b(\Gamma)$ such that the endpoints of $\gamma$ are in the interior of $e$, and there is a subarc $\delta$ of $e$ such that $\gamma$ and $\delta$ bound a disk on $S$, 
\item[($C_1$-$2$)] $\gamma$ bounds a disk on $S$, 
\item[($C_2$-$1$)] there exist a spike vertex $v\in V^b$ and a boundary edge $e\in E^b(\Gamma)$ which has $v$ as an endpoint such that an endpoint of $\gamma$ is $v$ and the other is in the interior of $e$, and there is a subarc $\delta$ of $e$ such that $\gamma$, $\delta$ and $v$ bound a disk on $S$, 
\item[($C_2$-$2$)] there exists a vertex $v\in V$ such that the endpoints of $\gamma$ are $v$ and that $\gamma$ and $v$ bound a disk on $S$. 
\end{description}
\item A curve $\gamma$ is called an {\it $\mathcal{A}$-curve} if $\gamma$ is not contractible and satisfies either: 
\begin{description}
\item[($A$-$1$)] there exist boundary edges $e^+, e^-\in E^b(\Gamma)$ such that the endpoints of $\gamma$ are in the interior of $e^{\pm}$, and there are subarcs $\delta^{\pm}$ of $e^{\pm}$ such that $\gamma$ and $\delta^{\pm}$ bound a disk on $S$. 
\item[($A$-$2$)] there exists a puncture vertex $v\in V^i$ such that $\gamma$ and $v$ bound an annulus on $S$. 
\end{description}
\item A curve $\gamma$ is called an {\it $\mathcal{X}_D$-curve} if $\gamma$ is not contractible and 
\begin{description}
\item[($X$-$1$)] there exists a spike vertex $v\in V^b$ such that an endpoint of $\gamma$ is $v$. 
\end{description}
\item A curve $\gamma$ is called an {\it $\mathcal{X}$-curve} if $\gamma$ is neither contractible nor $\mathcal{X}_D$-curve and 
\begin{description}
\item[($X$-$2$)] there exists a puncture vertex $v\in V^i$ such that an endpoint of $\gamma$ is $v$. 
\end{description}
\item Otherwise, a curve $\gamma$ is {\it general}. 
\end{itemize}
\begin{figure}[htbp]
\begin{center}
\includegraphics[width=15cm]{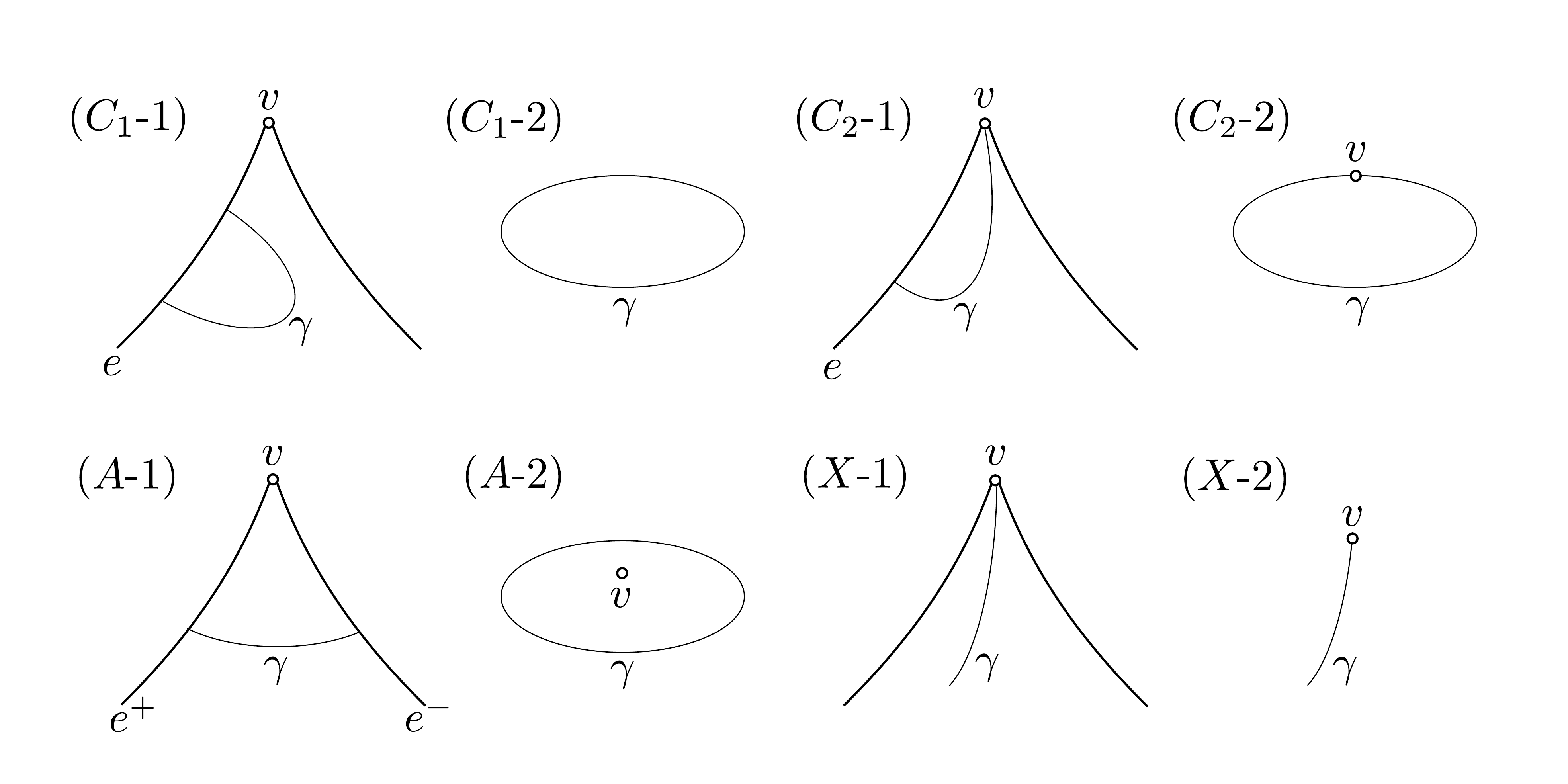}
\caption{features of curves}
\label{Fig curves}
\end{center}
\end{figure}
Consider the doubled surface $S_D$ of $S$, Conditions ($C_1$-$1$), ($C_2$-$1$), ($A$-$1$) and ($X$-$1$) are regard as Conditions ($C_1$-$2$), ($C_2$-$2$), ($A$-$2$) and ($X$-$2$), respectively, for the arcwise connected components of the doubled curve $\gamma_D$ of $\gamma$. 
\begin{dfn}\label{Def lami}(Laminations)
A {\it rational measured lamination} on $S$ is a finite collection of curves on $S$ with rational weights, and satisfies the following conditions: 
\begin{itemize}
\item A curve of negative weight is $\mathcal{A}$-curve. 
\item If two curves are intersecting, one of these curves is an $\mathcal{A}$-curve and the other is an $\mathcal{X}$-curve or an $\mathcal{X}_D$-curve. 
\end{itemize}
Moreover, it is subject to the following equivalence relations: 
\begin{itemize}
\item A lamination containing a contractible curve or a curve of weight $0$ is equivalent to the lamination with these curves removed. 
\item A lamination containing two homotopy (rel endpoints) equivalent curves $\gamma_1$ and $\gamma_2$ of weight $w_1$ and $w_2$, respectively, is equivalent to the lamination with the weight $w_1+w_2$ on $\gamma_1$ and with $\gamma_2$ removed. 
\end{itemize}
\end{dfn}
\begin{dfn}\label{Def A-lami}($\mathcal{A}$-laminations)
A {\it rational $\mathcal{A}$-lamination} (or a {\it rational bounded measured lamination}) is a rational measured lamination which has no $\mathcal{X}$-curves and no $\mathcal{X}_D$-curves. 
The set of rational $\mathcal{A}$-laminations on $S$ is denoted by $T^a(S, \mathbb{Q})$. 
\end{dfn}
For an $\mathcal{A}$-lamination $L$ and an edge $e$, let $a_e(L)$ denote the sum of the weights of curves intersected with $e$, where the representative of $L$ is minimal intersected with edges. 
\begin{prop}\label{Prop coord of A-lami}\cite{Fock-Goncharov}(Coordinates of $\mathcal{A}$-lamination space)
The map 
$$\Phi_{\mathbb{Q}, a} : T^a(S, \mathbb{Q})\ni L\mapsto(a_e(L))_{e\in E}\in\mathbb{Q}^E$$
from rational $\mathcal{A}$-laminations to rational weights of edges is a bijection. 
\end{prop}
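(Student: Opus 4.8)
The plan is to fix the triangulation $\Gamma$ and to construct an explicit two-sided inverse of $\Phi_{\mathbb{Q},a}$ by the classical normal-coordinate reconstruction of measured laminations (Dehn--Thurston), adapted to ideal triangulations with punctured and decorated ends. Since a rational $\mathcal{A}$-lamination contains neither $\mathcal{X}$- nor $\mathcal{X}_D$-curves, no leaf of a representative spirals into a spike or a closed geodesic boundary; hence, after isotoping a representative $L$ into minimal position with respect to $\bigcup_{e\in E}e$, the trace of $L$ on each triangle is a finite system of \emph{corner arcs}, each joining the two sides adjacent to one vertex of that triangle. Globally these corner arcs assemble into simple weighted curves, and the assemblies that are peripheral loops of type $A$-$2$ or curves of type $A$-$1$ are, by Definition~\ref{Def lami}, precisely those permitted to carry negative weight. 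The first step is to isolate this normal-form statement, and then to read the inverse map off triangle by triangle.

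For injectivity, fix a triangle $T$ with vertices $v_1,v_2,v_3$ and opposite sides $e_1,e_2,e_3$, and let $m_i$ be the total weight of the leaves of $L$ cutting off the corner at $v_i$ inside $T$. Every such leaf crosses $e_j$ and $e_k$ once and misses $e_i$, so that
\[
a_{e_1}(L)=m_2+m_3,\qquad a_{e_2}(L)=m_3+m_1,\qquad a_{e_3}(L)=m_1+m_2
\]
(with the evident modification when sides of $T$ are identified in $\Gamma$), a linear system whose unique solution is $m_i=\tfrac12\bigl(a_{e_j}(L)+a_{e_k}(L)-a_{e_i}(L)\bigr)$. Thus $\bigl(a_e(L)\bigr)_{e\in E}$ determines the weighted count of corner arcs of each type in every triangle; the linear order of the intersection points along each edge and the way corner arcs are matched across each edge are then forced up to homotopy rel endpoints, and the leaves of type $A$-$1$ and $A$-$2$ carrying the residual (negative) weight are pinned down by the same relations. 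Hence $L$ is recovered from $\Phi_{\mathbb{Q},a}(L)$, so $\Phi_{\mathbb{Q},a}$ is injective.

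For surjectivity, start from an arbitrary $(a_e)_{e\in E}\in\mathbb{Q}^E$ and set $m_i=\tfrac12(a_{e_j}+a_{e_k}-a_{e_i})$ in each triangle; no integrality obstruction arises over $\mathbb{Q}$, since a weight is an arbitrary rational and a corner arc of that weight is a single curve. Place one corner arc of weight $m_i$ at $v_i$ when $m_i\ge 0$, and when $m_i<0$ realize the deficit there by admissible negatively weighted leaves of type $A$-$1$ or $A$-$2$. Gluing the corner arcs across each edge according to the linear order of the intersection points produces a finite weighted curve system whose only crossings are of $\mathcal{A}$-type with $\mathcal{A}$-type, which can be removed by homotopy; it therefore represents a rational $\mathcal{A}$-lamination $L$ with $a_e(L)=a_e$ for all $e\in E$. (The real version is obtained verbatim, replacing $\mathbb{Q}$ by $\mathbb{R}$.)

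The main obstacle is the bookkeeping of the negative weights: one must check, by a case analysis on the type of each corner, that whenever $\tfrac12(a_{e_j}+a_{e_k}-a_{e_i})<0$ the corner at $v_i$ carries only leaves of $\mathcal{A}$-type — so that Definition~\ref{Def lami} sanctions the negative weight — and that the choices so forced are mutually consistent across all triangles sharing an edge; equivalently, that the normal-form data of the first step parametrize precisely the degrees of freedom $\mathbb{Q}^E$, neither more nor fewer. It is convenient to cross-check this on the doubled surface $S_D$, on which boundary edges become interior edges and curves of type $A$-$1$ become curves of type $A$-$2$, reducing the analysis to the puncture case. Everything else is the standard normal-coordinate reconstruction.
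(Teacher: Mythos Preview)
The paper does not give its own proof of this proposition; it is quoted as a result of Fock--Goncharov (the citation \cite{Fock-Goncharov} appears in the proposition heading), so there is nothing in the paper to compare against. Your outline is essentially the standard Fock--Goncharov argument: normal position relative to $\Gamma$, corner arcs in each triangle, the $3\times 3$ linear system $a_{e_i}=m_j+m_k$ with solution $m_i=\tfrac12(a_{e_j}+a_{e_k}-a_{e_i})$, and train-track reconstruction. As a sketch it is correct.

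One point deserves tightening. Your handling of negative corner weights is phrased locally (``when $m_i<0$ realize the deficit there by admissible negatively weighted leaves of type $A$-$1$ or $A$-$2$''), but an $\mathcal{A}$-curve around a vertex $v$ contributes the \emph{same} weight to the corner at $v$ in every triangle incident to $v$, whereas the numbers $m_v^T$ generally differ from triangle to triangle. The clean fix is global rather than triangle-by-triangle: for each vertex $v$ set $c_v=\min\bigl(0,\min_T m_v^T\bigr)$, subtract from the target data the $\mathcal{A}$-curve around $v$ with weight $c_v$ (equivalently, replace each $m_v^T$ by $m_v^T-c_v\ge 0$), reconstruct the resulting non-negative corner data as an ordinary normal curve system, and then append the $\mathcal{A}$-curves with weights $c_v\le 0$. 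This is exactly the consistency you flag in your last paragraph, and phrased this way no case analysis is needed. A minor wording issue: ``spirals into a spike or a closed geodesic boundary'' mixes the hyperbolic picture into a purely topological statement; the relevant fact is simply that no curve of an $\mathcal{A}$-lamination has an endpoint in $V$, so every component of $L\cap T$ joins two sides of $T$.
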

The topology of $T^a(S, \mathbb{Q})$ is obtained by $\mathbb{Q}^E$, and we can define the space of {\it real $\mathcal{A}$-laminations} $T^a(S, \mathbb{R})$ as the completion of $T^a(S, \mathbb{Q})$. 
Let $\Phi_a$ denote the correspondence from $T^a(S, \mathbb{R})$ to $\mathbb{R}^E$. 
Proposition \ref{Prop coord of A-lami} implies that the space of real $\mathcal{A}$-laminations is parametrized by the product of $\mathbb{R}$ as the decorated Teichm\"{u}ller space. 
\begin{dfn}\label{Def ori map}(Orientation maps)
For a rational measured lamination $L$, a map $\epsilon:V\rightarrow\{0, \pm1\}$ is called an {\it orientation map} if $\epsilon^{-1}(\{\pm1\})=V_L$, where $V_L$ is the set of vertices that are endpoints of curves of non-zero weight. 
\end{dfn}
\begin{dfn}\label{Def X-lami}($\mathcal{X}$-laminations)
A {\it rational $\mathcal{X}$-lamination} (or a {\it rational unbounded measured lamination}) $(L, \epsilon)$ is a pair of a rational measured lamination $L$ which has no $\mathcal{A}$-curves and no $\mathcal{X}_D$-curves and an orientation map $\epsilon$. 
The set of rational $\mathcal{X}$-laminations on $S$ is denoted by $T^x(S, \mathbb{Q})$. 
\end{dfn}
Let $(L, \epsilon)$ be an $\mathcal{X}$-lamination, $e$ be an interior edge, and edges $e_{1, 2}, e_{2, 3}, e_{3, 4}, e_{4, 1}$ be the edges as the right of Figure \ref{Fig X-lami}. 
Take the representative of $L$ spiraling at endpoints infinitely as shown in the left of Figure \ref{Fig X-lami}. 
A curve intersects with $e$ {\it positively} if the intersection is interposed between the intersections with $e_{1, 2}$ and $e_{3, 4}$ as the curve $\gamma$ in Figure \ref{Fig X-lami}. 
A curve intersects with $e$ {\it negatively} if the intersection is interposed between the intersections with $e_{2, 3}$ and $e_{4, 1}$ as the curve $\gamma'$ in Figure \ref{Fig X-lami}. 
Let $x_e(L, \epsilon)$ denote the sum of the weights of curves intersected with $e$ positively minus the sum of the weights of curves intersected with $e$ negatively. 
Other curves as the curve $\delta$ and $\delta'$ in Figure \ref{Fig X-lami} do not effect on $x_e(L, \epsilon)$. 
\begin{figure}[htbp]
\begin{center}
\includegraphics[width=15cm]{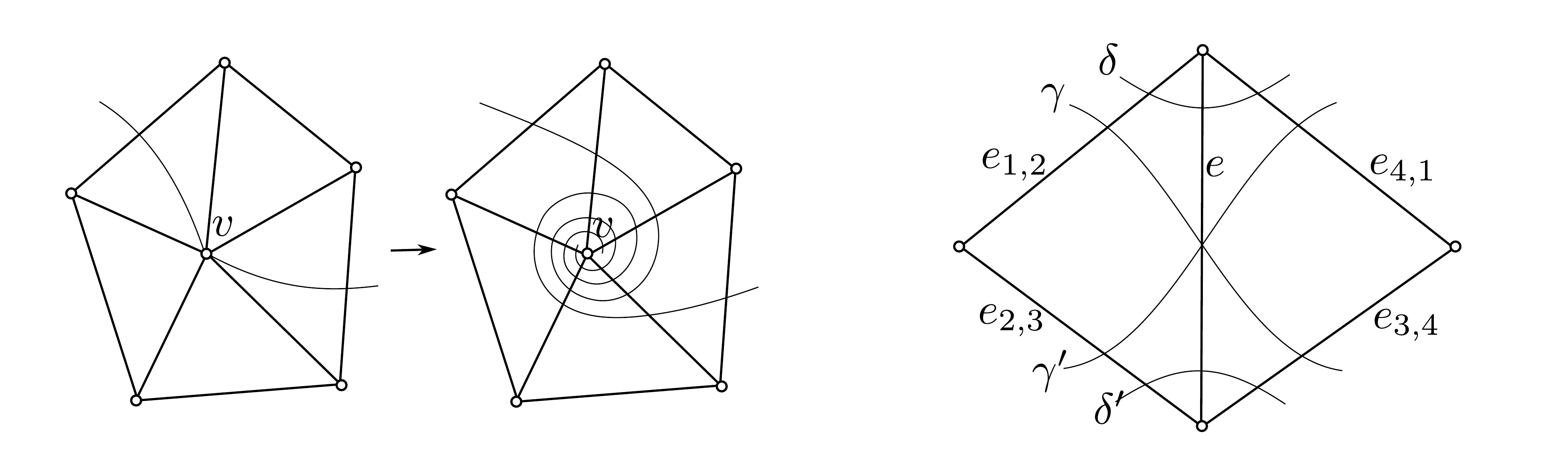}
\caption{Spiraling representative in the case that $\epsilon(v)=1$ (left), and curves intersect with edges positively and negatively (right). }
\label{Fig X-lami}
\end{center}
\end{figure}
\begin{prop}\label{Prop coord of X-lami}\cite{Fock-Goncharov}(Coordinates of $\mathcal{X}$-lamination space)
The map 
$$\Phi_{\mathbb{Q}, x} : T^x(S, \mathbb{Q})\ni (L, \epsilon)\mapsto(x_e(L, \epsilon))_{e\in E^i}\in\mathbb{Q}^{E^i}$$
from rational $\mathcal{X}$-laminations to rational weights of internal edges is a bijection. 
\end{prop}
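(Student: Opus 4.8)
The plan is to establish the bijectivity of $\Phi_{\mathbb{Q}, x}$ by constructing an explicit inverse, in parallel with the (known) proof for $\Phi_{\mathbb{Q}, a}$. First I would fix a triangulation $\Gamma$ of $S$ and recall that a rational $\mathcal{X}$-lamination $(L, \epsilon)$ has a canonical representative: each curve is isotoped to spiral infinitely around the puncture and spike vertices that are its endpoints (in the direction dictated by $\epsilon$), and simultaneously made minimal with respect to every edge of $\Gamma$. The key structural observation is that such a minimal spiraling representative, restricted to each triangle of $\Gamma$, consists of finitely many disjoint arcs each connecting two distinct sides of the triangle, together with ``corner-spiraling'' pieces near the vertices; the combinatorics within a triangle is therefore governed by three non-negative integers (the numbers of arcs of each of the three corner types), and these must match up consistently across shared edges. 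This is the $\mathcal{X}$-analogue of the normal-curve description used for $\mathcal{A}$-laminations.

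Next I would show injectivity of $\Phi_{\mathbb{Q}, x}$: the signed intersection numbers $x_e(L, \epsilon)$, taken over all interior edges $e$, together with the sign pattern of the spiraling, determine the corner-arc counts in every triangle (by solving, in each triangle, the local linear system relating corner counts to the intersection numbers of the three sides), hence determine the isotopy class of the minimal representative and the orientation map $\epsilon$ (the vertices with $\epsilon(v) = \pm 1$ are exactly those at which some curve spirals, and the sign is recorded by whether the nearby intersections are positive or negative). For surjectivity, given an arbitrary $(x_e)_{e \in E^i} \in \mathbb{Q}^{E^i}$, I would build a lamination by choosing in each triangle the corner-arc multiplicities forced by the $x_e$'s, checking that the choices on the two sides of each interior edge agree so that the local pieces glue into genuine curves on $S$, assigning each resulting curve an appropriate rational weight (clearing denominators to realize non-integer values), and defining $\epsilon$ on a vertex to be $+1$, $-1$, or $0$ according to the induced spiraling direction there. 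One then verifies that this lamination has no $\mathcal{A}$-curves or $\mathcal{X}_D$-curves — which follows because every constructed curve has an endpoint at an interior (puncture) vertex by construction — and that $\Phi_{\mathbb{Q}, x}$ returns the prescribed $(x_e)$.

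The main obstacle I anticipate is the gluing/consistency step: when the two triangles adjacent to an interior edge $e$ independently impose corner-arc data, one must argue these are compatible and that the glued arcs close up into curves of the allowed type (rather than, say, producing a curve homotopic into a boundary edge, which would be an $\mathcal{X}_D$-curve and hence forbidden). Handling the signed count $x_e$ — which mixes ``positive'' and ``negative'' intersections and hence allows cancellation — requires care: one should normalize so that no curve crosses $e$ both positively and negatively after isotopy, i.e. choose the representative realizing the geometric intersection number equal to $|x_e|$ on each edge, which is exactly what the spiraling minimal representative achieves. A secondary point is the treatment of boundary components and spike vertices: since $\mathcal{X}$-curves by definition avoid spike endpoints (those are $\mathcal{X}_D$-curves), and the excerpt's doubling convention reduces conditions at the boundary to interior conditions on $S_D$, I would invoke the doubled surface to transfer the argument, noting that the $\mathcal{X}$-lamination coordinates live only on interior edges $E^i$, consistent with the target $\mathbb{Q}^{E^i}$. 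Once bijectivity over $\mathbb{Q}$ is in hand, the passage to $T^x(S, \mathbb{R})$ and the homeomorphism $\Phi_x$ follows formally by completion, exactly as recorded after Proposition~\ref{Prop coord of A-lami}.
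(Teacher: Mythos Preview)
The paper does not prove this proposition at all: it is stated with the citation \cite{Fock-Goncharov} and immediately followed by the next paragraph, so there is no ``paper's own proof'' to compare against. Your proposal is therefore not competing with anything in the text; it is an attempt to supply an argument the author deliberately outsourced.

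As for the sketch itself, the overall shape (minimal spiraling representative, triangle-by-triangle corner data, glue, check type) is the standard Fock--Goncharov approach, but a couple of points are not right as written. First, your injectivity step speaks of ``solving, in each triangle, the local linear system relating corner counts to the intersection numbers of the three sides''; that is the $\mathcal{A}$-lamination mechanism. Here the spiraling representative meets each edge \emph{infinitely} often, and $x_e$ is not an intersection number but a signed difference counting only those strands that cross from $e_{1,2}$ to $e_{3,4}$ (positive) or from $e_{2,3}$ to $e_{4,1}$ (negative), with the ``parallel'' strands $\delta,\delta'$ contributing nothing. Recovering the lamination from the $x_e$'s is therefore not a matter of inverting a $3\times 3$ system per triangle. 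Second, in your surjectivity paragraph you assert that ``every constructed curve has an endpoint at an interior (puncture) vertex by construction''; this is false in general. Rational $\mathcal{X}$-laminations are allowed to contain general curves (closed curves, arcs between boundary edges), not only $\mathcal{X}$-curves, and the inverse construction will typically produce such curves. What you actually need to exclude is the appearance of $\mathcal{A}$-curves and $\mathcal{X}_D$-curves, and that requires a separate argument, not the one you gave.
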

The topology of $T^x(S, \mathbb{Q})$ is obtained by $\mathbb{Q}^{E^i}$, and we can define the space of {\it real $\mathcal{X}$-laminations} $T^x(S, \mathbb{R})$ as the completion of $T^x(S, \mathbb{Q})$. 
Let $\Phi_x$ denote the correspondence from $T^x(S, \mathbb{R})$ to $\mathbb{R}^{E^i}$. 
Proposition \ref{Prop coord of X-lami} implies that the space of real $\mathcal{X}$-laminations is parametrized by the product of $\mathbb{R}$ as the enhanced Teichm\"{u}ller space. 

\subsection{$\mathcal{AX}$-lamination spaces}
\noindent
In this section, we introduce the generalized spaces of laminations corresponding to the decorated enhanced Teichm\"{u}ller spaces. 
Then, we research the meaning of curves of each type. 
\begin{dfn}\label{Def AX-lami}($\mathcal{AX}$-laminations)
A {\it rational $\mathcal{AX}$-lamination} $(L, \epsilon)$ is a pair of a rational measured lamination $L$ which has no $\mathcal{X}_D$-curves and an orientation map $\epsilon$. 
The set of rational $\mathcal{AX}$-laminations on $S$ is denoted by $T^{ax}(S, \mathbb{Q})$. 
\end{dfn}
Let $(L, \epsilon)$ be a rational $\mathcal{AX}$-lamination and $L_a$ be the lamination composed of $\mathcal{A}$-curves of $L$. 
Let $d_v(L_a)$ denote the weight of the $\mathcal{A}$-curve corresponding to $v$, where the correspondence is shown in Figure \ref{Fig curves}. 
Since $L\setminus L_a$ has no $\mathcal{A}$-curves, $x_e(L\setminus L_a, \epsilon)$ can be calculated as rational $\mathcal{X}$-laminations. 
By Proposition \ref{Prop coord of X-lami}, the following proposition follows. 
\begin{prop}\label{Prop coord of AX-lami}(Coordinates of $\mathcal{AX}$-lamination space)
This map 
$$\Psi_{\mathbb{Q}, x} : T^{ax}(S, \mathbb{Q})\ni (L, \epsilon)\mapsto((x_e(L\setminus L_a, \epsilon))_{e\in E^i}, (d_v(L_a))_{v\in V})\in\mathbb{Q}^{E^i}\times\mathbb{Q}^{V}$$
from rational $\mathcal{AX}$-laminations to rational weights of internal edges and rational weights of $\mathcal{A}$-curves is a bijection. 
\end{prop}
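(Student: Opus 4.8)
The plan is to deduce the bijectivity of $\Psi_{\mathbb{Q},x}$ from the two already-established coordinate bijections (Proposition \ref{Prop coord of A-lami} and Proposition \ref{Prop coord of X-lami}) by exploiting the canonical splitting of a rational $\mathcal{AX}$-lamination $L$ into its $\mathcal{A}$-part $L_a$ and the remaining curves $L\setminus L_a$. The first step is to record the structural fact underlying the $\mathbb{Q}^V$-factor: every $\mathcal{A}$-curve on $S$ is homotopic, in the sense of the equivalence relations of Definition \ref{Def lami}, to exactly one of the canonical curves $\gamma_v$ ($v\in V$) pictured in Figure \ref{Fig curves} — a curve of type (A-2) to the simple loop encircling a puncture vertex, and a curve of type (A-1) to the arc cutting off the corner at a spike vertex — and that distinct $\gamma_v$ are non-homotopic. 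Since a curve of negative weight must be an $\mathcal{A}$-curve, this identifies the $\mathcal{A}$-sublaminations of $S$ with $\mathbb{Q}^V$ via $L_a\mapsto(d_v(L_a))_{v\in V}$, with negative and zero weights allowed; this is exactly Proposition \ref{Prop coord of A-lami} applied to the very restricted family of $\mathcal{A}$-curves occurring here. A second, elementary observation is that no $\gamma_v$ has an endpoint at a vertex (the type-(A-2) curves are closed, the type-(A-1) curves end in the interiors of boundary edges), so $V_L=V_{L\setminus L_a}$ for every $\mathcal{AX}$-lamination. Hence $(L\setminus L_a,\epsilon)$ is a genuine rational $\mathcal{X}$-lamination (its orientation map already has the required support), and by definition $\Psi_{\mathbb{Q},x}(L,\epsilon)=\bigl(\Phi_{\mathbb{Q},x}(L\setminus L_a,\epsilon),\,(d_v(L_a))_{v\in V}\bigr)$.

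For injectivity I would take $(L,\epsilon)$ and $(L',\epsilon')$ with the same image. Equality of the $d_v$-coordinates gives $L_a=L'_a$ by the first step, and equality of the $x_e$-coordinates together with the injectivity in Proposition \ref{Prop coord of X-lami}, applied to the $\mathcal{X}$-laminations $(L\setminus L_a,\epsilon)$ and $(L'\setminus L'_a,\epsilon')$, gives $(L\setminus L_a,\epsilon)=(L'\setminus L'_a,\epsilon')$; hence $L=L_a\cup(L\setminus L_a)=L'$ and $\epsilon=\epsilon'$. For surjectivity, given $\bigl((x_e)_{e\in E^i},(d_v)_{v\in V}\bigr)$, I set $L_a:=\sum_{v\in V}d_v\,\gamma_v$ (discarding terms with $d_v=0$), use the surjectivity in Proposition \ref{Prop coord of X-lami} to pick a rational $\mathcal{X}$-lamination $(L_x,\epsilon)$ with $\Phi_{\mathbb{Q},x}(L_x,\epsilon)=(x_e)_{e\in E^i}$, and put $L:=L_a\cup L_x$. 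Because the $\gamma_v$ avoid every vertex, $\epsilon$ is still a valid orientation map for $L$, and then $\Psi_{\mathbb{Q},x}(L,\epsilon)=\bigl((x_e)_{e\in E^i},(d_v)_{v\in V}\bigr)$ by construction; the only point needing care is that $(L,\epsilon)$ really satisfies Definition \ref{Def lami}.

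The main obstacle is precisely this last verification, together with the $\mathcal{A}$-curve classification of the first step — these are the only places where geometric input beyond the cited propositions enters. For the intersection constraint one notes that $L_x$, being an $\mathcal{X}$-lamination, contains no $\mathcal{A}$-curves and is therefore already a disjoint curve system, that the $\gamma_v$ are pairwise disjoint since they are localized near distinct vertices, and that an $\mathcal{A}$-curve is permitted to meet an $\mathcal{X}$-curve; so the only potentially illegal crossings are between some $\gamma_v$ and a general (non-$\mathcal{X}$, non-$\mathcal{A}$) curve of $L_x$, and these are removed by isotoping such a curve out of small neighborhoods of the vertices — a general curve can always be pushed off a puncture loop and off a spike corner, since otherwise it would itself be contractible or an $\mathcal{A}$-curve. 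Once this and the classification of $\mathcal{A}$-curves are in place, the remainder of the argument is bookkeeping on top of Propositions \ref{Prop coord of A-lami} and \ref{Prop coord of X-lami}.
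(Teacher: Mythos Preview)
Your argument is correct and is exactly the splitting the paper has in mind; the paper itself dispatches the proposition in one line by invoking Proposition \ref{Prop coord of X-lami} for the $L\setminus L_a$ factor and treating the $\mathcal{A}$-curve factor as obvious, so your write-up is a fleshed-out version of the same approach. One small correction: the bijection between $\mathcal{A}$-sublaminations and $\mathbb{Q}^V$ is not an instance of Proposition \ref{Prop coord of A-lami} (that proposition uses the edge-intersection numbers $a_e$, not the curve-weights $d_v$); it is simply the tautology that once the homotopy classes of $\mathcal{A}$-curves are indexed by $V$, a weighted combination is determined by and determines its tuple of weights.
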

The topology of $T^{ax}(S, \mathbb{Q})$ is obtained by $\mathbb{Q}^{E^i}\times\mathbb{Q}^{V}$, and we can define the space of {\it real $\mathcal{AX}$-laminations} $T^{ax}(S, \mathbb{R})$ as the completion of $T^{ax}(S, \mathbb{Q})$. 
Let $\Psi_x$ denote the correspondence from $T^{ax}(S, \mathbb{R})$ to $\mathbb{R}^{E^i}\times\mathbb{R}^{V}$. 
Proposition \ref{Prop coord of AX-lami} implies that the space of real $\mathcal{AX}$-laminations is parametrized by the product of $\mathbb{R}$ as the decorated enhanced Teichm\"{u}ller space, and weights of $\mathcal{A}$-curves correspond to decoration parameters. 
\par
For a boundary edge $e$, we can also define $x_e$. 
Consider the doubled surface $S_D$ of $S$, the doubled lamination $L_D$ of $L$ and the doubled orientation $\epsilon_D$ of $\epsilon$ such that these are invariant under the involution. 
$x_e(L, \epsilon)$ of the boundary edge $e$ of $S$ is defined by $x_e(L_D, \epsilon_D)$ of the interior edge $e$ of $S_D$. 
This definition corresponds to considering the shear parameters of boundary edges. 
\begin{dfn}\label{Def XD-lami and AXD-lami}($\mathcal{X}_D$-laminations and $\mathcal{AX}_D$-laminations)
A {\it rational $\mathcal{X}_D$-lamination} $(L, \epsilon)$ is a pair of a rational measured lamination $L$ which has no $\mathcal{A}$-curves and an orientation map $\epsilon$. 
The set of rational $\mathcal{X}_D$-laminations on $S$ is denoted by $T^x_D(S, \mathbb{Q})$. 
A {\it rational $\mathcal{AX}_D$-lamination} $(L, \epsilon)$ is a pair of a rational measured lamination $L$ and an orientation map $\epsilon$. 
The set of rational $\mathcal{AX}_D$-laminations on $S$ is denoted by $T^{ax}_D(S, \mathbb{Q})$. 
\end{dfn}
By the argument similar to Proposition \ref{Prop coord of X-lami} and \ref{Prop coord of AX-lami}, the maps 
$$\Phi_{\mathbb{Q}, x, D} : T^x_D(S, \mathbb{Q})\ni (L, \epsilon)\mapsto(x_e(L, \epsilon))_{e\in E}\in\mathbb{Q}^{E}\mathrm{\ and}$$
$$\Psi_{\mathbb{Q}, x, D} : T^{ax}_D(S, \mathbb{Q})\ni (L, \epsilon)\mapsto((x_e(L\setminus L_a, \epsilon))_{e\in E}, (d_v(L_a))_{v\in V})\in\mathbb{Q}^E\times\mathbb{Q}^{V}$$
are bijective, and these completions 
$$\Phi_{x, D} : T^x_D(S, \mathbb{R})\rightarrow\mathbb{R}^{E}\mathrm{\ and\ }\Psi_{x, D} : T^{ax}_D(S, \mathbb{R})\rightarrow\mathbb{R}^E\times\mathbb{R}^{V}$$
are defined. 
\par
\begin{prop}\label{Prop ori and enha}(Orientations and enhancements)
Let $[(L_n, \epsilon_n)_{n\in\mathbb{N}}]$ be a real $\mathcal{AX}$-lamination, let $[X, f, \varepsilon, D]$ be a decorated enhanced marked hyperbolic surface, and let $v$ be an interior vertex. 
If $[(L_n, \epsilon_n)_{n\in\mathbb{N}}]$ and $[X, f, \varepsilon, D]$ are parametrized to the same point of $\mathbb{R}^{E^i}\times\mathbb{R}^{V}$ and $\epsilon_n(v)$ converges to $0$, $\varepsilon(C_v)$ equals to $0$, where $C_v$ is the cusp or the closed geodesic boundary corresponding to $v$. 
\end{prop}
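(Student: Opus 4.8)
The plan is to translate the hypothesis, which concerns the orientation of a lamination, into a statement about shear coordinates, and then read off the enhancement from Proposition \ref{Prop b-len from shear}. Recall that by Proposition \ref{Prop b-len from shear} the signed boundary length of an interior vertex $v$ of $X$ is $l_v=\sum_{k=1}^n x_{e_k}$, where $e_1,\dots,e_n$ are the edges around $v$ listed with the cyclic multiplicity of \S 4.1 (so an edge with both endpoints $v$ occurs twice), and $\varepsilon(C_v)=0$ precisely when $C_v$ is a cusp, i.e. when $l_v=0$. On the other hand, since $[(L_n,\epsilon_n)_{n\in\mathbb{N}}]$ and $[X,f,\varepsilon,D]$ are parametrized to the same point of $\mathbb{R}^{E^i}\times\mathbb{R}^V$, the shear parameter of each interior edge $e$ of $X$ equals $\lim_{n\to\infty}x_e(L_n\setminus L_{a,n},\epsilon_n)$. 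Hence it suffices to prove that $\sum_{k=1}^n x_{e_k}(L_n\setminus L_{a,n},\epsilon_n)=0$ for all sufficiently large $n$.

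Since $\epsilon_n(v)$ takes values in the discrete set $\{0,\pm1\}$, the assumption $\epsilon_n(v)\to 0$ forces $\epsilon_n(v)=0$ for all large $n$; fix such an $n$ and set $(M,\eta)=(L_n\setminus L_{a,n},\epsilon_n)$, a rational $\mathcal{X}$-lamination with $\eta(v)=0$, so that no leaf of $M$ has $v$ as an endpoint. The heart of the argument is the combinatorial identity
$$\sum_{k=1}^n x_{e_k}(M,\eta)=0.$$
To prove it, take the representative of $M$ spiraling at its endpoints and in minimal position with respect to $\Gamma$, and examine its intersection with the closed star $\mathrm{St}(v)$ of $v$, i.e. the union of the triangles having $v$ as a vertex. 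Because no leaf ends at $v$, every component of this intersection is an arc entering and leaving $\mathrm{St}(v)$ through edges of $\partial\,\mathrm{St}(v)$; by minimality it crosses a consecutive run $e_a,e_{a+1},\dots,e_b$ of the spokes at $v$ without winding around $v$. Checking this configuration against the sign convention of \S 5.1 (Figure \ref{Fig X-lami}), one finds that the crossing with $e_a$ is of the negative type, the crossing with $e_b$ is of the positive type, and every crossing with $e_{a+1},\dots,e_{b-1}$ is of the type that turns around $v$ and therefore contributes nothing; when $a=b$ the single crossing turns around a vertex adjacent to $v$ and again contributes nothing. Thus each arc contributes $0$, and summing over arcs and leaves (with weights) gives the identity.

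Assembling the pieces: for all large $n$ we obtain $\sum_{k=1}^n x_{e_k}(L_n\setminus L_{a,n},\epsilon_n)=0$, hence the signed boundary length of $v$ on $X$ is $l_v=\sum_{k=1}^n x_{e_k}(X)=\lim_{n\to\infty}0=0$. Since a closed geodesic boundary has positive length, $C_v$ must be the cusp corresponding to $v$, and therefore $\varepsilon(C_v)=0$, as claimed.

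I expect the main obstacle to be the combinatorial identity in the second paragraph. It needs a careful case analysis of how an arc crosses an interior edge relative to the orientation of $S$ near $v$, together with a verification that leaves spiraling at punctures adjacent to $v$ produce within $\mathrm{St}(v)$ only single-spoke arcs (so that their infinitely many loops each contribute $0$), and some bookkeeping for the degenerate spokes that have both endpoints equal to $v$ and hence occur twice in the cyclic list $e_1,\dots,e_n$ — the same multiplicity that appears in Proposition \ref{Prop b-len from shear}.
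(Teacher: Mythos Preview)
Your argument is correct and follows essentially the same route as the paper's own proof: both reduce to large $n$ with $\epsilon_n(v)=0$, invoke Proposition~\ref{Prop b-len from shear} to identify $\varepsilon(C_v)$ with the sign of $\sum_k x_{e_k}$, and then show that every arc of the lamination inside the star of $v$ contributes zero to this sum because its first and last spoke-crossings carry opposite signs while the intermediate ones are of the ``turning around $v$'' type. Your write-up is in fact a bit more careful than the paper's---you explicitly pass to $L_n\setminus L_{a,n}$ (which is what $\Psi_x$ actually uses), you treat the degenerate case $a=b$, and you flag the bookkeeping for self-folded spokes---but these are refinements of the same argument rather than a different approach.
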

\begin{proof}
Since $\epsilon_n(v)$ converges to $0$, for sufficiently large $n$, $\epsilon_n(v)$ equals to $0$. 
Let $N_v$ be the union of triangles which have $v$ as a vertex. 
Take a component $\gamma$ of the intersection of $N_v$ and curves of $L_n$ whose weight $w$ is not $0$. 
By Proposition \ref{Prop b-len from shear}, $\varepsilon(C_v)$ is the sign of the signed boundary length $l_v$ that is the sum of shear parameters of edges which have $v$ as an endpoint. 
Consider the effect of $\gamma$ on $l_v$. 
$\gamma$ intersects at most once with edges which have $v$ as an endpoint, and $\gamma$ does not have $v$ since $\epsilon_n(v)$ is $0$. 
When one goes along $\gamma$ clockwise direction, $\gamma$ adds $w$ at the first intersection and subtracts $w$ at the last intersection. 
In the total result, $\gamma$ has no effect on $l_v$. 
Therefore, $\varepsilon(C_v)$ equals to $0$. 
\end{proof}
Proposition \ref{Prop ori and enha} implies that orientations and enhancements are corresponding. 
However the converse proposition of Proposition \ref{Prop ori and enha} is not applied. 

\subsection{Compatibility of parametrization}
\noindent
Weighted general curves are corresponding to the shear coordinates of a marked hyperbolic surface by the map $\Phi_x$. 
On the other hand, they are corresponding to the $\lambda$-length coordinates of a marked hyperbolic surface by the map $\Phi_a$. 
Following theorem states that these marked hyperbolic surfaces are same. 
Although $T^x(S, \mathbb{R})$ and $T^a(S, \mathbb{R})$ are defined independently, their common curves have the same meaning. 
\par
Let
$$\Psi_a : T^{ax}(S, \mathbb{R})\rightarrow\mathbb{R}^E\times\mathbb{R}^{V^i}$$
be the composition of $\Psi_x$ and $\psi$. 
\par
\begin{thm}\label{Thm compati of para}(Compatibility of parametrization)
The equations 
$$\Psi_x\circ I_x=\iota_x\circ\Phi_x\mathrm{\ and\ }\Psi_a\circ I_a=\iota_a\circ\Phi_a$$
hold, where $\iota_x$ is the inclusion map from $\mathbb{R}^{E^i}$ to $\mathbb{R}^{E^i}\times\mathbb{R}^V$, $\iota_a$ is the inclusion map from $\mathbb{R}^E$ to $\mathbb{R}^E\times\mathbb{R}^{V^i}$, $I_x$ is the inclusion map from $T^x(S, \mathbb{R})$ to $T^{ax}(S, \mathbb{R})$, and $I_a$ is the inclusion map from $T^a(S, \mathbb{R})$ to $T^{ax}(S, \mathbb{R})$ as in Figure \ref{Fig compati of para 1}. 
\end{thm}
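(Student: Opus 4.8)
The plan is to check each equation on the dense subset of rational laminations, where $\Phi_{\mathbb{Q},x}$, $\Phi_{\mathbb{Q},a}$ and $\Psi_{\mathbb{Q},x}$ are the explicit combinatorial maps of \S 5.1--5.2, and then to extend to the completions by continuity. The first equation is immediate. A rational $\mathcal{X}$-lamination $(L,\epsilon)$ has no $\mathcal{X}_D$-curves and no $\mathcal{A}$-curves, so it is already a rational $\mathcal{AX}$-lamination and $I_x(L,\epsilon)=(L,\epsilon)$, and its sublamination $L_a$ of $\mathcal{A}$-curves is empty; hence, by the definition of $\Psi_{\mathbb{Q},x}$ in Proposition \ref{Prop coord of AX-lami}, $\Psi_{\mathbb{Q},x}(I_x(L,\epsilon))=((x_e(L,\epsilon))_{e\in E^i},0)=\iota_x(\Phi_{\mathbb{Q},x}(L,\epsilon))$.

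For the second equation, write a rational $\mathcal{A}$-lamination $L$ as the disjoint union (the curves of an $\mathcal{A}$-lamination are automatically pairwise disjoint) of its sublamination $L_a$ of $\mathcal{A}$-curves and its sublamination $L_{\mathrm{gen}}=L\setminus L_a$ of general curves; since no curve of $L$ ends at a vertex, the orientation map attached to $L$ by $I_a$ is identically $0$. Then $\Psi_{\mathbb{Q},x}(I_a(L))=((x_e(L_{\mathrm{gen}}))_{e\in E^i},(d_v(L_a))_{v\in V})$, the $\mathcal{A}$-curves contributing nothing to the shear parameters and the general curves nothing to the decoration parameters. By Propositions \ref{Prop b-len from shear} and \ref{Prop l-len from shear and deco}, the image of a point $((x_e),(d_v))$ under $\psi$ has boundary-length components $l_v=\sum_{e\ni v}x_e$ and $\lambda$-length components $a_e=\bigl(x_e+f_v(x)+f_{v'}(x)\bigr)+(d_v+d_{v'})$ for an edge $e$ with endpoints $v,v'$ (with the evident modification when $v=v'$), where $f_v$ depends only on the shear parameters around $v$ and $x_e=0$ when $e$ is a boundary edge. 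Because $L_{\mathrm{gen}}$ has no curve ending at a puncture $v\in V^i$, the argument of Proposition \ref{Prop ori and enha} gives $\sum_{e\ni v}x_e(L_{\mathrm{gen}})=0$, so the $\mathbb{R}^{V^i}$-component of $\Psi_a(I_a(L))$ is $0$, as required. The $\lambda$-length components then reduce, by additivity of intersection numbers, to the two identities (a) $\sum_{v\ni e}d_v(L_a)=a_e(L_a)$, which holds because the $\mathcal{A}$-curve associated with a vertex $v$ (the correspondence of Figure \ref{Fig curves}) crosses each edge incident to $v$ exactly once for each endpoint at $v$ in minimal position, and (b) $x_e(L_{\mathrm{gen}})+f_v(x(L_{\mathrm{gen}}))+f_{v'}(x(L_{\mathrm{gen}}))=a_e(L_{\mathrm{gen}})$.

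Identity (b) is the heart of the theorem, and the place where the particular origin fixed in Definition \ref{Def deco para} — the $\min_k$ taken over the quantities $r_k$ — is essential. The key observation is that $L_{\mathrm{gen}}$ makes every puncture $v\in V^i$ a cusp, because its shear parameters sum to $0$ around $v$ and the monodromy is therefore parabolic; consequently the $\log$-of-a-sum in the half-edge formula of Proposition \ref{Prop l-len from shear and deco} collapses. Indeed, using Lemma \ref{Lem real part of endpt} and the translation relation $u_{k+n}=u_k+c$ of the parabolic monodromy, the sums defining the denominator of that formula are scalar multiples of the one defining its numerator, so the ratio simplifies to $f_v(x)=\max_{1\le k\le n_v}\sum_{j=1}^{k-1}x_{e_{j+1}}$, a maximum of partial sums of the shear parameters in the fan at $v$ starting from the half-edge's own edge (and likewise at a spike, via the doubled-surface convention). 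Thus (b) becomes the purely combinatorial assertion that the signed crossing number of $L_{\mathrm{gen}}$ with $e$ plus these two partial-sum maxima at $v$ and $v'$ equals the unsigned crossing number of $L_{\mathrm{gen}}$ with $e$; this is the step I expect to be the main obstacle. I would establish it by a local analysis of a minimal representative of $L_{\mathrm{gen}}$ near each vertex: each strand runs across a consecutive block of edges of the fan at $v$, the partial-sum maximum isolates precisely the strands that wrap past the distinguished edge, and assembling these contributions over all strands and both endpoints turns signed crossings into unsigned ones. Finally, continuity of $\psi$, of the coordinate homeomorphisms, and of $I_x,I_a,\iota_x,\iota_a$, together with the density of $T^x(S,\mathbb{Q})$ and $T^a(S,\mathbb{Q})$ in their completions, promotes both equations from rational laminations to $T^x(S,\mathbb{R})$ and $T^a(S,\mathbb{R})$.
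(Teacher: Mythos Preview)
Your proposal is correct and follows the same overall architecture as the paper: the first equation is immediate from the definitions, and for the second you reduce by density and continuity to rational $\mathcal{A}$-laminations, use Proposition \ref{Prop ori and enha} for the $V^i$-components, and for the $E$-components split $a_e$ into the $L_a$-part (your identity (a)) and the general-curve part (your identity (b)), exactly as the paper does in the displayed computation at the end of \S 5.3.

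Where you diverge is in the treatment of (b). The paper works geometrically in $\mathbb{H}^2$: Lemma \ref{Lem deco pass b1} locates the horocycle corresponding to $d_v=0$ at the base point $b_1$ of a particular edge $\tilde e_1$ in the fan, and Lemma \ref{Lem len from b to alpha} then computes the half-edge $\lambda$-length as $\sum_k w_{\delta_k}$, both by a cancellation argument on positive and negative crossing contributions along the fan. You instead first simplify the half-edge formula of Proposition \ref{Prop l-len from shear and deco} algebraically: using the parabolic condition $\sum_j x_j=0$ (which holds at punctures by Proposition \ref{Prop ori and enha} and at spikes by the doubling convention) one checks that $S_k:=\sum_{i}\exp(\sum_{j<i}x_{k+j})=C\,e^{-Q_k}$ with $Q_k=\sum_{j=2}^{k}x_j$, so that $\log\bigl(S_1/\min_k S_k\bigr)=\max_k Q_k$, giving your closed form $f_v(x)=\max_k\sum_{j<k}x_{e_{j+1}}$. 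This reduces (b) to the purely combinatorial statement that the maximum of these partial sums equals $\sum_k w_{\delta_k}$, which is precisely what the paper's cancellation argument in Lemmas \ref{Lem deco pass b1}--\ref{Lem len from b to alpha} proves in geometric language. So the two routes meet at the same local analysis; your algebraic reduction makes the role of the $\min$ in Definition \ref{Def deco para} more transparent, while the paper's geometric version avoids the intermediate formula but is perhaps less self-contained.
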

\begin{figure}[htbp]
\begin{center}
\includegraphics[width=12cm]{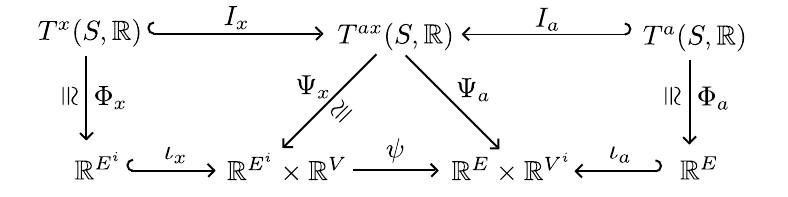}
\caption{The commutative diagram describing that general curves have the same meaning in $T^x(S, \mathbb{R})$ and $T^a(S, \mathbb{R})$. }
\label{Fig compati of para 1}
\end{center}
\end{figure}
Moreover, Theorem \ref{Thm compati of para} implies that the weight of the $\mathcal{A}$-curve of a lamination $L$ around a vertex $v$ is the decoration parameter $d_v$ of the decorated marked hyperbolic surface whose $\lambda$-length coordinates are $\Phi_a(L)$. 
In the remains of this section, we prove this theorem. 
\par
By the construction of $\Phi_x$ and $\Psi_x$, it is immediately follows the former commutative equation. 
Since the maps in the statement are continuous, it is sufficient to show that 
$$\psi\circ\Psi_x\circ I_a(L)=\iota_a\circ\Phi_a(L)$$ 
for any rational $\mathcal{A}$-lamination $L$. 
\par
Let $e$ be an edge, and let $v, v'$ be the endpoints of $e$. 
Lift the edges which have $v$ as an endpoint to the upper half plane $\mathbb{H}^2$ such that a lift $\tilde{v}$ of $v$ is $\infty$. 
Suppose that there is no curve of $L$ intersecting with $e$ negatively. 
(Otherwise, consider the mirror image.)
Lift edges which have $v$ as an endpoint to the upper half plane $\mathbb{H}^2$ such that a lift $\tilde{v}$ of $v$ is $\infty$. 
Let $\tilde{\gamma}_1, \cdots , \tilde{\gamma}_n$ denote the lifts of curves of $L$ intersecting with $e$ positively, $\tilde{\delta}, \tilde{\delta}'$ denote the lifts of $\mathcal{A}$-curves of $L$ around $v, v'$ respectively, and $\tilde{\delta}_1, \cdots , \tilde{\delta}_m, \tilde{\delta}'_1, \cdots , \tilde{\delta}'_{m'}$ denote the lifts of remain general curves of $L$ intersecting with $e$ as shown in Figure \ref{Fig compati of para 2}. 
Let $\tilde{e}$ denote the lift of $e$, and $\tilde{e}_k$ denote the lift of the edge which has $v$ as an endpoint and intersects with $\delta_k$ positively as shown in Figure \ref{Fig compati of para 2}. 
It may occur that $\tilde{e}_i=\tilde{e}_j$ for distinct $i, j$. 
Take the base point $b$ of $\tilde{e}$ with respect to the triangle on the right side of $\tilde{e}$, the base point $b_1$ of $\tilde{e}_1$ with respect to the triangle on the right side of $\tilde{e}_1$, and the lift $\tilde{\alpha}_1$ of the horocycle $\alpha_1$ around $v$ passing through $b_1$. 
\begin{figure}[htbp]
\begin{center}
\includegraphics[width=12cm]{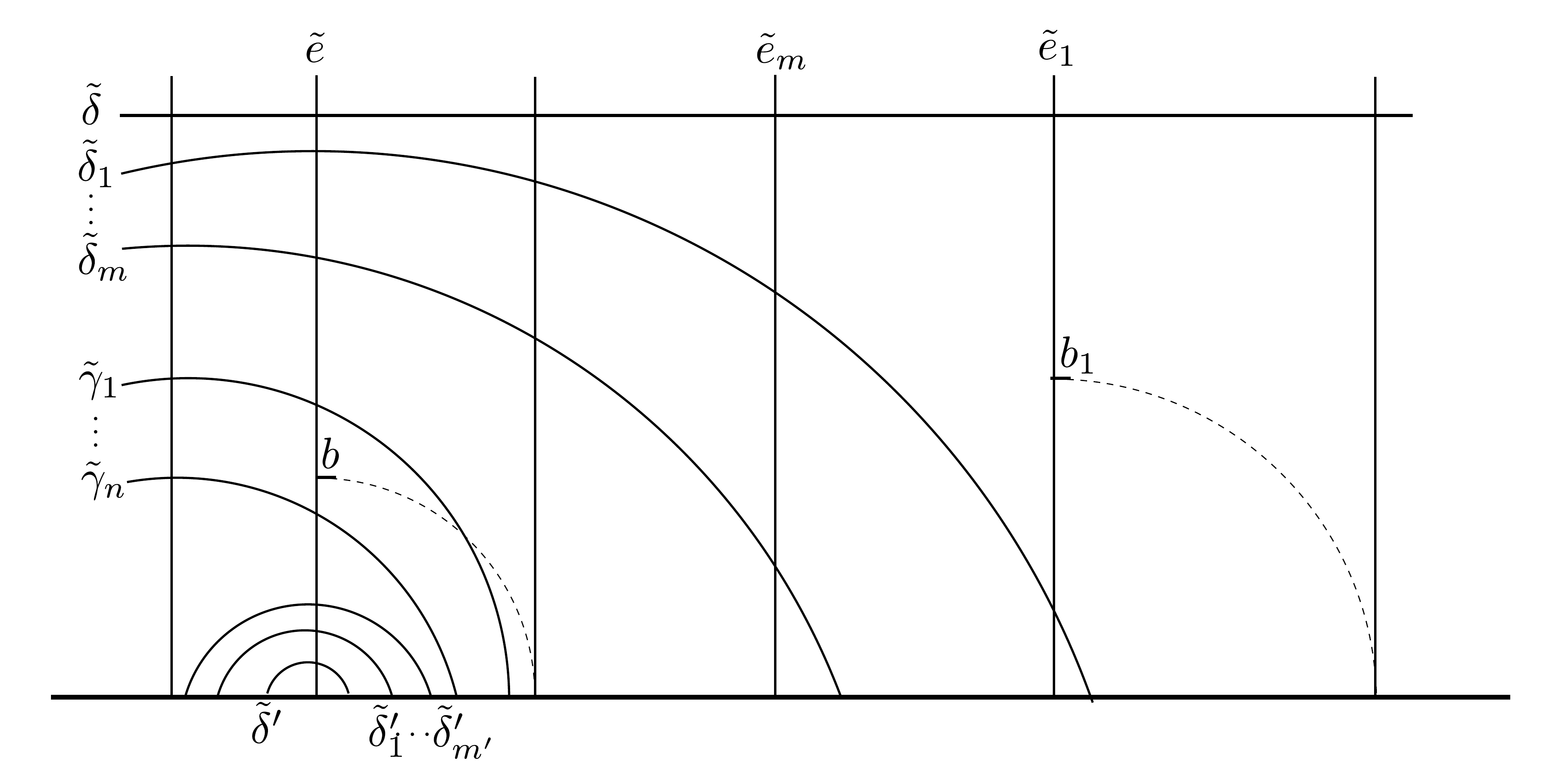}
\caption{The labels of curves of $L$. (In the case that there is a curve of $L$ intersecting with $e$ negatively, consider the mirror image. )}
\label{Fig compati of para 2}
\end{center}
\end{figure}
\par
\begin{lem}\label{Lem deco pass b1}(Decoration curves passing through $b_1$)
$\alpha_1$ is the decoration curve around $v$ when the shear coordinates of the decorated marked hyperbolic surface are $\Psi_x\circ I_a(L)$ and decoration parameter of $v$ is $0$. 
\end{lem}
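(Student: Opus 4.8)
The plan is to translate the statement into a claim about Euclidean heights in the chosen lift and then match it against the normalization built into the decoration parameter. Recall from Proposition~\ref{Prop origin of deco para} that, on the hyperbolic surface with the given shear coordinates, the decoration curve around $v$ has decoration parameter $0$ exactly when it bounds the smallest cusp (equidistant) neighbourhood among those whose boundary passes through a base point of a triangle incident to $v$. Here the hyperbolic surface comes from $\Psi_x\circ I_a(L)$ with $L$ an $\mathcal{A}$-lamination, so by Proposition~\ref{Prop b-len from shear} one has $l_v=0$ at every interior vertex and $v$ is a cusp (or a spike, to be handled on the doubled surface $S_D$ as elsewhere in the paper). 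In the lift with $\widetilde v=\infty$ the relevant neighbourhoods are bounded by horizontal lines, and a shorter horocycle around $v$ is a higher one; hence the statement reduces to showing that, among the base points of all triangles incident to $v$, the point $b_1$ through which $\alpha_1$ was defined to pass lies at the greatest height.

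To compute these heights I would use Lemma~\ref{Lem real part of endpt}. Labelling the edges at $v$ as $e_1,\dots,e_n$, with lifts the vertical geodesics with real feet $u_1,\dots,u_{n+1}$ (and $u_1$ identified with $u_{n+1}$ by the parabolic attached to $v$), the foot of the perpendicular from $u_{k+1}$ to the vertical line through $u_k$ sits at height $u_{k+1}-u_k$, so the base point of the triangle with ideal vertices $u_k,u_{k+1},\infty$ has height $(u_2-u_1)\exp\bigl(\sum_{j=1}^{k-1}x_{j+1}\bigr)$, where the $x_j$ are the shear parameters of $e_j$, i.e.\ the coordinates $x_{e_j}(L\setminus L_a,\epsilon)$ produced by $\Psi_x\circ I_a(L)$. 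Comparing with Definition~\ref{Def deco para}, the length of the horocycle around $v$ through this base point is precisely the quantity $r_k$ (after the evident cyclic reindexing). Thus the assertion becomes: the triangle lying to the right of $\widetilde e_1$ is one realizing $\max_k\exp\bigl(\sum_{j=1}^{k-1}x_{j+1}\bigr)$, equivalently $\min_k r_k$, and then Proposition~\ref{Prop origin of deco para} gives that $\alpha_1$ is the $d_v=0$ decoration curve.

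The remaining, and main, point is to extract this extremal index from the configuration of Figure~\ref{Fig compati of para 2}. Since no curve of $L$ crosses $e$ negatively (the opposite case being the mirror image), the general curves of $L$ that enter the fan of triangles around $v$ all wind around $v$ in the same sense, so the partial sum $\sum_{j=1}^{k-1}x_{j+1}$ is the signed weighted count of curves of $L\setminus L_a$ separating $\widetilde e$ from the $k$-th edge at $v$ in that direction; such a count is monotone up to, and again monotone after, the edge $\widetilde e_1$ through which the general curve $\delta_1$ singled out in Figure~\ref{Fig compati of para 2} leaves the fan. Hence the maximum of $\exp\bigl(\sum_{j=1}^{k-1}x_{j+1}\bigr)$ is attained at the triangle to the right of $\widetilde e_1$, $b_1$ is the highest base point at $v$, and $\alpha_1$ is the decoration curve with $d_v=0$. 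I expect the genuine obstacle to be the orientation bookkeeping: one must verify that the conventions for "intersecting positively'', for "the triangle on the right side of $\widetilde e_1$'', and for the sign in Lemma~\ref{Lem real part of endpt} are mutually consistent, so that the curve $\delta_1$ of Figure~\ref{Fig compati of para 2} is exactly the one at which the partial sums turn around; once this is checked, the lemma follows.
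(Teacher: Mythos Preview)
Your overall strategy coincides with the paper's: reduce the statement, via Proposition~\ref{Prop origin of deco para} and the observation that $l_v=0$ for $\mathcal{A}$-laminations, to the claim that among all base points of triangles at $v$ the point $b_1$ has the greatest Euclidean height in the chosen lift. That reduction is correct and is exactly what the paper does.

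The gap is in your justification of the extremality. The assertion that ``the general curves of $L$ that enter the fan of triangles around $v$ all wind around $v$ in the same sense'' only follows from the hypothesis for curves that actually cross $e$; general curves of $L$ that meet the fan around $v$ without meeting $e$ are not constrained by the assumption ``no curve crosses $e$ negatively'', and they do contribute to the shear parameters $x_{e_j}$. Consequently the partial sums $\sum_{j=1}^{k-1}x_{j+1}$ need not be monotone on either side of $\widetilde e_1$, and they are not literally a ``count of curves separating $\widetilde e$ from the $k$-th edge''. What survives is only the weaker statement that the maximum is attained at $\widetilde e_1$, and for that one needs a different argument.

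The paper's argument is a cancellation argument rather than a monotonicity one. The key point is that every curve of $L$ is disjoint from $\delta_1$. Hence any curve $\gamma$ contributing a negative factor at some edge on the far side of $\widetilde e_1$ must also contribute its matching positive factor on that same side (otherwise $\gamma$ would have to cross $\delta_1$); symmetrically, any positive factor arising on the $\widetilde e$-side of $\widetilde e_1$ from a curve other than the $\delta_k$ is cancelled by a negative factor on that same side. Thus, in the comparison of $b_1$ with any other base point, the only uncancelled contributions are the positive turns of the $\delta_k$, and these all push the height up toward $b_1$. Replace your monotonicity sentence with this pairing-by-curve cancellation and the proof goes through; the ``orientation bookkeeping'' you flag is then genuinely the only thing left to check.
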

\begin{proof}
If there exist a curve $\gamma$ of $L$ and an edge $e_-$ such that they are intersecting negatively and the real part of $\tilde{e}_-$ is lower than the real part of $\tilde{e}_1$, there must be an edge $e_+$ which intersects with $\gamma$ positively and the real part of $\tilde{e}_+$ is not larger than the real part of $\tilde{e}_1$. 
That is to say, the negative factor of the shear parameter of an edge $\tilde{e}_-$ on the left side of $\tilde{e}_1$ must be canceled by the positive factor of the shear parameter of $\tilde{e}_1$ or the edge $\tilde{e}_+$ on the left side of $\tilde{e}_1$. 
It may occur that the positive factor of the shear parameter of an edge on the left side of $\tilde{e}_1$ is not canceled, when $v$ is spike vertex. 
Similarly, the positive factor of the shear parameter of an edge on the right side of $\tilde{e}_1$ must be canceled by the negative factor of the shear parameter of $\tilde{e}_1$ or an edge on the right side of $\tilde{e}_1$. 
\par
\begin{figure}[htbp]
\begin{center}
\includegraphics[width=12cm]{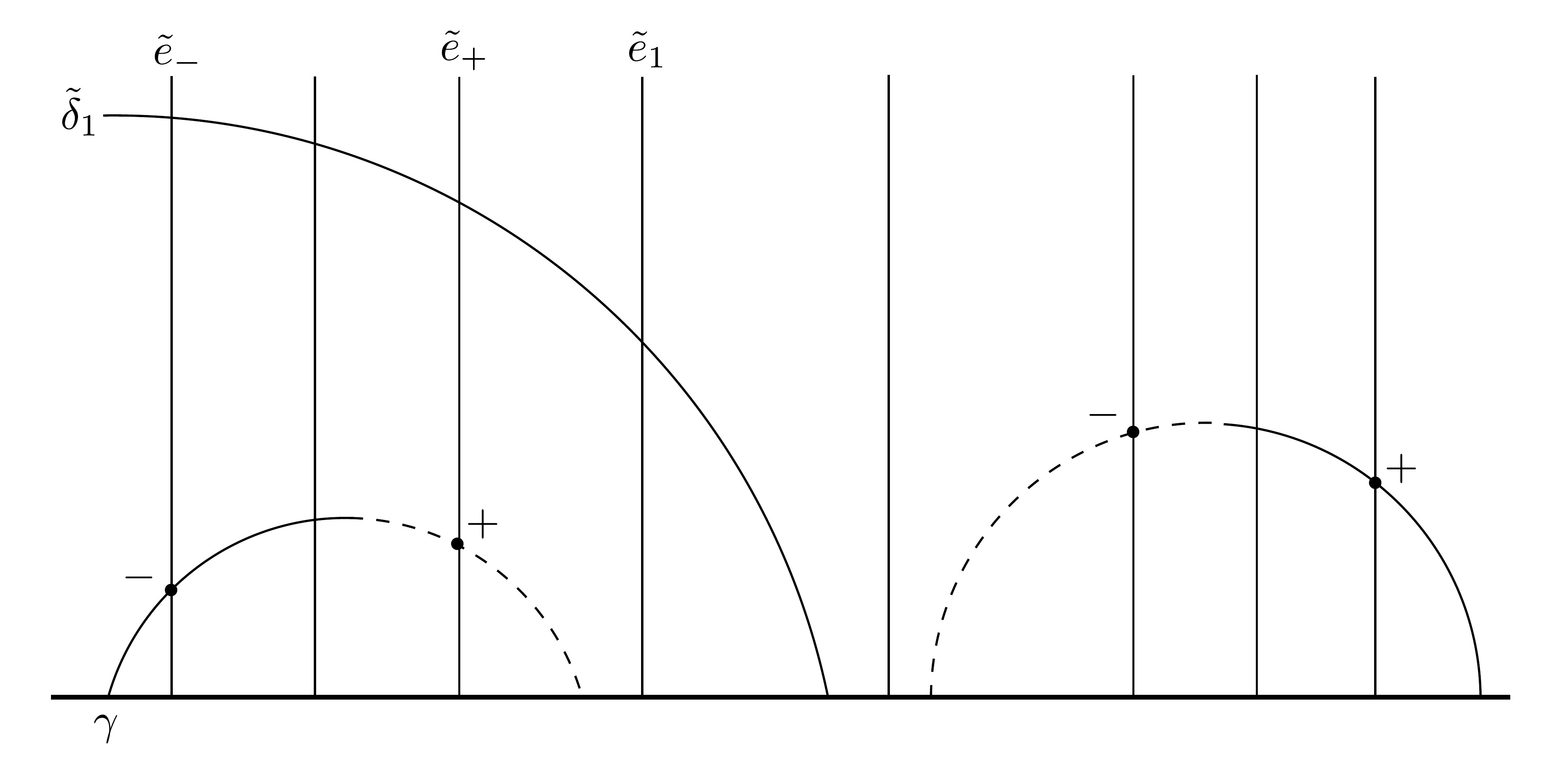}
\caption{Factors of shear parameters which decrease the imaginary part of $b_1$ are canceled. }
\label{Fig deco pass b1}
\end{center}
\end{figure}
These considerations imply that the imaginary part of $b_1$ is larger than the imaginary parts of other base points. 
By the definition of the origin of decoration parameters, the decoration curve around $v$ passes through $b_1$ when the decoration parameter of $v$ is $0$. 
\end{proof}
\begin{lem}\label{Lem len from b to alpha}(The signed length from $b$ to $\tilde{\alpha}_1$)
The signed length from $b$ to $\tilde{\alpha}_1$ is 
$$\sum_{k=1}^mw_{\delta_k}, $$
when $w_\gamma$ denotes the weight of $\gamma$ for a curve $\gamma$ of $L$. 
\end{lem}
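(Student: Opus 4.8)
The plan is to work in the upper half plane with the lift $\tilde v$ placed at $\infty$. Then every edge at $v$ is a vertical half-line meeting $\mathbb{R}$ at one of the points $u_k$ of Lemma~\ref{Lem real part of endpt}, the curve $\tilde\alpha_1$ is a horizontal line, and the base point of the edge with real endpoint $u_k$ taken with respect to the adjacent triangle whose third vertex is $u_{k\pm1}$ lies at imaginary part $|u_k-u_{k\pm1}|$. Re-indexing the edges at $v$ in their counter-clockwise order so that $\tilde e=g_0$ and $\tilde e_1=g_l$, and writing $u_j$ for the real endpoint of $g_j$, we get $\mathrm{Im}(b)=|u_1-u_0|$ and $\mathrm{Im}(b_1)=|u_{l+1}-u_l|$. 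Since $\tilde e$ is vertical and $\tilde\alpha_1$ passes horizontally through $b_1$, the intersection point $\tilde e\cap\tilde\alpha_1$ has imaginary part $\mathrm{Im}(b_1)$, so the signed length from $b$ to $\tilde\alpha_1$ along $\tilde e$ equals $\pm\log\bigl(\mathrm{Im}(b_1)/\mathrm{Im}(b)\bigr)$; by Lemma~\ref{Lem deco pass b1} the point $b_1$ is the highest base point at $v$, so $\tilde\alpha_1$ lies above $b$ and the sign is $+$.

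Next I would rewrite this in shear parameters. Equation~(\ref{Eq real part of endpt}) gives the recursion $u_{j+1}-u_j=(u_j-u_{j-1})\exp(x_{g_j})$, where $x_{g_j}$ is the shear parameter of the general-curve part $L\setminus L_a$ along $g_j$; telescoping from $j=1$ yields $|u_{l+1}-u_l|=|u_1-u_0|\exp\bigl(\sum_{j=1}^{l}x_{g_j}\bigr)$, so the signed length from $b$ to $\tilde\alpha_1$ equals $\sum_{j=1}^{l}x_{g_j}$, and it remains to show $\sum_{j=1}^{l}x_{g_j}=\sum_{k=1}^{m}w_{\delta_k}$.

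For the last step I would expand each $x_{g_j}$ as the $w$-weighted signed count of curves of $L\setminus L_a$ crossing $g_j$ and sum over $1\le j\le l$. Each component of a general curve of $L$ meeting the star of $v$ runs through a consecutive block of the triangles there, hence crosses a consecutive block of the edges at $v$, contributing $\pm w$ only at the two ends of that block and $0$ in between; in particular a component not crossing $e=g_0$ contributes $0$ to $\sum_{j=1}^{l}x_{g_j}$, since either both ends of its block fall inside $\{g_1,\dots,g_l\}$ and cancel, or the block runs past an end of this range --- and it is precisely the cancellation from the proof of Lemma~\ref{Lem deco pass b1} (that no factor of the shear parameters lowers $\mathrm{Im}(b_1)$) that forbids a net contribution reaching past $\tilde e_1=g_l$. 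The curves $\gamma_i$ that cross $e$ positively, and the curves $\delta'_k$ that cut the corner at $v'$ rather than $v$, likewise have no end of their block strictly inside $\{g_1,\dots,g_l\}$, and the $\mathcal{A}$-curves $\delta,\delta'$ have been deleted in $L\setminus L_a$. Thus only $\delta_1,\dots,\delta_m$ contribute: each $\delta_k$ cuts the corner at $v$, so it does not affect $x_e$, and turns back at its edge $\tilde e_k$, which by the labelling of Figure~\ref{Fig compati of para 2} lies among $g_1,\dots,g_l$; hence $\delta_k$ contributes exactly $w_{\delta_k}$, and summing gives the required identity.

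I expect the main obstacle to be this last combinatorial step: fixing the sign conventions for positive, negative and neutral crossings against the quadrilaterals around the $g_j$, checking that each curve-component's block contributes $\pm w$ only at its two ends, and --- crucially --- verifying that $\tilde e_1=g_l$ is placed so that every $\delta_k$ turns back within $g_1,\dots,g_l$ while no other curve contributes net to the sum. The geometric input for this is already in the proof of Lemma~\ref{Lem deco pass b1}; the remaining work is to package it as the telescoping identity above.
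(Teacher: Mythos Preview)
Your proposal is correct and follows essentially the same route as the paper: both arguments reduce the signed length from $b$ to $\tilde{\alpha}_1$ to a telescoping sum of shear contributions along the edges between $\tilde{e}$ and $\tilde{e}_1$, and then use the cancellation mechanism from the proof of Lemma~\ref{Lem deco pass b1} to show that every curve other than $\delta_1,\dots,\delta_m$ contributes zero net. Your version is somewhat more explicit---you write the signed length as $\sum_{j=1}^{l}x_{g_j}$ via the recursion of Lemma~\ref{Lem real part of endpt} before running the combinatorics---whereas the paper speaks directly of ``factors of shear parameters'' being canceled, but the substance is identical.
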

\begin{proof}
By Lemma \ref{Lem deco pass b1}, the negative factor of the shear parameter of an edge on the left side of $\tilde{e}_1$ must be canceled by the positive factor of the shear parameter of $\tilde{e}_1$ or an edge on the left side of $\tilde{e}_1$. 
If there exist a curve $\gamma$ of $L$ other than $\delta_1, \cdots, \delta_m$ and an edge $e_+$ such that they are intersecting positively and the real part of $\tilde{e}_+$ is not larger than the real part of $\tilde{e}_k$, there must be an edge $e_-$ which intersects with $\gamma$ negatively and the real part of $\tilde{e}_-$ is larger than the real part of $\tilde{e}_{k+1}$, where $\tilde{e}_{m+1}$ is defined as $e$. 
That is to say, the positive factor of the shear parameter of $\tilde{e}_k$ or an edge $\tilde{e}_+$ on the left side of $\tilde{e}_k$ obtained from $\gamma$ must be canceled by the negative factor of the shear parameter of the edge $\tilde{e}_-$ on the right side of $\tilde{e}_{k+1}$. 
\par
\begin{figure}[htbp]
\begin{center}
\includegraphics[width=12cm]{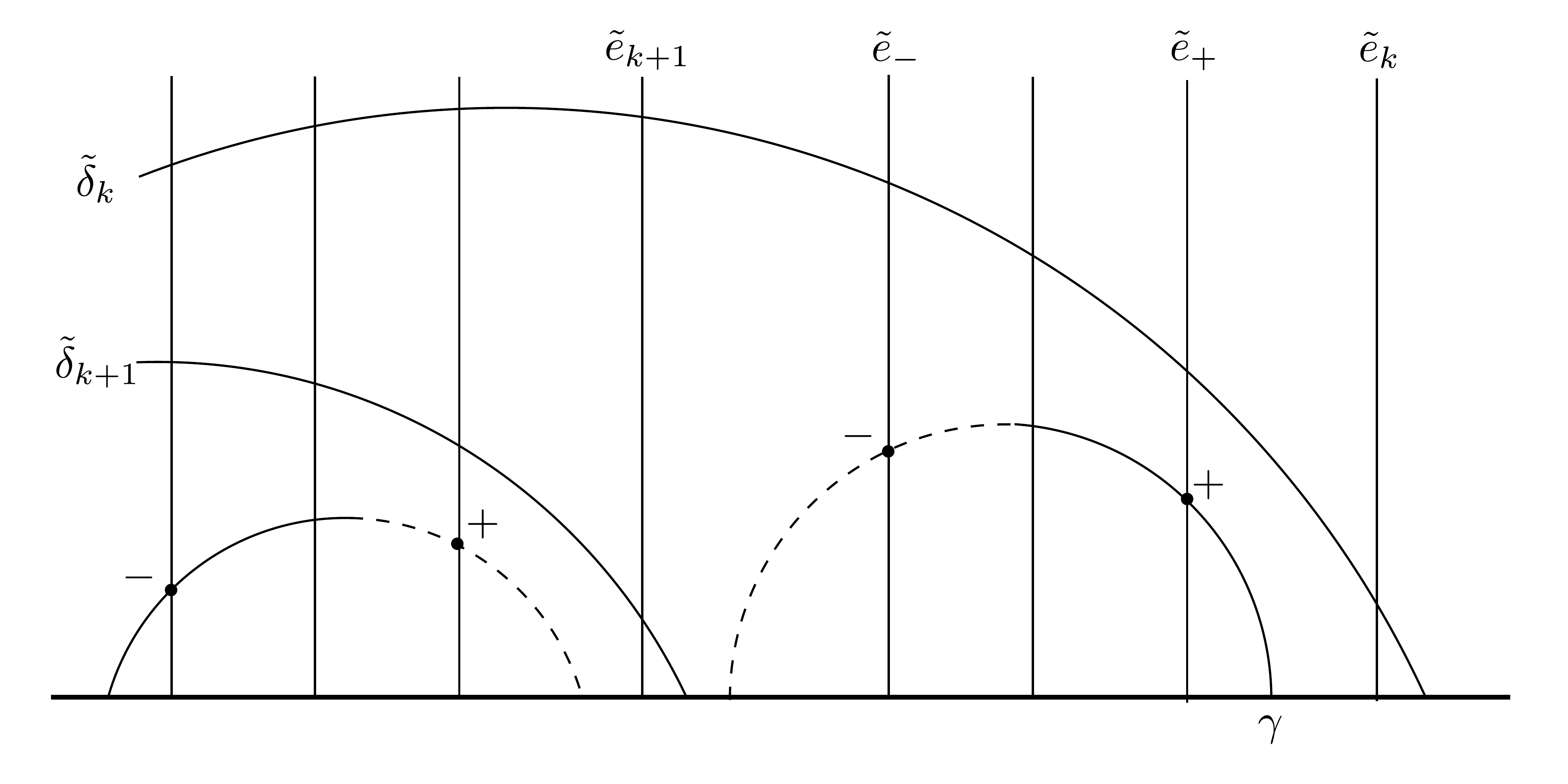}
\caption{Factors of shear parameters which are obtained from curves other than $\delta_1, \cdots, \delta_m$ are canceled. }
\label{Fig len from b to alpha}
\end{center}
\end{figure}
These considerations imply that factors of shear parameters which are not canceled are positive factors obtained from $\delta_1, \cdots, \delta_m$. 
Then, assigned statement follows. 
\end{proof}
Consider the lifting to the upper half plane $\mathbb{H}^2$ such that a lift $\tilde{v}'$ of $v'$ is $\infty$, and we have results about $v'$ similar to Lemma \ref{Lem deco pass b1} and Lemma \ref{Lem len from b to alpha}. 
\par
\proof{(Theorem \ref{Thm compati of para})}
$\psi\circ\Psi_x\circ I_a(L)$ is the tuple of the $\lambda$-lengths and the signed boundary lengths of the decorated enhanced hyperbolic surface whose shear-decoration coordinates are $\Psi_x\circ I_a(L)$. 
By Lemma \ref{Lem deco pass b1} and Lemma \ref{Lem len from b to alpha}, 
\begin{eqnarray*}
p_e\circ\psi\circ\Psi_x\circ I_a(L)\!\!\!&=\!\!\!&\pi_v\circ\Psi_x\circ I_a(L)+\sum_{k=1}^mw_{\delta_k}+\pi_{v'}\circ\Psi_x\circ I_a(L)+\sum_{k=1}^{m'}w_{\delta'_k}+\pi_e\circ\Psi_x\circ I_a(L)\\
&=\!\!\!&w_\delta+\sum_{k=1}^mw_{\delta_k}+w_{\delta'}+\sum_{k=1}^{m'}w_{\delta'_k}+\sum_{k=1}^nw_{\gamma_k}\\
&=\!\!\!&p_e\circ\iota_a\circ\Phi_a(L), 
\end{eqnarray*}
where $\pi_e$, $\pi_v$ and $\pi_{v'}$ are the projection maps from $\mathbb{R}^{E^i}\times\mathbb{R}^V$ to $\mathbb{R}$ corresponding to the subscripts, and $p_e$ is the projection map from $\mathbb{R}^E\times\mathbb{R}^{V^i}$ to $\mathbb{R}$ corresponding to the subscript. 
By Proposition \ref{Prop ori and enha}, 
\begin{eqnarray*}
p_v\circ\psi\circ\Psi_x\circ I_a(L)\!\!\!&=\!\!\!&0\\
&=\!\!\!&p_v\circ\iota_a\circ\Phi_a(L), 
\end{eqnarray*}
where $p_v$ is the projection map from $\mathbb{R}^E\times\mathbb{R}^{V^i}$ to $\mathbb{R}$ corresponding to the subscript. 
Therefore, Theorem \ref{Thm compati of para} follows. 
\endproof

\section*{Acknowledgements}
\noindent
The author would like to express the deepest appreciation to Hideki Miyachi for educational guiding from a basic level. 
The author would also like to express his gratitude to Ken'ichi Ohshika and Shinpei Baba for their helpful advices and encouragements.

\end{document}